\pgfplotsset{compat=1.12}
\numberwithin{equation}{section}
\crefname{section}{section}{sections}
\crefname{subsection}{subsection}{subsections}
\Crefname{section}{Section}{Sections}
\Crefname{subsection}{Subsection}{Subsections}
\crefname{algo}{Algorithm}{Algorithms}
\crefname{table}{Table}{Tables}
\newtheorem{theorem}{Theorem}[section]
\newtheorem{definition}[theorem]{Definition}
\newtheorem{proposition}[theorem]{Proposition}
\newtheorem{lemma}[theorem]{Lemma}
\newtheorem{remark}[theorem]{Remark}
\newcommand{\divergence}{\operatorname{div}} 
\newcommand{\supp}{\operatorname{supp}}
\newcommand{\spann}{\operatorname{span}}   
\newcommand{\dist}{\operatorname{dist}}
\newcommand{\diam}{\operatorname{diam}}
\newcommand{\proj}{\operatorname{proj}}
\newcommand{\la}{\langle} 
\newcommand{\ra}{\rangle}
\newcommand{\lb}{\lbrace} 
\newcommand{\rb}{\rbrace}
\newcommand{\R}{\mathbb{R}} 
\newcommand{\N}{\mathbb{N}}
\newcommand{\HH}{\mathcal{H}} 
\newcommand{\BB}{\mathcal{B}} 
\newcommand{\longharpoonup}{\overset{l\rightarrow\infty}{\relbar\hspace{-2pt}\relbar\hspace{-2pt}\relbar\hspace{-2pt}\rightharpoonup}}
\newcommand{\lenergy}{\left\vert\mspace{-1.1mu}\left\vert\mspace{-1.1mu}\left\vert}
\newcommand{\renergy}{\right\vert\mspace{-1.1mu}\right\vert\mspace{-1.1mu}\right\vert}
\newcommand{\energy}{\vert\mspace{-1.1mu}\vert\mspace{-1.1mu}\vert}
\newcommand{\epsalgo}{\varepsilon_{\mathrm{algofail}}}
\newcommand{\epsfail}{\varepsilon_{\mathrm{fail}}}
\newcommand{\tol}{\normalfont{\texttt{tol}}}
\newcommand{\tolGFEM}{\tol_\mathrm{GFEM}}
\title{Optimal local approximation spaces for parabolic problems}
\author{Julia Schleu{\ss}}
\address{Institute for Computational and Applied Mathematics, University of M\"unster, Einsteinstr. 62, 48149 M\"unster, Germany, julia.schleuss@uni-muenster.de.}
\author{Kathrin Smetana}
\address{Department of Mathematical Sciences, Stevens Institute of Technology, 1 Castle Point Terrace, Hoboken, NJ 07030, United States of America, ksmetana@stevens.edu.}
\date{\today}
\thanks{The work of Julia Schleu{\ss} was funded by the Deutsche Forschungsgemeinschaft (DFG, German Research Foundation) under Germany's Excellence Strategy EXC 2044-390685587, Mathematics M\"unster: Dynamics-Geometry-Structure.}
\subjclass[2010]{65M12, 65M15, 65M55, 65M60}
\keywords{multiscale methods, parabolic problems, space-time Petrov-Galerkin methods,
Kolmogorov n-width, a priori error bound, generalized finite element method}
\begin{document}
	
	\begin{abstract}
	We propose local space-time approximation spaces for parabolic problems that are optimal in the sense of Kolmogorov and may be employed in multiscale and domain decomposition methods. The diffusion coefficient can be arbitrarily rough in space and time. To construct local approximation spaces we consider a compact transfer operator that acts on the space of local solutions and covers the full time dimension. The optimal local spaces are then given by the left singular vectors of the transfer operator. To prove compactness of the latter we combine a suitable parabolic Caccioppoli inequality with the compactness theorem of Aubin-Lions. In contrast to the elliptic setting [I. Babu\v{s}ka and R. Lipton, \textit{Multiscale Model. Simul.}, 9 (2011), pp. 373-406] we need an additional regularity result to combine the two results. Furthermore, we employ the generalized finite element method to couple local spaces and construct an approximation of the global solution. Since our approach yields reduced space-time bases, the computation of the global approximation does not require a time stepping method and is thus computationally efficient. Moreover, we derive rigorous local and global a priori error bounds. In detail, we bound the global approximation error in a graph norm by the local errors in the $L^2(H^1)$-norm, noting that the space the transfer operator maps to is equipped with this norm. Numerical experiments demonstrate an exponential decay of the singular values of the transfer operator and the local and global approximation errors for problems with high contrast or multiscale structure regarding space and time.
	\end{abstract}

	\maketitle


\section{Introduction}

In certain industrial applications and environmental sciences, modeling and simulating phenomena such as the transport of pollutants in the groundwater is of great interest. These problems exhibit highly varying and heterogeneous multiscale features since both local fine-scale effects such as capillary pressures and the coarse-scale flow behavior have a significant influence on the overall concentration and distribution of the pollutant. In addition, the respective coefficient functions can be rough in space and time. Therefore, a numerical simulation using standard techniques such as the finite element (FE) or the finite volume method can be prohibitively expensive. In fact, the FE method based on classical polynomials can perform arbitrarily bad for heterogeneous problems with rough coefficients \cite{BO00}. Well-known strategies to address these problems are domain decomposition \cite{G06,HK18,KW02,SD14} and multiscale methods \cite{BL11,WE03,EG13,FH98,GG12,HW97,HS96,MP14,MZ07,OZ11,OZ17,OZ14}. The latter are based on ansatz functions which incorporate the local behavior of the (numerical) solution of the PDE.

Localizable multiscale methods that allow controlling both the error due to localization and the global approximation error at a certain rate, but do not rely on structural assumptions such as periodicity or scale separation, have been developed in the last decade. So far, such type of multiscale methods have been proposed for problems with coefficients that are rough in space and include optimal local approximation spaces \cite{BL11,CE18,EG13}, localized bases based on the flux transfer property \cite{OZ11}, adaptive local finite elements (AL basis) \cite{GG12}, the local orthogonal decomposition \cite{MP18,MP14}, rough polyharmonic splines \cite{OZ14}, and gamblets \cite{OZ17}.

In this paper, we propose optimal local approximation spaces, and, to the best of our knowledge, for the first time such type of multiscale methods for parabolic problems with coefficients that are rough in both space and time. We conjecture that the concepts developed in this paper and the corresponding numerical analysis are relevant for all these methods to address coefficients that are rough in space and time.

To construct ansatz functions that incorporate the local behavior of the global solution, we consider the space of all local solutions of the PDE on a target subdomain. To localize the computations, we introduce a strictly larger oversampling subdomain and then restrict the local solutions to the target subdomain. The key observation motivating our approach is the very rapid, exponential decay of the solutions from the boundary of the oversampling domain to the inner target domain which reveals that the solution space of the PDE is locally low-dimensional. To detect the functions that still persist on the target subdomain and are thus relevant for approximation, we use a transfer operator that maps boundary data in space and time on the boundary of the oversampling domain to the respective solution on the target subdomain over the whole time interval. Compactness of the transfer operator facilitates its singular value decomposition via the Hilbert-Schmidt theorem \cite[Theorem 8.94]{RR04} and thus enables the approximation of its range using only few modes if the singular values decay fast. As the exponential decay of the solutions in the interior yields a fast decay of energy, the singular values indeed decay fast. Spanning the local space by the leading left singular vectors of the transfer operator results in an approximation space that is optimal in the sense of Kolmogorov \cite{K36} and hence minimizes the approximation error among all spaces of the same dimension. To prove compactness of the transfer operator, we combine a parabolic Caccioppoli inequality with the compact embedding of suitable Sobolev spaces in $L^2(L^2)$. The Caccioppoli inequality represents the exponential decay behavior of higher frequencies in an analytic fashion and allows to bound the $L^2(H^1)$-norm of local solutions on the target subdomain in terms of their $L^2(L^2)$-norm on the oversampling subdomain. It can therefore be seen as an inverse Poincaré inequality. In contrast to the elliptic setting \cite{BL11} the regularities do not match a priori and we therefore exploit an additional regularity result for the weak solutions to combine the two results.

To construct an approximation of the global solution, we employ the generalized finite element method (GFEM) \cite{BC94,BM96} as one example for coupling the local spaces since it allows to bound the global approximation error in terms of the local error contributions. In contrast to existing approaches \cite{GS00} we propose, to the best of our knowledge, for the first time a space-time GFEM based on local space-time ansatz functions, as for certain problems a reduction only with respect to the spatial variable can become expensive if the time discretization involves many time steps. Such problems comprise, for instance, multiscale diffusion coefficients that are varying non-periodically in time. In those cases either the reduced spatial bases would become very large since snapshots for many time points have to be included or one would have to use an adaptive-in-time procedure based on smaller time intervals. The computation of the global approximation for the space-time GFEM we propose here does not require a time stepping method. As several numerical experiments demonstrate a very rapid and exponential decay of the approximation errors for an increasing number of basis functions, the solution of the global system is computationally very efficient.

As one key contribution of this paper we prove a rigorous a priori error bound for the local approximation error in the $L^2(H^1)$-seminorm. We highlight that the proofs in the elliptic setting crucially rely on the fact that the solution of the PDE also minimizes an energy functional, which is not true for the solution of the parabolic PDE. We show that as a consequence additional data terms have to be included in the a priori error bound and that the bound generally does not hold without these extra terms. Moreover, as one major contribution of this paper, we prove that the global approximation error in a suitable graph norm can be controlled only by the local errors in the $L^2(H^1)$-seminorm. The key argument to additionally control the time derivative of the global error in a certain reduced dual norm is a (Petrov-)Galerkin orthogonality of the approximation error and the reduced test space. Exploiting the global a priori error result we propose an adaptive algorithm for the localized construction of the local ansatz spaces such that the global GFEM approximation satisfies a prescribed global error tolerance. Finally, as another contribution of this paper, we also show how to deal with non-homogeneous boundary conditions.

Localizable multiscale methods for parabolic problems with coefficients that are rough in space can, for instance, be found in \cite{CE18,MP18,OZ11,OZ17,OZ14}. 

The local orthogonal decomposition (LOD) has been introduced in \cite{MP14} for elliptic multiscale problems and generalized to parabolic multiscale problems with highly varying spatial diffusion coefficients in \cite{MP18}. The key idea of the LOD is to express the space $H^1_0$ as a direct sum of a fine-scale space, which is the kernel of an $H^1$-stable interpolation operator on a coarse mesh, and a multiscale space that is defined as the difference of the coarse finite element space and its orthogonal projection onto the fine-scale space. In this way, the decomposition is orthogonal with respect to the energy inner product. Exploiting the Caccioppoli inequality an exponential decay of the basis functions is shown \cite{MP14} and consequently the ansatz functions can be approximated on local subdomains. In the parabolic setting the multiscale basis functions are combined with a backward Euler time stepping scheme.

In \cite{OZ14} rough polyharmonic splines are introduced as the solutions of constrained minimization problems that have a built-in decay behavior. This justifies their approximation by localized interpolation functions that are computed on local subdomains. The resulting approximation error is bounded using a certain Caccioppoli inequality. Concerning parabolic problems an implicit time discretization is proposed.

A probabilistic methodology motivated by game theory is introduced in \cite{O17}. The so-called gamblets are locally computed in a hierarchic fine-to-coarse fashion, decay exponentially, and induce an orthogonal multiresolution decomposition of the solution space. The approach is generalized to parabolic (and hyperbolic) problems in \cite{OZ17}, where an implicit Euler time discretization is used. 

In \cite{OZ11} localized bases for (elliptic, hyperbolic, and) parabolic problems with spatial $L^\infty$-diffusion coefficients are introduced. The approach is based on the flux transfer property and compactness properties due to source terms of sufficient regularity. In particular, local spatial approximation spaces are constructed by solving elliptic PDEs on local subdomains and an implicit time discretization is used.

Similar to our approach, the methods discussed above are based on compactness properties of certain operators and exploit Caccioppoli-type inequalities. We emphasize that, in contrast to existing approaches, our approach is not restricted to parabolic problems with diffusion coefficients that only vary rapidly in space, but is able to deal with coefficients that are arbitrarily rough in both space and time. Furthermore, we do not construct reduced spaces only with respect to the spatial variable and consequently no time stepping procedure is required to compute the global approximation.

In \cite{BH14,BL11} optimal local approximation spaces for elliptic PDEs with rough coefficient functions are introduced via a compact restriction operator that acts on the space of local solutions. The local spaces are then coupled using the GFEM \cite{BC94,BM96}. Furthermore, optimal interface spaces for (parametrized) elliptic problems are introduced in \cite{SP16}, generalized to geometry changes in \cite{S20}, and also proposed in \cite{CH20}. Concerning (real-world) applications the optimal local approximation spaces are employed, for instance, for the construction of digital twins \cite{KK20,KK18} and in the context of data assimilation \cite{TP18}. Other options for approximating the optimal local reduced spaces besides the random sampling technique \cite{BS18} employed here, are proposed in \cite{BL20,KL20}.

Finally, there has recently been a growing number of contributions concerning localized model order reduction for parametrized problems \cite{AH12,EP13,HK13,IQ12,MR02,MR04,MR15}. Local ansatz spaces are generated either via snapshots that are precomputed on local reference domains and reused for geometrically similar subdomains \cite{EP13,HK13,IQ12,MR02,MR04}, a combination of greedy-type reduced basis (RB) approximations and liftings of (eigenfunction or snapshot) interface modes \cite{EP13,HK13,MR15}, or greedy RB approximations with a principal component analysis compression \cite{AH12}. We refer to \cite{BI19} for an overview on localized model order reduction procedures for parametrized problems. In the time-dependent setting localized approaches addressing flow simulations can, for instance, be found in \cite{FI18,GG16,KF15}. Here, the local ansatz spaces are build using proper orthogonal decomposition \cite{FI18,GG16}, discrete empirical interpolation \cite{GG16}, or greedy-type RB approximations \cite{KF15}.

The remainder of this paper is organized as follows. In \cref{model_problem} we introduce the parabolic model problem. Subsequently, the main contributions of this paper are developed in \cref{local_appr_spaces,global_approximation}. We propose optimal local approximation spaces in \cref{local_appr_spaces} and discuss their computational realization in \cref{impl_and_random}. Moreover, in \cref{global_approximation} we address the construction of a global approximation via GFEM and provide local and global a priori error bounds. Finally, we present numerical experiments in \cref{numerical_experiments} to demonstrate the approximation properties of our local and global reduced spaces and draw some conclusions in \cref{conclusions}.


\section{Model problem: the linear heat equation}
\label{model_problem}

In this section we introduce the linear heat equation as a representative model problem
for parabolic problems. To that end, let $\Omega \subseteq \R^n$ denote a large, bounded Lipschitz domain of dimension $n\in\lb1,2,3\rb$ with $\partial \Omega = \Sigma_D \cup \Sigma_N$, where $\Sigma_D$ denotes the Dirichlet and $\Sigma_N$ the Neumann boundary, respectively. Furthermore, let $I=(0,T)\subseteq \R$ denote a time interval for an arbitrary $0<T<\infty$. We consider the following initial boundary value problem for the linear heat equation: Find the temperature $u: I\times\Omega \rightarrow\R$ such that
\begin{align} \label{heateq}
	\begin{array}{rlll}
		u_t(t,x)-\divergence(\alpha(t,x)\nabla u(t,x))&=&f(t,x) &\text{for every }(t,x) \in I\times\Omega,\\
		u(t,x)&=&g_D(t,x)&\text{for every }(t,x) \in I \times \Sigma_D,\\
		\alpha\nabla u(t,x) \cdot n(x)&=&g_N(t,x)&\text{for every }(t,x) \in I \times \Sigma_N,\\
		u(0,x)&=&u_0(x) &\text{for every }x \in \Omega.
	\end{array}
\end{align}
Here, $\alpha \in L^\infty(I\times\Omega)^{n \times n}$ denotes the heat conductivity coefficient that satisfies $\alpha_0(t,x)\,\vert v \vert^2 \leq v^\text{T} \alpha(t,x) \,v \leq \alpha_1(t,x)\, \vert v \vert^2$ for every $v\in\R^{n}$ and $0<\alpha_0<\alpha_0(t,x)<\alpha_1(t,x)<\alpha_1<\infty$ for almost every $(t,x)\in I\times\Omega$ and $\alpha_0,\alpha_1 \in \R $. Moreover, the function $f \in L^2(I,V^*)$ represents a heat source, $u_0\in L^2(\Omega)$ denotes the initial temperature, $g_D \in L^2(I,H^\frac{1}{2}(\Sigma_D))$ and $g_N \in L^2(I,H^{-\frac{1}{2}}(\Sigma_N))$ denote the Dirichlet and Neumann boundary data, and $n$ is the outer unit normal. The spatial test space is given by $V:=\lb w \in H^1(\Omega) \mid w = 0 \text{ on }\Sigma_D\rb$ and $V^*$ denotes its dual space, where $\Vert \cdot \Vert_V := \Vert\alpha^\frac{1}{2} \cdot \Vert_{L^2(\Omega)} +\Vert \alpha^\frac{1}{2}\nabla \cdot \Vert_{L^2(\Omega)}$. A corresponding weak formulation of \cref{heateq} then reads as follows: Find $u \in L^\infty(I,L^2(\Omega))\cap L^2(I,V_{\Sigma_D})$ such that
\begin{align}\label{global_formulation}
	\begin{split}
		&-\int_I (u(t),v)_{L^2(\Omega)}\,\varphi_t(t)\,dt\; + \int_I (\alpha \nabla u (t), \nabla v)_{L^2(\Omega)}\,\varphi(t)\,dt \\ 
		&= \int_I \la f(t), v\ra_V \,\varphi(t)\,dt+ \int_I \la g_N(t), v\ra_{H^\frac{1}{2}(\Sigma_N)}\,\varphi(t)\,dt\quad \forall\,v\in V,\;\varphi \in C_0^\infty(I)
	\end{split}
\end{align}
and it holds $u(0)=u_0$ in $L^2(\Omega)$, where $V_{\Sigma_D}:= \lb w \in H^1(\Omega) \mid w = g_D \text{ on } \Sigma_D \rb$.


\section{Optimal local approximation spaces}
\label{local_appr_spaces}
In this section we propose local space-time approximation spaces, which are optimal in the sense of Kolmogorov, for the linear heat equation with coefficients that are rough in space and time, extending the approach from \cite{BL11} for the elliptic setting. In \cref{motivation_local} we start with a motivation before we describe the construction of local ansatz spaces for local subdomains in the interior of the global domain in \cref{interior} and subdomains located at the global boundary in  \cref{boundary}.

\subsection{Motivation}
\label{motivation_local}

To tackle heterogeneous problems with rough coefficients, we propose localizable multiscale methods based on ansatz functions which incorporate the local behavior of the global solution of the PDE. To this end, we consider the space of all local solutions of the PDE with arbitrary Dirichlet boundary values on the boundary of the oversampling domain $\Omega^{out}$; see \cref{inner_domains} for an illustration. We showcase that the local solution space on the target subdomain $\Omega^{in}$ can be well approximated using only few functions via an example \cite{SP16}: Consider $-\Delta u = 0$ in $\Omega^{out}=(-2,2)\times(0,1)$ with homogeneous Neumann boundary conditions on $(-2,2)\times\lbrace 0,1\rbrace$ and arbitrary Dirichlet boundary conditions on $\lb-2,2\rb \times (0,1)$. Using separation of variables, we conclude that all solutions of this problem can be written as $u(x,y)=a_0 + b_0 x + \sum_{n=1}^\infty \cos(n\pi y) [a_n \cosh(n\pi x) + b_n \sinh(n\pi y)]$, where $a_n,b_n \in \R$ are determined by the Dirichlet boundary values for $n=0,\ldots,\infty$. We observe in \cref{figure_ex_decay} a very rapid decay of the higher frequencies of the solutions ($\cos(n\pi y)$ for higher $n$) from the boundary into the interior of the domain $\Omega^{out}$, which implies that the solution space of the PDE is locally low-dimensional. To detect the functions that still persist on $\Omega^{in}$ and are thus relevant for approximation purposes, we introduce a transfer operator $P$ whose range is the space of local solutions of the PDE on $\Omega^{in}$.

After discretization, say, with the FE method, the transfer operator can be represented by a matrix $\mathbf{P}$. It is then well-known that the range of this matrix can be optimally approximated by its $k$ leading left singular vectors and that the projection error satisfies $\Vert \mathbf{P} - \mathbf{U}_k \mathbf{U}_k^\top \mathbf{P}\Vert_2 = \sigma_{h,k+1}$ (Eckart-Young theorem e.g. in \cite{GV13}); here the columns of $\mathbf{U}_k$ contain the $k$ leading singular vectors and $\sigma_{h,k+1}$ denotes the $k+1$st singular value of $\mathbf{P}$. While the \emph{discrete} transfer operator is trivially compact because of its finite rank, in the \emph{continuous} setting we need to prove compactness of the transfer operator to facilitate its singular value decomposition via the Hilbert-Schmidt theorem  \cite[Theorem 8.94]{RR04}. Then, the space $\Lambda_k$ spanned by the $k$ leading left singular vectors is an optimal approximation space in the sense of Kolmogorov, meaning that it minimizes the approximation error among all linear spaces of dimension $k$. In addition, we have as in the discrete setting $\Vert P - \text{proj}_{\Lambda_k} P \Vert = \sigma_{k+1}$, where $\text{proj}_{\Lambda_k}$ denotes the orthogonal projection onto $\Lambda_k$ and $\sigma_{k+1}$ is the $k+1$st singular value of $P$. Thanks to the fast decay of the singular values, related to the exponential decay of the solutions in the interior, very few left singular vectors suffice for an accurate approximation.

\begin{figure}
	\begin{minipage}{0.49\textwidth}
		\centering 
		\includegraphics[height=1.8cm]{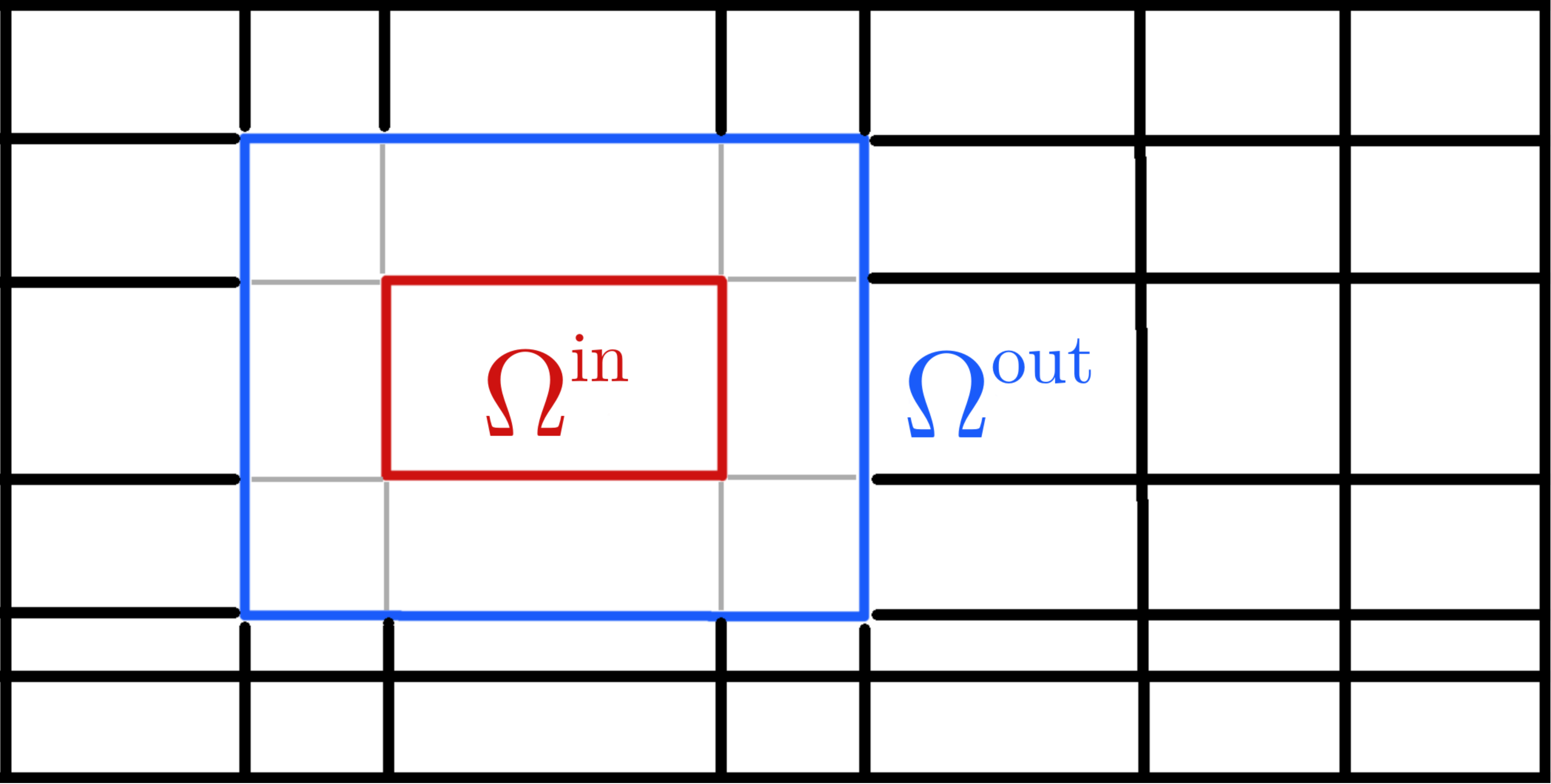}
		\captionsetup{width=0.95\linewidth}
		\captionof{figure}{\footnotesize Local target domain $\Omega^{in}$ and oversampling domain $\Omega^{out}$ in the interior of $\Omega$.}
		\label{inner_domains}
	\end{minipage}
	\hfill
	\begin{minipage}{0.49\textwidth}
		\centering
		\begin{tikzpicture}
		\begin{axis}[
		width=5.5cm,
		height=3.16cm,
		xmin=-2,
		xmax=2,
		ymin=-0.02,
		ymax=0.5,
		xtick = {-2,-1,0,1,2},
		ytick={0,0.2,0.4},
		legend style = {font=\scriptsize,at={(1.44,1.0)}},
		tick label style={font=\scriptsize}  
		]
		\addplot+[ mark = x, black, mark indices = {13,30,50,80,135,200,265,321,351,371,388}] table[x index=0, y index=1, forget plot] {motivation_decay.dat};
		\addlegendentry{$n=1$}
		\addlegendimage{mark = x}
		\addplot+[ mark = triangle,brown, mark indices = {13,30,50,80,135,200,265,321,351,371,388}] table[x index=0, y index=2, forget plot] {motivation_decay.dat};
		\addlegendentry{$n=2$}
		\addlegendimage{mark = triangle, brown}
		\addplot+[mark = diamond, blue, mark indices = {13,30,50,80,135,200,265,321,351,371,388}] table[x index=0, y index=3, forget plot] {motivation_decay.dat};
		\addlegendentry{$n=4$}
		\addlegendimage{mark = diamond, blue}
		\addplot+[mark=star, red, mark indices = {13,30,50,80,135,200,265,321,351,371,388}] table[x index=0, y index=4, forget plot] {motivation_decay.dat};
		\addlegendentry{$n=8$}
		\addlegendimage{mark = star, red}
		\end{axis}
		\end{tikzpicture}
		\captionsetup{width=0.99\linewidth}
		\vspace{-0.18cm}
		\captionof{figure}{\footnotesize Solution $u(x,2/3)$ for Dirichlet boundary conditions $-\cos(n \pi y)$ for $n=1,2,4,8$.}
		\label{figure_ex_decay}
	\end{minipage}
\end{figure}

The key ingredients to show compactness of the transfer operator are a parabolic Caccioppoli inequality and the compactness theorem of Aubin-Lions. The exponential decay of higher frequencies of the solutions from $I\times\partial\Omega^{out}$ to $I\times\Omega^{in}$ implies that the integral over the (spatial) gradient of a local solution on $I\times\Omega^{in}$ can be bounded in terms of the integral over the solution on $I\times\Omega^{out}$. This decay of energy in the interior of  $I\times\Omega^{out}$ is analytically captured by the Caccioppoli inequality that allows to bound the $L^2(I,H^1(\Omega^{in}))$-norm of local solutions in terms of their $L^2(I,L^2(\Omega^{out}))$-norm. Caccioppoli's inequality, closely linked to the exponential decay of the solutions in the interior, thus makes the local solution space amenable to approximation and facilitates the design of localizable multiscale methods. Moreover, the compactness theorem of Aubin-Lions is the parabolic analogon of the compact embedding of $H^1(\Omega^{out})$ in $L^2(\Omega^{out})$ in the stationary setting, and states that the space of functions in $L^2(I,H^1(\Omega^{out}))$ that have a time derivative in $L^2(I,H^{-1}(\Omega^{out}))$ is compactly embedded in $L^2(I,L^2(\Omega^{out}))$. In contrast to the elliptic setting \cite{BL11} the regularities do not match a priori and we therefore exploit an additional regularity result for the weak solutions to combine the two results.

Based on the approximation properties of the local ansatz space, we can furthermore show that the relative local approximation error is bounded by $\Vert P - \text{proj}_{\Lambda_k} P \Vert = \sigma_{k+1}$. Moreover, by employing a global coupling of the local ansatz spaces that allows to bound the global approximation error in terms of the local error contributions, we can achieve a global error that is decaying with the same rate. Therefore, our approach allows for local and global error control, and the local reduced spaces can be chosen such that a desired global error tolerance prescribed by the user is satisfied.

\subsection{Optimal local approximation spaces in the interior}
\label{interior}

Let $\Omega^{in}\subseteq\Omega^{out}$$\subseteq\Omega$ denote subdomains that are located in the interior of the computational domain satisfying $\dist(\partial\Omega^{out},\partial\Omega)>0$ and $\dist(\partial\Omega^{in},\partial\Omega^{out})>\delta>0$ as illustrated in \cref{inner_domains}. Since $\Omega^{in}$ and $\Omega^{out}$ lie in the interior of the spatial domain $\Omega$, we do not know the values of the global solution $u$ on $I\times\partial\Omega^{in}$ or $I\times\partial\Omega^{out}$. We only know that $u$ solves the linear heat equation locally in $I \times \Omega^{in}$ or $I \times\Omega^{out}$ with unknown Dirichlet boundary conditions on $I\times\partial\Omega^{in}$ or $I\times\partial\Omega^{out}$ as discussed in \cref{motivation_local}. As we do not want to make any assumptions about the geometry of the global domain $\Omega$ when constructing the local reduced models and want to choose the oversampling domain $\Omega^{out}$ as small as possible, we cannot make any assumptions on the values of $u$ on $I \times \partial \Omega^{out}$. Hence, we are interested in approximating all functions $ w\in L^\infty(I,L^2(\Omega^{in}))\cap L^2(I,H^1(\Omega^{in}))$ that satisfy $w(0)=u_0$ in $L^2(\Omega^{in})$ and solve
\begin{align}
	\begin{split}
		-\int_I (w(t),v)_{L^2(\Omega^{in})}\,&\varphi_t(t)\,dt + \int_I (\alpha\nabla w (t), \nabla v)_{L^2(\Omega^{in})}\,\varphi(t)\,dt \\
		&= \int_I \la f(t), v\ra_{H^1_0(\Omega^{in})}\,\varphi(t)\,dt \quad \forall \,v\in H^1_0(\Omega^{in}),\;\varphi \in C_0^\infty(I).
	\end{split}
	\tag{\text{$P^{in}$}}
\end{align}
The analogous problem on $I \times \Omega^{out}$ will be denoted by $(P^{out})$.

First, we address the case where $f=0$ and $u_0=0$ and discuss the general case at the end of this subsection. We consider the following spaces of functions:
\begin{eqnarray*}
	\HH^{in}&:=& \big\lb w\in L^\infty(I,L^2(\Omega^{in}))\cap L^2(I,H^1(\Omega^{in}))\, \big|\, w \text{ solves } (P^{in}) \text{ for } f=0,\;u_0=0 \big\rb,\nonumber\\
	\HH^{out}&:=& \big\lb w\in L^\infty(I,L^2(\Omega^{out}))\cap L^2(I,H^1(\Omega^{out}))\, \big|\, w \text{ solves } (P^{out}) \text{ for } f=0,\;u_0=0 \big\rb,\nonumber\\
	\BB^{out}&:=&\big\lb w\vert_{I\times\partial\Omega^{out}}\big| w \in \HH^{out}\big\rb = L^2(I,H^{1/2}(\partial\Omega^{out})).
\end{eqnarray*}

The trace theorem \cite[Theorem 2.1]{LM72} yields the existence of the traces in $\BB^{out}$.
We equip $\HH^{in}$ with the inner product $((u,v))_{in}:=\int_I \int_{\Omega^{in}} \alpha \nabla u \nabla v $ and the induced energy norm $\lenergy u \renergy_{in} := \Vert \alpha^\frac{1}{2}\nabla u \Vert_{L^2(I,L^2(\Omega^{in}))}$. Analogously, we equip $\HH^{out}$ with the energy norm $\Vert \alpha^\frac{1}{2}\nabla \cdot \Vert_{L^2(I,L^2(\Omega^{out}))}$. Furthermore, we equip $\BB^{out}$ with the inner product $((\mu,\nu))_{out}:=\int_I \int_{\Omega^{out}} \alpha \nabla H(\mu) \nabla H(\nu)$ and the induced energy norm $\lenergy \mu \renergy_{out} := \Vert \alpha^\frac{1}{2}\nabla H(\mu) \Vert_{L^2(I,L^2(\Omega^{out}))}$, where $H(\mu)\in \HH^{out}$ is the solution of $(P^{out})$ for $f=0$, $u_0=0$, and boundary condition $\mu\in \BB^{out}$.\footnote{Note that $\lenergy \cdot \renergy_{in}$ defines a norm on $\HH^{in}$ thanks to a Poincar\'e inequality for parabolic problems stated in \cref{par_poin_int}. Thanks to the trace inequality and \cref{par_poin_int} $\lenergy \cdot \renergy_{out}$ defines a norm on $\BB^{out}$.}

Since we are interested in approximating the space $\HH^{in}$, we next define a transfer operator $P:\BB^{out}\rightarrow \HH^{in}$, similar to \cite{BL11,SP16}, that is given by
\begin{align}
	P(w\vert_{I\times\partial\Omega^{out}}):=w\vert_{I \times \Omega^{in}}\quad\text{ for all } w \in \HH^{out} \text{ and thus }w\vert_{I\times\partial\Omega^{out}} \in \BB^{out}.
	\label{trans_op_int}
\end{align}

In order to approximate $\HH^{in}$ with the left singular vectors of $P$, we need to prove compactness of the latter; see \cref{compact_op_int}. To this end, we want to employ the compactness theorem of Aubin-Lions \cite[Corollary 5]{S86}, which states that the generalized Sobolev space $W^{1,2,2}(I,H^1(\Omega^{out}),H^{-1}(\Omega^{out})):= \lb u \in L^2(I,H^1(\Omega^{out})) \mid u_t \in L^2(I,H^{-1}(\Omega^{out})) \rb$ is compactly embedded in the space $L^2(I,L^2(\Omega^{out}))$, and the following parabolic Caccioppoli-type inequality\footnote{Similar parabolic Caccioppoli inequalities can, for instance, be found in \cite[(3.13)]{BD13}, \cite[Lemma 2.1]{C15}, \cite[Lemma 2.4]{K08}, and \cite[Lemma 2.4]{NP15}. }, which is proved in \cref{proofs_int}.

\begin{proposition}[Parabolic Caccioppoli inequality]\label{par_cacc_int}
	For a function $w\in \HH^{out}$ and thus $w\vert_{I \times \Omega^{in}}\in \HH^{in}$ the following estimate holds:
	\begin{align}
		\Vert w\vert_{I \times \Omega^{in}} \Vert^2_{L^\infty(I,L^2(\Omega^{in}))}+\Vert \alpha^\frac{1}{2}\nabla (w\vert_{I \times \Omega^{in}}) \Vert^2_{L^2(I\times\Omega^{in})}\leq \frac{8\alpha_1}{\delta^2}\Vert w \Vert^2_{L^2(I\times\Omega^{out})}. \label{caccioppoli_int}
	\end{align}
\end{proposition}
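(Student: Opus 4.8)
The plan is to establish the parabolic Caccioppoli inequality by the standard localization-via-cutoff argument adapted to the space-time setting, testing the weak formulation of $(P^{out})$ against the solution itself multiplied by a suitable cutoff function. Concretely, I would choose a Lipschitz cutoff $\eta:\Omega^{out}\to[0,1]$ with $\eta\equiv 1$ on $\Omega^{in}$, $\eta\equiv 0$ near $\partial\Omega^{out}$, and $\Vert\nabla\eta\Vert_{L^\infty}\leq 2/\delta$, which is possible since $\dist(\partial\Omega^{in},\partial\Omega^{out})>\delta$. Since $w\in\HH^{out}$ solves $(P^{out})$ with $f=0$, $u_0=0$, the function $\eta^2 w(t)$ lies in $H^1_0(\Omega^{out})$ for a.e.\ $t$ and is an admissible test function; a density/mollification argument in time (replacing $\varphi$ by an approximation of the characteristic function of $(0,\tau)$) then yields, for a.e.\ $\tau\in I$, the energy identity
\begin{align*}
\frac{1}{2}\Vert \eta w(\tau)\Vert_{L^2(\Omega^{out})}^2 + \int_0^\tau\!\!\int_{\Omega^{out}} \eta^2\,\alpha\nabla w\cdot\nabla w
= -\,2\int_0^\tau\!\!\int_{\Omega^{out}} \eta\, w\,\alpha\nabla w\cdot\nabla\eta,
\end{align*}
using $w(0)=0$ and the product rule $\nabla(\eta^2 w)=\eta^2\nabla w + 2\eta w\nabla\eta$.

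From this identity I would absorb the right-hand side by Young's inequality: write $|2\eta w\,\alpha\nabla w\cdot\nabla\eta|\leq \frac12\eta^2\,\alpha\nabla w\cdot\nabla w + 2 w^2\,\alpha\nabla\eta\cdot\nabla\eta$ (using that $\alpha$ is symmetric positive definite, this is Cauchy–Schwarz in the $\alpha$-inner product followed by Young). This absorbs half of the gradient term on the left, leaving
\begin{align*}
\Vert \eta w(\tau)\Vert_{L^2(\Omega^{out})}^2 + \int_0^\tau\!\!\int_{\Omega^{out}} \eta^2\,\alpha\nabla w\cdot\nabla w
\leq 4\int_0^\tau\!\!\int_{\Omega^{out}} w^2\,\alpha\nabla\eta\cdot\nabla\eta
\leq \frac{4\alpha_1\cdot 4}{\delta^2}\,\Vert w\Vert_{L^2(I\times\Omega^{out})}^2,
\end{align*}
where I used $\alpha\nabla\eta\cdot\nabla\eta\leq\alpha_1|\nabla\eta|^2\leq 4\alpha_1/\delta^2$. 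Taking the essential supremum over $\tau\in I$ for the first term and $\tau=T$ for the second, and restricting $\eta\geq\mathbf 1_{\Omega^{in}}$, gives exactly $\Vert w|_{I\times\Omega^{in}}\Vert_{L^\infty(I,L^2)}^2+\Vert\alpha^{1/2}\nabla(w|_{I\times\Omega^{in}})\Vert_{L^2(I\times\Omega^{in})}^2\leq \frac{8\alpha_1}{\delta^2}\Vert w\Vert_{L^2(I\times\Omega^{out})}^2$, matching \cref{caccioppoli_int} (the constant $8$ rather than $16$ suggests the authors instead use a sharper Young split, e.g.\ with weight chosen to give factor $2$ rather than $4$ on the data term, or exploit $\eta\leq1$ more carefully — a minor bookkeeping variation).

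The main obstacle is the \emph{rigorous justification} of using $\eta^2 w$ as a test function and obtaining the energy identity with the pointwise-in-time $L^2$ term, since the weak formulation \cref{global_formulation}/$(P^{out})$ is stated only with smooth temporal test functions $\varphi\in C_0^\infty(I)$ and $w$ a priori only has the regularity $L^\infty(I,L^2)\cap L^2(I,H^1)$ — in particular no time derivative is assumed. The standard route is to first show $w_t\in L^2(I,H^{-1}(\Omega^{out}))$ (which follows from the equation itself once $f=0$, since then $\langle w_t,v\rangle = -(\alpha\nabla w,\nabla v)$), so that $w\in W^{1,2,2}$ and the Lions–Magenes theory gives $w\in C(\bar I,L^2(\Omega^{out}))$ together with the integration-by-parts formula $\int_0^\tau\langle w_t,w\rangle\,dt=\frac12\Vert w(\tau)\Vert^2-\frac12\Vert w(0)\Vert^2$; this is presumably the "additional regularity result" the introduction refers to. One must also check that multiplication by $\eta^2$ preserves membership in the relevant spaces and commutes appropriately with the duality pairing, i.e.\ $\langle w_t,\eta^2 w\rangle$ makes sense and the spatial term produces $\int\eta^2\alpha\nabla w\cdot\nabla w + 2\int\eta w\,\alpha\nabla w\cdot\nabla\eta$. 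Once these functional-analytic points are in place, the remaining estimates are the routine Young/ellipticity manipulations sketched above.
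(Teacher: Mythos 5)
Your proposal follows essentially the same route as the paper: a time-independent cutoff $\eta$, testing (after a density/regularity justification resting on $w_t\in L^2(I,H^{-1}(\Omega^{out}))$, which is exactly the paper's separate regularity lemma) with $\eta^2 w$, the energy identity using $w(0)=0$, and the identical Cauchy--Schwarz/Young split that absorbs half of the gradient term. The only discrepancy is the constant, and it stems not from a sharper Young split but from your choice $\vert\nabla\eta\vert\le 2/\delta$: since $\dist(\partial\Omega^{in},\partial\Omega^{out})>\delta$ strictly, one may take $\vert\nabla\eta\vert\le 1/\delta$, after which bounding the two left-hand terms separately by $4\alpha_1\delta^{-2}\Vert w\Vert^2_{L^2(I\times\Omega^{out})}$ and adding (as the paper does) gives precisely the stated factor $8\alpha_1/\delta^2$.
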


To combine \cref{par_cacc_int} with the compactness theorem of Aubin-Lions and thus prove compactness of $P$, the following regularity result is required, which is proved in \cref{proofs_int}.

\begin{lemma}[Regularity]\label{harmonic_prop_int}
	There holds $\HH^{in} \subseteq W^{1,2,2}(I,H^1(\Omega^{in}),H^{-1}(\Omega^{in}))$ and analogously for $\HH^{out}$.
\end{lemma}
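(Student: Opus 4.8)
The plan is to observe that, by the very definition of $\HH^{in}$, every $w\in\HH^{in}$ already lies in $L^2(I,H^1(\Omega^{in}))$, so the entire content of the lemma is the assertion that the distributional time derivative satisfies $w_t\in L^2(I,H^{-1}(\Omega^{in}))$; and this I would read off directly from the weak formulation $(P^{in})$ with $f=0$, $u_0=0$. Concretely, for almost every $t\in I$ I would introduce the functional $g(t)\in H^{-1}(\Omega^{in})=(H^1_0(\Omega^{in}))^*$ defined by $\la g(t),v\ra_{H^1_0(\Omega^{in})}:=-(\alpha\nabla w(t),\nabla v)_{L^2(\Omega^{in})}$ for $v\in H^1_0(\Omega^{in})$, and then identify this $g$ as the weak time derivative of $w$.

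First I would check that $g$ is an admissible datum. The bound $\alpha_1$ on the coefficient gives, via Cauchy--Schwarz, $\Vert g(t)\Vert_{H^{-1}(\Omega^{in})}\le\alpha_1\Vert\nabla w(t)\Vert_{L^2(\Omega^{in})}$ for a.e.\ $t$, so $g\in L^2(I,H^{-1}(\Omega^{in}))$ because $w\in L^2(I,H^1(\Omega^{in}))$. Moreover, from $w\in L^\infty(I,L^2(\Omega^{in}))$ together with the continuous embedding $L^2(\Omega^{in})\hookrightarrow H^{-1}(\Omega^{in})$ (dual to the dense continuous embedding $H^1_0(\Omega^{in})\hookrightarrow L^2(\Omega^{in})$) one obtains that $w$, viewed as an $H^{-1}(\Omega^{in})$-valued map, is strongly measurable and belongs to $L^2(I,H^{-1}(\Omega^{in}))$ since $I$ is bounded.

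The core step is then to verify that $g$ is indeed the weak time derivative of $w$ in the $H^{-1}(\Omega^{in})$-valued sense, i.e.\ that $\int_I w(t)\,\varphi_t(t)\,dt=-\int_I g(t)\,\varphi(t)\,dt$ holds in $H^{-1}(\Omega^{in})$ for every $\varphi\in C_0^\infty(I)$. Since an element of $H^{-1}(\Omega^{in})$ is determined by its action on all $v\in H^1_0(\Omega^{in})$, and since a bounded linear functional commutes with a Bochner integral, this identity is equivalent to $-\int_I(w(t),v)_{L^2(\Omega^{in})}\,\varphi_t(t)\,dt=\int_I\la g(t),v\ra_{H^1_0(\Omega^{in})}\,\varphi(t)\,dt$ for all such $v$ and $\varphi$, which by the definition of $g$ is exactly $(P^{in})$ for $f=0$. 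This yields $w_t=g\in L^2(I,H^{-1}(\Omega^{in}))$ and hence $w\in W^{1,2,2}(I,H^1(\Omega^{in}),H^{-1}(\Omega^{in}))$. For $\HH^{out}$ the same argument applies verbatim, replacing $\Omega^{in}$ by $\Omega^{out}$ and $(P^{in})$ by $(P^{out})$, whose test space is likewise $H^1_0(\Omega^{out})$.

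I do not expect a genuine obstacle here; the only delicate point is the functional-analytic bookkeeping in the core step, i.e.\ making precise that $g$ and $w$ really define $H^{-1}(\Omega^{in})$-valued $L^2$-functions and that the scalar identities contained in $(P^{in})$ upgrade to the $H^{-1}(\Omega^{in})$-valued weak-derivative statement. It is precisely this step that has no counterpart in the elliptic compactness proof of \cite{BL11}: there membership of the local solutions in the relevant Sobolev space is immediate, whereas here the definition of $\HH^{in}$ controls a priori only the $L^\infty(L^2)\cap L^2(H^1)$-norm of $w$, while the hypothesis of the Aubin--Lions theorem additionally requires the $L^2(H^{-1})$-bound on $w_t$ supplied above.
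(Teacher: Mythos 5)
Your proposal is correct and follows essentially the same route as the paper: both read off $w_t=-A_\alpha w$ (your $-g$) directly from the weak formulation and use the uniform bound on $\alpha$ together with $w\in L^2(I,H^1(\Omega^{in}))$ to conclude $w_t\in L^2(I,H^{-1}(\Omega^{in}))$. The only cosmetic difference is bookkeeping: the paper upgrades the tensor-product test functions $v\varphi$ via their density in $L^2(I,H^1_0(\Omega^{in}))$, while you verify the definition of the Bochner weak derivative directly by pairing with $v\in H^1_0(\Omega^{in})$, which is an equivalent argument.
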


As we aim at providing a good approximation space for a whole set of functions, the Kolmogorov $n$-width \cite{K36} serves as a benchmark and we will see below that the left singular vectors of $P$ actually span a space which is optimal in the sense of Kolmogorov; a notion which we define now.

\begin{definition}[Kolmogorov n-width]\label{kolmogorov}
	Let Hilbert spaces $(X,\Vert\cdot\Vert_X)$ and $(Y,\Vert\cdot\Vert_Y)$ with associated norms and a linear continuous operator $T:Y\rightarrow X$ be given. For an arbitrary $n\in\N$ let $X^n\subseteq X$ denote an n-dimensional subspace of $X$. Then, the Kolmogorov n-width of $T(Y)$ in $X$ is given by
	\begin{align*}
		d_n(T(Y);X):= \underset{dim(X^n)=n}{\underset{X^n\subseteq X}{\inf}}\underset{v\in Y}{\sup} \underset{w\in X^n}{\inf}  \frac{\Vert Tv-w\Vert_X}{\Vert v\Vert_Y}.
	\end{align*}
	An n-dimensional subspace $X^n \subseteq X$ is called optimal for $d_n(T(Y);X)$ if
	\begin{align*}
		\underset{v\in Y}{\sup}\, \underset{w\in X^n}{\inf} \, \frac{\Vert Tv-w\Vert_X}{\Vert v\Vert_Y} = d_n(T(Y);X).
	\end{align*}
\end{definition}

Having proved compactness of $P$, we finally introduce the corresponding adjoint operator $P^*:\HH^{in}\rightarrow\BB^{out}$. Consequently, their composition $P^*P:\BB^{out}\rightarrow\BB^{out}$ is a compact, self-adjoint, non-negative operator. Employing the Hilbert-Schmidt theorem as well as \cite[Theorem 2.2 in Chapter 4]{P85} then yields the following result:

\begin{theorem}[Optimal approximation spaces in the interior]\label{opt_spaces_int}
	Let $\lambda_i \in \R^+$ and $\varphi_i \in \HH^{out}$, $i=1,\ldots,\infty$, denote the eigenvalues and eigenfunctions satisfying the transfer eigenvalue problem: Find $(\lambda_i, \varphi_i) \in (\R^+ ,\HH^{out})$ such that
	\begin{align}\label{EW_Transfer_int}
		((\varphi_i\vert_{I \times \Omega^{in}}, w\vert_{I \times \Omega^{in}}))_{in} = \lambda_i\,((\varphi_i\vert_{I\times\partial\Omega^{out}},w\vert_{I\times\partial\Omega^{out}}))_{out}\quad\forall\,w\in\HH^{out}.
	\end{align}
	Furthermore, let the eigenvalues $\lb\lambda_i\rb_{i=1}^\infty $ be listed in non-increasing order of magnitude: $\lambda_1 \geq \lambda_2 \geq \ldots\geq 0$, additionally satisfying $\lambda_j  \xlongrightarrow{j \rightarrow \infty} 0$. Then the optimal approximation space for $d_n(P(\BB^{out});\HH^{in})$ is given by
	\begin{align*}
		\Lambda^n:= \spann \lb \chi_1,\ldots,\chi_n\rb,\quad \chi_i:=P(\varphi_i\vert_{I\times\partial\Omega^{out}}),\;i=1,\ldots,n.
	\end{align*}
	Moreover, the associated Kolmogorov n-width is characterized as follows:
	\begin{align}\label{kolm_weite_int}
		d_n(P(\BB^{out});\HH^{in})= \underset{v\in \BB^{out}}{\sup}\; \underset{w\in \Lambda^n}{\inf} \; \frac{\lenergy Pv-w\renergy_{in}}{\lenergy v\renergy_{out}}\, = \sqrt{\lambda_{n+1}}\,.
	\end{align}
\end{theorem}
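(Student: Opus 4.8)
The plan is to derive the theorem from the spectral theory of compact self-adjoint operators together with the classical characterization of the Kolmogorov $n$-width of the image of the unit ball under a compact map, following the elliptic template in \cite{BL11,SP16}. Compactness of $P\colon\BB^{out}\to\HH^{in}$ is available at this point (it is the content of the upcoming \cref{compact_op_int}, proved by combining \cref{par_cacc_int}, \cref{harmonic_prop_int} and Aubin--Lions \cite{S86}), so $P^*P\colon\BB^{out}\to\BB^{out}$ is a compact, self-adjoint, non-negative operator on the Hilbert space $(\BB^{out},((\cdot,\cdot))_{out})$. As a preliminary step I would note that $P$ is injective: if $\mu\in\BB^{out}$ satisfies $P\mu=0$, choose $w\in\HH^{out}$ with $w\vert_{I\times\partial\Omega^{out}}=\mu$; then $w$ solves $(P^{out})$ with $f=0$, $u_0=0$ and homogeneous Dirichlet datum, so the standard energy/Gronwall uniqueness argument for the heat equation forces $w\equiv 0$ and hence $\mu=0$. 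Therefore the kernel of $P^*P$ is trivial, which is why the eigenvalues below can be taken strictly positive.

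By the Hilbert--Schmidt theorem there is an orthonormal system $\lb\mu_i\rb_{i\ge1}$ in $\BB^{out}$ consisting of eigenvectors of $P^*P$ with eigenvalues $\lambda_1\ge\lambda_2\ge\cdots>0$ and $\lambda_j\to0$; since $P^*P$ has trivial kernel, this system is a complete orthonormal basis of $\BB^{out}$. Put $\varphi_i:=H(\mu_i)\in\HH^{out}$, so that $\varphi_i\vert_{I\times\partial\Omega^{out}}=\mu_i$. I would then verify that the abstract eigenrelation $P^*P\mu_i=\lambda_i\mu_i$ is exactly the transfer eigenvalue problem \cref{EW_Transfer_int}: for any $w\in\HH^{out}$ with trace $\nu:=w\vert_{I\times\partial\Omega^{out}}$, the definition of the adjoint gives $((\varphi_i\vert_{I\times\Omega^{in}},w\vert_{I\times\Omega^{in}}))_{in}=((P\mu_i,P\nu))_{in}=((P^*P\mu_i,\nu))_{out}$, and as $w$ runs through $\HH^{out}$ the trace $\nu$ runs through all of $\BB^{out}$; hence this quantity equals $\lambda_i((\mu_i,\nu))_{out}$ for every such $w$ precisely when $P^*P\mu_i=\lambda_i\mu_i$. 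Taking $w=\varphi_i$ in this identity also yields $\lenergy\chi_i\renergy_{in}^2=\lambda_i$, so $\lb\chi_i/\sqrt{\lambda_i}\rb$ is an orthonormal system in $\HH^{in}$, i.e.\ the $\chi_i/\sqrt{\lambda_i}$ are the left singular vectors of $P$ with singular values $\sqrt{\lambda_i}$.

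It then remains to run the two-sided singular-value estimate for $d_n$, i.e.\ to apply \cite[Theorem 2.2 in Chapter 4]{P85}. For the upper bound, any $\mu\in\BB^{out}$ expands as $\mu=\sum_i c_i\mu_i$ with $\lenergy\mu\renergy_{out}^2=\sum_i c_i^2$ (there is no kernel contribution since $P^*P$ is injective), so $P\mu=\sum_i c_i\sqrt{\lambda_i}\,(\chi_i/\sqrt{\lambda_i})$ and the best approximation of $P\mu$ from $\Lambda^n$ has squared error $\sum_{i>n}c_i^2\lambda_i\le\lambda_{n+1}\lenergy\mu\renergy_{out}^2$; choosing $\mu=\mu_{n+1}$ shows this is attained, whence the supremum in \cref{kolm_weite_int} equals $\sqrt{\lambda_{n+1}}$. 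For the matching lower bound (and thereby optimality of $\Lambda^n$), given any $n$-dimensional $X^n\subseteq\HH^{in}$ a dimension count produces a nonzero $z\in\spann\lb\chi_1,\dots,\chi_{n+1}\rb$ with $z\perp X^n$ in $\HH^{in}$; writing $z=\sum_{i=1}^{n+1}b_i\chi_i=P\mu$ with $\mu:=\sum_{i=1}^{n+1}b_i\mu_i$ gives $\lenergy z\renergy_{in}^2=\sum_i b_i^2\lambda_i\ge\lambda_{n+1}\lenergy\mu\renergy_{out}^2$ while $\inf_{w\in X^n}\lenergy z-w\renergy_{in}=\lenergy z\renergy_{in}$, so $\sup_{v\in\BB^{out}}\inf_{w\in X^n}\lenergy Pv-w\renergy_{in}/\lenergy v\renergy_{out}\ge\sqrt{\lambda_{n+1}}$ for every $X^n$, hence $d_n(P(\BB^{out});\HH^{in})\ge\sqrt{\lambda_{n+1}}$. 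Combining both bounds proves \cref{kolm_weite_int} and the optimality of $\Lambda^n$. I expect the only non-routine points to be (i) confirming that $\BB^{out}$ and $\HH^{in}$ are genuinely complete under their energy norms, so that the Hilbert-space spectral theorem and \cite{P85} apply --- which I would justify from the Poincar\'e and trace inequalities recalled in the footnote above together with closedness of the local solution spaces under the relevant convergence --- and (ii) the clean identification of \cref{EW_Transfer_int} with the eigenproblem for $P^*P$ carried out above; the remaining computations are the textbook singular-value decomposition argument.
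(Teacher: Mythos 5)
Your overall route is the same as the paper's: compactness of $P$ (from \cref{par_cacc_int}, \cref{harmonic_prop_int} and Aubin--Lions, i.e.\ \cref{compact_op_int}), passage to the compact, self-adjoint, non-negative operator $P^*P$ on $\BB^{out}$, the identification of \cref{EW_Transfer_int} with the eigenproblem $P^*P\mu_i=\lambda_i\mu_i$ via the definition of the adjoint, and then the singular-value characterization of the $n$-width. The only difference is that the paper stops at this reformulation and cites \cite[Theorem 2.2 in Chapter 4]{P85}, whereas you reprove that theorem (upper bound by expansion in the eigenbasis, lower bound by a dimension count); that part of your argument is correct and is exactly the content of the cited result.

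There is, however, one genuinely flawed step: your injectivity argument for $P$. If $P\mu=0$, then $w=H(\mu)$ vanishes on $I\times\Omega^{in}$ only; it does \emph{not} have homogeneous Dirichlet datum on $I\times\partial\Omega^{out}$ --- the datum is $\mu$ itself, which is precisely what you are trying to show vanishes. So the ``standard energy/Gronwall uniqueness argument'' proves the wrong implication ($\mu=0\Rightarrow w=0$); what you would actually need is a unique continuation property (vanishing on the interior subdomain forces vanishing on all of $I\times\Omega^{out}$), which is a much stronger statement and is not available for coefficients that are merely $L^\infty$ in space and time, the regime this paper is explicitly designed for. Fortunately, injectivity is not needed anywhere in your proof: decompose $\mu=\mu_0+\sum_i c_i\mu_i$ with $\mu_0\in\ker(P^*P)=\ker(P)$ (the two kernels coincide since $((P^*P\mu,\mu))_{out}=\lenergy P\mu\renergy_{in}^2$); the kernel part is annihilated by $P$, so the upper bound $\sum_{i>n}c_i^2\lambda_i\le\lambda_{n+1}\lenergy\mu\renergy_{out}^2$ survives unchanged, and the lower bound only uses $\chi_1,\dots,\chi_{n+1}$ when $\lambda_{n+1}>0$ (and is trivial when $\lambda_{n+1}=0$). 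With the injectivity claim deleted and the kernel handled this way, your proof is complete and matches the paper's.
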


\begin{proof}
	Thanks to the definition of the transfer operator $P$ and its adjoint operator $P^*$, the transfer eigenvalue problem \cref{EW_Transfer_int} may be reformulated as follows:
	\begin{align*}
		((P^*P\varphi_i\vert_{I\times\partial\Omega^{out}}, w\vert_{I\times\partial\Omega^{out}}))_{out}=\lambda_i\,((\varphi_i\vert_{I\times\partial\Omega^{out}},w\vert_{I\times\partial\Omega^{out}}))_{out}.
	\end{align*}
	Then, the assertion directly follows from Theorem 2.2 in Chapter 4 of \cite{P85}.
\end{proof}

Similar results have been obtained in \cite{BL11,SP16} for elliptic problems.\\
To address non-homogeneous data $f$ and $u_0$, $u^{f} \in L^\infty(I,L^2(\Omega^{out}))\cap L^2(I,H^1_0(\Omega^{out}))$ denotes the solution of $(P^{out})$ for arbitrary $f\in L^2(I,H^{-1}(\Omega^{out}))$ and $u_0\in L^2(\Omega^{out})$. Finally, the optimal local approximation space over $I \times \Omega^{in}$ is given by
\begin{align}
	\Lambda^{n,\text{data}}:=\spann \lb \chi_1,\ldots,\chi_n,\chi^{f} \rb, \quad \chi^{f}:=u^{f}\vert_{I \times \Omega^{in}}- \sum_{k=1}^n ((u^{f}\vert_{I \times \Omega^{in}},\chi_k))_{in} \,\chi_k.
	\label{optimal_space_data}
\end{align}
Thus, $ \Lambda^{n,\text{data}}$ provides an approximation of all functions in $\lb w\in L^\infty(I,L^2(\Omega^{in}))\cap L^2(I,H^1(\Omega^{in}))\, | \, w \text{ solves } (P^{in}) \rb$.


\subsection{Optimal local approximation spaces at the boundary}
\label{boundary}

Let $\Omega^{in}\subseteq \Omega^{out} \subseteq \Omega$ denote subdomains that are located at the boundary of the computational domain $\Omega$ and satisfy $\partial \Omega^{in} \cap \partial \Omega^{out} \cap \partial \Omega \neq \emptyset$ and $\dist(\partial\Omega^{in}\cap \Omega, \partial\Omega^{out}\cap \Omega)>\delta>0$ as illustrated in \cref{inner_domains_boundary} (right); the case $\partial \Omega^{out} \cap \partial \Omega \neq \emptyset$ and $\dist(\partial\Omega^{in}, \partial\Omega^{out})>\delta>0$ (see \cref{inner_domains_boundary} (left)) follows analogously.
\begin{figure}
	\centering 
	\includegraphics[height=1.5cm]{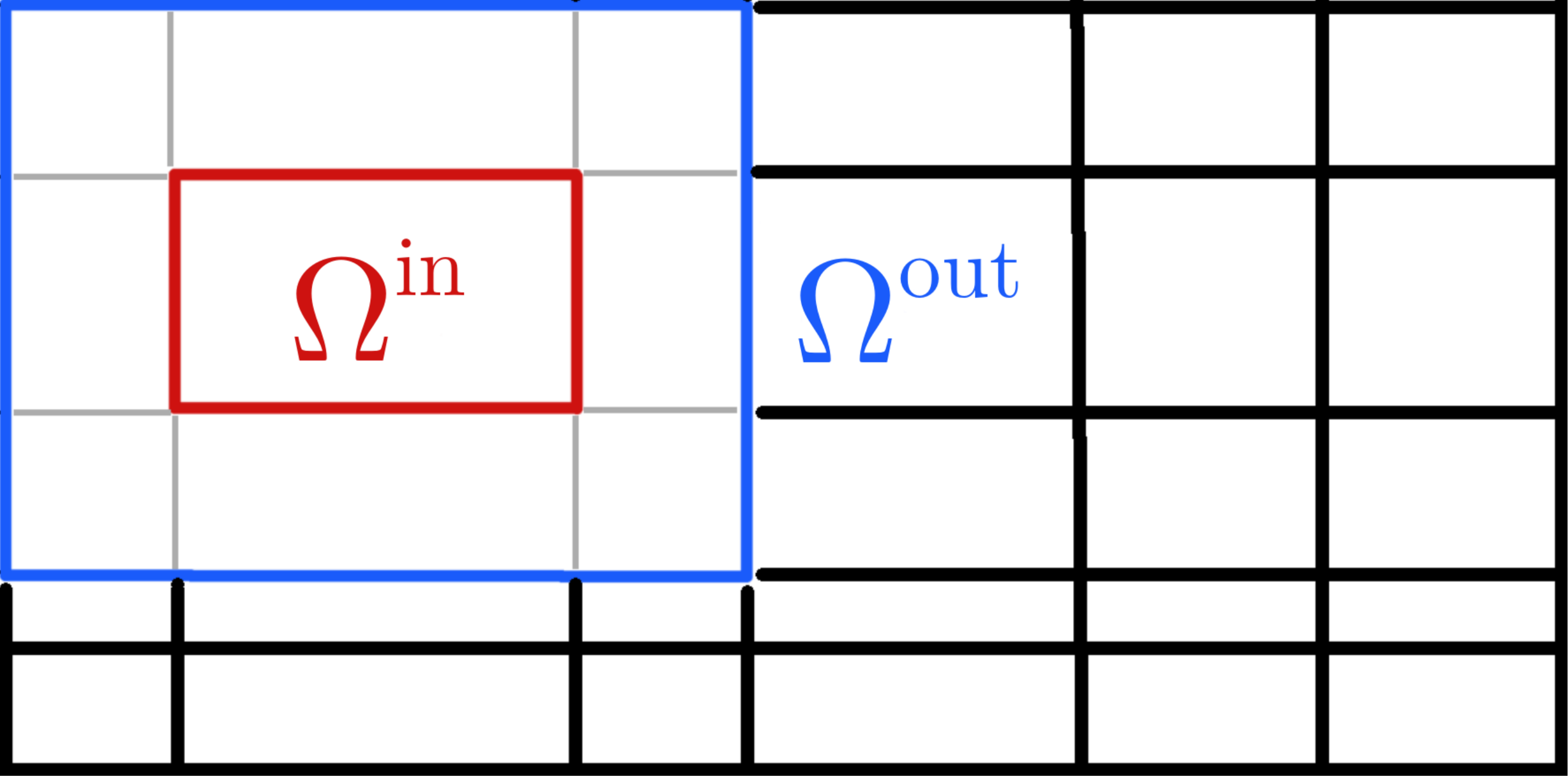}
	\hspace{1cm}
	\includegraphics[height=1.5cm]{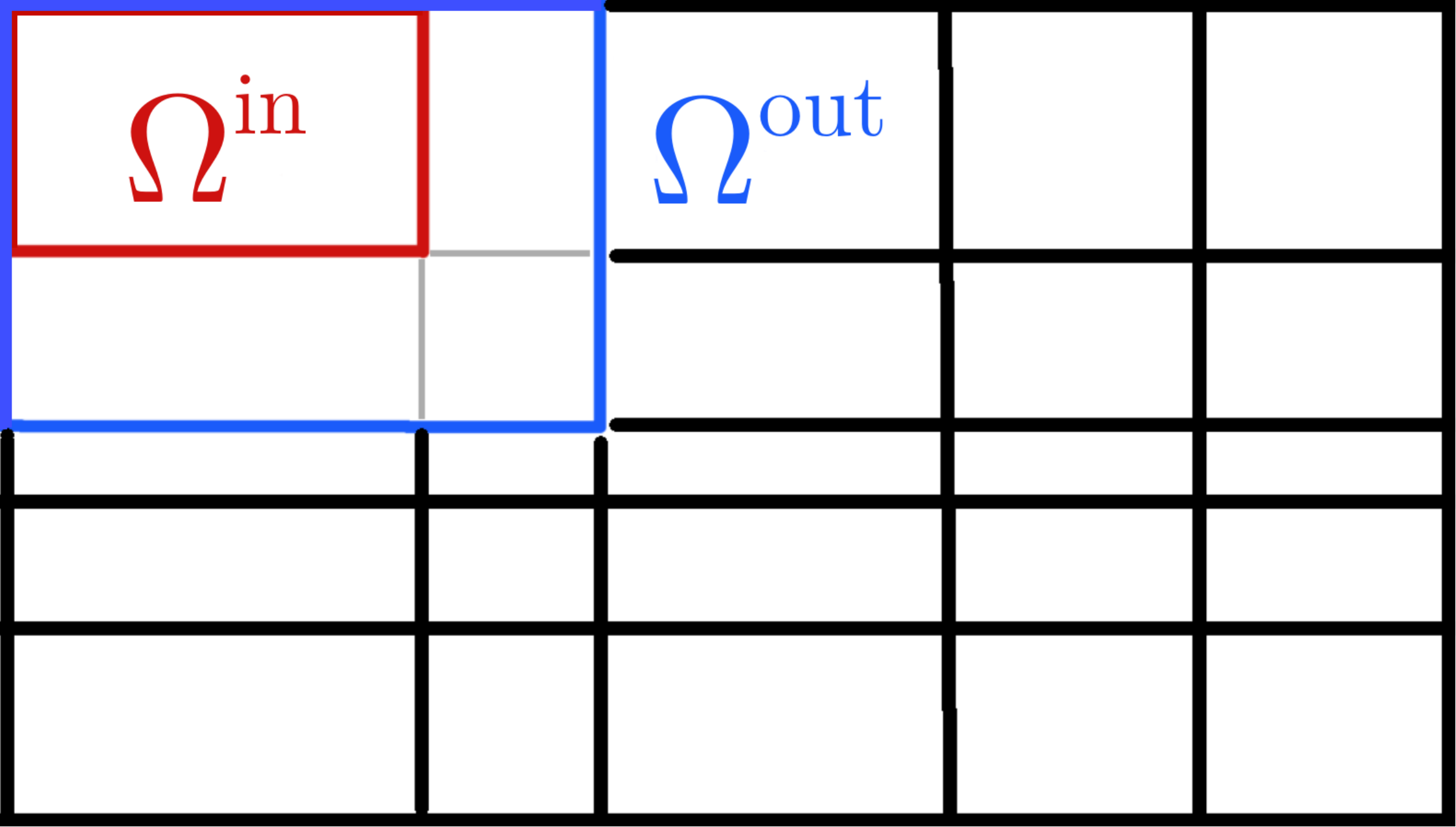}
	\captionsetup{width=0.95\linewidth}
	\captionof{figure}{\footnotesize Local target domain $\Omega^{in}$ and oversampling domain $\Omega^{out}$ at the boundary of $\Omega$.}	
	\label{inner_domains_boundary}
\end{figure}
We want to approximate all functions $w\in L^\infty(I,L^2(\Omega^{in}))\cap L^2(I,V_{\Sigma_D}^{in})$ that satisfy $w(0)=u_0$ in $L^2(\Omega^{in})$ and solve
\begin{equation}
\begin{split}
&-\int_I (w(t),v)_{L^2(\Omega^{in})}\,\varphi_t(t)\,dt + \int_I (\alpha \nabla w(t), \nabla v)_{L^2(\Omega^{in})}\,\varphi(t)\,dt \nonumber\\
= \int_I &\la f(t), v\ra_{V^{in}_0} \,\varphi(t)\,dt + \int_I \la g_N(t), v\ra_{H^\frac{1}{2}(\partial\Omega^{in}\cap\Sigma_N)}\,\varphi(t)\,dt  \quad \forall \,v\in V^{in}_0,\,\varphi \in C_0^\infty(I),\hspace{-0.6cm}
\end{split} \tag{\text{$P^{in}$}}
\end{equation}
where $V_{\Sigma_D}^{in}\mspace{-2mu}:= \mspace{-2mu}\lb w\mspace{-2mu} \in\mspace{-2mu} H^1(\Omega^{in}) \mid w \mspace{-2mu}=\mspace{-2mu} g_D \text{ on }\partial\Omega^{in}\mspace{-2mu}\cap\mspace{-2mu}\Sigma_D \rb$ and $V^{in}_0:= \lb w \mspace{-2mu}\in\mspace{-2mu} H^1(\Omega^{in}) \mid w\mspace{-2mu} =\mspace{-2mu} 0$ $ \text{on }\partial\Omega^{in} \mspace{-2mu}\cap\mspace{-2mu} (\Omega \mspace{-2mu}\cup \mspace{-2mu}\Sigma_D)\rb$. The analogous problem on $I\times\Omega^{out}$ will be denoted by $(P^{out})$.
We again first address the case where $f=0$, $u_0=0$, $g_D=0$, and $g_N=0$ and consider
\begin{align*}
&\HH^{in}:= \big\lb w\in L^\infty(I,L^2(\Omega^{in}))\cap L^2(I,V^{in}) \big| w \text{ solves } (P^{in}) \text{ for } f,u_0,g_D,g_N=0 \big\rb,\\
&\text{where } V^{in}:=\big\lb w \in H^1(\Omega^{in}) \big| w = 0 \text{ on } \partial\Omega^{in}\cap\Sigma_D \big\rb,\; \HH^{out} \text{ is defined analogously},\\
&\BB^{out} := \big\lb w\vert_{I\times\partial\Omega^{out}}\big| w \in \HH^{out}\big\rb.
\end{align*}
We introduce the transfer operator $P:\BB^{out} \rightarrow \HH^{in}$ defined as 
\begin{align}\label{trans_op}
P(w\vert_{I\times \partial\Omega^{out}}):=w\vert_{I\times\Omega^{in}}\quad \text{ for all } w\in \HH^{out},
\end{align}
where the compactness of $P$ follows from similar arguments as above (see \cref{proofs_bound}). Analogous to \cref{interior} we then obtain that the optimal approximation space for $d_n(P(\BB^{out});\HH^{in})$ is given by $\Lambda^n:= \spann \lb \chi_1,\ldots,\chi_n\rb $, $\chi_i:=P(\varphi_i\vert_{I\times \partial\Omega^{out}}) $,
where $\lambda_i\in \R^+$ and $\varphi_i\in \HH^{out}$ denote the $n$ largest eigenvalues and corresponding eigenfunctions that satisfy $P^*P\varphi_i = \lambda_i \varphi_i$. Moreover, it again follows that the Kolmogorov n-width is given by $d_n(P(\BB^{out});\HH^{in}) =  \sqrt{\lambda_{n+1}}$.

To address non-homogeneous data $f$, $u_0$, $g_D$, and $g_N$, first choose a lifting function $u^{b}\in W^{1,2,2}(I,V_{\Sigma_D},V^*)$\footnote{Recall that $V_{\Sigma_D}:= \lb w \in H^1(\Omega) \mid w = g_D \text{ on } \Sigma_D \rb$ and $V:=\lb w \in H^1(\Omega) \mid w = 0 \text{ on }\Sigma_D\rb$.} that satisfies $\alpha \nabla u^{b} \cdot n= g_N$ in $L^2(I,H^{-\frac{1}{2}}(\Sigma_N))$, $u^{b}(0)=0$ in $L^2(\Omega)$, and is equal to zero on the union of all local oversampling domains that do not touch the global boundary.\footnote{Note that in practice this is not a problem since we can for instance choose $u^b$ to be zero on all inner degrees of freedom and thus do not violate locality.} Next, for $f\in L^2(I,(V^{out}_0)^*)$ and $u_0\in L^2(\Omega^{out})$ let $u^{f} \in L^\infty(I,L^2(\Omega^{out}))\cap L^2(I,V^{out}_0)$ be the solution of $(P^{out})$, where the right hand side of $(P^{out})$ is given by 
\begin{align*}
\int_I \la f(t), v\ra_{V^{out}_0}\,\varphi(t)\,dt + \int_I (u^{b}(t),v)_{L^2(\Omega^{out})}\,\varphi_t(t)\,dt -  \int_I (\alpha \nabla u^{b} (t), \nabla v )_{L^2(\Omega^{out})}\,\varphi(t)\,dt
\end{align*}
for all  $v\in V^{out}_0$ and $\varphi \in C_0^\infty(I)$ and $V^{out}_0:= \lb w \in H^1(\Omega^{out}) \mid w = 0 \text{ on }\partial\Omega^{out} \cap (\Omega \cup \Sigma_D)\rb$. Then, the optimal approximation space over $I\times\Omega^{in}$ is denoted by $\Lambda^{n,\text{data}}:=\spann \lb \chi_1,\ldots,\chi_n,\chi^{f},u^{b}\vert_{I\times\Omega^{in}}\rb$, where $ \chi^{f}:=u^{f}\vert_{I\times\Omega^{in}}- \sum_{k=1}^n ((u^{f}\vert_{I\times\Omega^{in}},\chi_k))_{in} \,\chi_k$.


\section{Approximation of the optimal local approximation spaces}
\label{impl_and_random}
In this section we describe how to compute an approximation of the transfer eigenvalue problem (cf. \cref{EW_Transfer_int}) and thus an approximation of the optimal local reduced space $\Lambda^{n,\text{data}}$ (cf. \cref{optimal_space_data}) employing the FE method, and discuss its practical realization via Krylov subspace methods and random sampling.

\subsection{Computational realization of the transfer eigenvalue problem}
\label{implementation}
To simplify the notation we consider in this section subdomains $\Omega^{in}\subseteq\Omega^{out}$ located in the interior of $\Omega$ and homogeneous initial conditions. We assume that a partition of $\Omega^{out}$ is given such that $\partial\Omega^{in}$ does not intersect any element of that partition and consider a conforming finite element (FE) space $V_h^{out}\subseteq H^1(\Omega^{out})$ of dimension $N_h^{out}\in\N$. Furthermore, we introduce a triangulation of the time interval $I=(0,T)$ and consider a piecewise linear FE space $S_t\subseteq H^1(I)$ and a piecewise constant FE space $Q_t\subseteq L^2(I)$ of dimension $N_T\in \N$.\footnote{We assume that homogeneous initial conditions are included in the definition of $S_t$. For coefficients $\alpha$ that do not depend on the temporal variable, the Petrov-Galerkin formulation is equivalent to a Crank Nicolson time stepping procedure \cite{UP14}.} Then, the ansatz and test space of dimension $N:= N_T \cdot N_h^{out} $ for the Petrov-Galerkin approximation are given by $S_t \otimes V_h^{out}$ and $Q_t \otimes V_h^{out}$ with canonical basis functions $\phi_i$ and $\psi_i$ for $i=1,\ldots,N$. To ensure stability, we assume that the space-time discretization satisfies the CFL condition (cf. \cite{A12}).\footnote{Unconditional stability can be proven, for instance, if the discrete ansatz space is a subspace of the discrete test space or if the test space is refined with respect to the temporal direction. However, in the latter case the dimension of the test space will be larger than the dimension of the ansatz space and the discrete solution needs to be computed as the minimizer of a certain residual functional (see, e.g., \cite{A12}).} We introduce the matrix $\mathbf{B}\in \R^{N \times N}$ defined as 
\begin{align*}
\mathbf{B}_{ij}\mspace{-1mu}:=\mspace{-1mu} ((\phi_j)_t,\psi_i)_{L^2(I\times\Omega^{out})}\mspace{-1mu} +\mspace{-1mu} (\alpha \nabla \phi_j, \nabla \psi_i)_{L^2(I\times\Omega^{out})}\;\,\forall \phi_j \in S_t \mspace{-1mu}\otimes\mspace{-1mu} V_h^{out}, \psi_i \in Q_t \mspace{-1mu}\otimes \mspace{-1mu}V_h^{out}
\end{align*}
for $i,j=1,\ldots,N$. In addition, we suppose that the $N_{out}$ rows of $\mathbf{B}$ that correspond to nodes on $I\times\partial\Omega^{out}$ are replaced by the respective rows of the identity matrix. For every $m=1,\ldots,N_{out}$ we then compute the local solution $\mathbf{u}_m\in\R^{N}$ by solving
\begin{align}\label{local_comp_1}
\mathbf{B}\, \mathbf{u}_m = \mathbf{e}_m,
\end{align}
where $\mathbf{e}_m\in \R^{N}$ is the unit vector associated with the m-th node that lies on $I\times\partial\Omega^{out}$. Moreover, we denote by  $\mathbf{M}_{in}\in \R^{N_{in} \times N_{in}}$ and $\mathbf{M}_{out} \in \R^{N_{out}\times N_{out}} $ inner product matrices associated with $I\times\Omega^{in}$ and $I\times\partial\Omega^{out}$ and $\mathbf{D}_{\,\rightarrow \text{in}} \in \R^{N_{in}\times N}$ restricts the coefficients of a FE function on $I\times\Omega^{out}$ to the respective $N_{in}$ coefficients corresponding to $I\times\Omega^{in}$. Finally, we assemble and solve the following generalized eigenvalue problem: Find eigenvalues $\lambda_{h,j} \in \R^+$ with corresponding eigenvectors $\boldsymbol{\xi}_j\in \R^{N_{out}}$ such that\footnote{Assuming a certain ordering of the nodes, the matrix containing the restrictions of $\mathbf{u}_{1},\ldots,\mathbf{u}_{N_{out}}$ to $I\times\partial\Omega^{out}$ on the right hand side simplifies to the identity matrix.}
\begin{equation}\label{local_comp_2}
\left[\mathbf{D}_{\,\rightarrow \text{in}}\,  \mathbf{u}_{1}  \ldots  \mathbf{D}_{\,\rightarrow \text{in}} \,\mathbf{u}_{N_{out}} \right]^\top \mathbf{M}_{in} \left[ \mathbf{D}_{\,\rightarrow \text{in}}\, \mathbf{u}_{1}\ldots\mathbf{D}_{\,\rightarrow \text{in}}\, \mathbf{u}_{N_{out}} \right] \boldsymbol{\xi}_j
= \lambda_{h,j}\, \mathbf{M}_{out}\, \boldsymbol{\xi}_j.
\end{equation}
The coefficients of the FE approximation of the reduced basis $\lbrace \chi_1,\ldots,\chi_n\rbrace $ are then given by $\boldsymbol{\chi}_j :=   \left[ \mathbf{D}_{\,\rightarrow \text{in}}\,\mathbf{u}_{1}\;\ldots\; \mathbf{D}_{\,\rightarrow \text{in}}\,\mathbf{u}_{N_{out}} \right] \boldsymbol{\xi}_j \in \R^{N_{in}}$ for $j=1,\ldots,n$. 

Moreover, the transfer eigenvalue problem can equivalently be approximated using the matrix representation $\mathbf{P}\mspace{-2mu}\in\mspace{-2mu}\R^{N_{out} \times N_{in}}$ of the discrete transfer operator $P_h$ given as
\begin{align}\label{discrete_trans_op}
\mathbf{P} \,\boldsymbol{\xi} = \mathbf{D}_{\,\rightarrow \text{in}} \, \mathbf{B}^{-1}\, \mathbf{D}_{\,\text{out}\rightarrow} \,\boldsymbol{\xi}.
\end{align}
Here, $ \mathbf{D}_{\,\text{out}\rightarrow} \in \R^{N\times N_{out}}$ extends a coefficient vector corresponding to $I\times\partial\Omega^{out}$ to the respective coefficient vector associated with $I\times\Omega^{out}$ by setting the new entries to zero. Consequently, we solve the problem: Find $\lambda_{h,j} \in \R^+$ and $\boldsymbol{\xi}_j\in \R^{N_{out}}$ such that
\begin{align}\label{discrete_eigenproblem}
\mathbf{P}^\top \,\mathbf{M}_{in} \,\mathbf{P} \,\boldsymbol{\xi}_j = \lambda_{h,j} \,\mathbf{M}_{out}\,\boldsymbol{\xi}_j.
\end{align}
The reduced basis FE coefficients are then given by $\boldsymbol{\chi}_j := \mathbf{P}\, \boldsymbol{\xi}_j \in \R^{N_{in}}$ for $j=1,\ldots,n$.

To augment the reduced FE basis with the data $f$ we compute the solution of $\mathbf{B}\, \mathbf{u}^{f} = \mathbf{F}$, where $\mathbf{F}_i := 0$ for indices $i$ that correspond to nodes on $I\times\partial\Omega^{out}$ and $\mathbf{F}_i := (f,v_i)_{L^2(I\times\Omega^{out})}$, $v_i \in Q_t\otimes V_h$, else. Then, $\mathbf{D}_{\,\rightarrow \text{in}}\,\mathbf{u}^{f}$ is added to the basis.

\begin{remark}[Comparison of computational approaches to approximate the optimal local spaces] \label{remark_complexity}
	A direct computation of the optimal local space via \cref{local_comp_1} and \cref{local_comp_2} would require $N_{out}$ evaluations of the transfer operator and thus $N_{out}$ local solutions of the PDE and solving a dense generalized eigenproblem of dimension $N_{out}\times N_{out}$. As this becomes infeasible for large $N_{out}$, one would in general use Krylov subspace or randomized methods for the approximation of the transfer eigenvalue problem. 
	
	In Krylov subspace methods, the application of the transfer operator \cref{discrete_trans_op} would be implicitly passed to the eigenvalue solver. To calculate the $n$ eigenvectors corresponding to the biggest $n$ eigenvalues of  $P_h^\top P_h$ using, for instance, the implicitly restarted Arnoldi method (IRAM) from \cite{LS98}, $\mathcal{O}(n)$ evaluations of $P_h$ and $P_h^\top$ are required in every iteration. Algorithm 1 in subsection SM2.1 summarizes the approximation of the optimal local spaces using IRAM.
	While Krylov subspace methods can lead to more accurate approximations especially for slowly decaying singular values, randomized methods have the main advantage that they are inherently stable and amenable to parallelization. 
	
	To generate an approximation space of dimension $n$ via random sampling as described in subsections SM2.2 and \ref{randomization}, $n + n_t$ evaluations of the transfer operator are required in total; $n$ evaluations to construct the basis and $n_t$ evaluations to construct test vectors that are used for a probabilistic a posteriori error estimator. In comparison, the $\mathcal{O}(n)$ evaluations of $P_h$ and $P_h^\top$ in every iteration of IRAM will likely sum up to more than $n+n_t$ evaluations required by random sampling.
	
	As randomized methods can outperform Krylov subspace methods even in the sequential setting (see, e.g., \cite{BS18}), they are thus an appealing choice for the approximation of the optimal local spaces. For a more in-depth comparison of Krylov subspace and randomized methods, we refer, for instance, to \cite[section 6]{HM11}.
\end{remark}

\begin{remark}[Computational complexity] \label{remark_comp_complexity}
		The computational complexity of the local basis construction is clearly dominated by the evaluation of $P_h$ or $P_h^\top$ and thus the numerical solution of the local PDE, where we employ a sparse direct solver. For a standard space-time FE discretization in three dimensions (one temporal and two spatial dimensions) the factorization of $\mathbf{B}\in\R^{N\times N}$ (e.g., LU, QR, Cholesky) can be computed in $\mathcal{O}(N^2)$ work \cite{GV13}. After factorizing, the computational complexity for each local solution of the PDE is $\mathcal{O}(N^{4/3})$ \cite{GV13}. More details on the computational complexity of the local basis construction are provided in subsection SM2.3.
\end{remark}

\subsection{Quasi-optimal local approximation spaces via random sampling}
\label{randomization}

To construct a suitable approximation $\Lambda^n_\text{rand}$ of the optimal local space $\Lambda^n$ introduced in \cref{local_appr_spaces}, we prescribe random boundary conditions on $I\times\partial\Omega^{out}$, where the coefficient vectors of the corresponding FE functions are normally distributed with mean zero and covariance matrix $\mathbf{M}_{out}^{-1}$. Then, $\mathbf{P}$ is applied to the random vectors and $\Lambda^n_\text{rand}$ is spanned by the resulting local solutions. Using this approach we can achieve a quasi-optimal convergence of order $\sqrt{n\,\lambda_{h,n+1}}$ \cite{BS18,HM11}. Furthermore, we employ an adaptive algorithm that is driven by a probabilistic a posteriori error estimator. The output of the algorithm is an approximation space $\Lambda^n_\text{rand}$ that satisfies the property $\mathbb{P}(\Vert P_h-\text{proj}_{\Lambda^n_\text{rand}} P_h\Vert \leq \tol) > (1 - \epsalgo)$,\footnote{Here, $\text{proj}_{\Lambda^n_\text{rand}}$ denotes the orthogonal projection onto the space $\Lambda^n_\text{rand}$.} where the accuracy $\tol$ and the failure probability $\epsalgo$ are prescribed by the user. For further details we refer to subsection SM2.2 and to \cite{BS18} where methods from randomized linear algebra \cite{HM11} have been used to approximate the optimal local approximation spaces in the elliptic setting.


\section{Global approximation}
\label{global_approximation}

To compute an approximation of the global solution, we employ the GFEM as one example for coupling the local approximation spaces introduced in \cref{local_appr_spaces} since it allows to bound the global approximation error in terms of the local error contributions. Exploiting the local optimality result \cref{kolm_weite_int}, which states that $\Vert P - \text{proj}_{\Lambda_n} P \Vert = \sigma_{n+1}$\footnote{Recall that $\sigma_{n+1}$ denotes the $n+1$st singular value of $P$ and $\text{proj}_{\Lambda_n}$ is the orthogonal projection onto the space $\Lambda_n$ spanned by the $n$ leading left singular vectors of $P$.}, we first show in  \cref{error_bounds} that the relative local approximation error in the $L^2(H^1)$-seminorm is bounded by the projection error (which equals $\sigma_{n+1}$) times a locally computable constant (cf. \cref{a_priori}). Subsequently, we prove in \cref{global_error} that the global error between the solution and the GFEM approximation in a suitable graph norm can be bounded only by the local errors in the $L^2(H^1)$-seminorm and is thus decaying as $\sigma_{n+1}$ or better. If we employ random sampling to approximate the optimal local spaces, the local and global a priori error bounds are still valid, provided that $\Vert P_h-\text{proj}_{\Lambda^n_\text{rand}} P_h\Vert \leq \varepsilon$ for a local error tolerance $\varepsilon>0$ with a very low failure probability (cf. \cref{randomization}), and we can achieve a local and global error decaying as $\sqrt{n}\, \sigma_{h,n+1}$ or better (cf. Theorem SM2). A priori error bounds concerning the elliptic case can be found in \cite{BH14,BL11,BS18,SP16}. Finally, we discuss the computational realization of the global GFEM approximation in \cref{implementation_global}.

In contrast to existing approaches for parabolic PDEs \cite{GS00}, we consider here a space-time GFEM based on local space-time ansatz functions. Consequently, the computation of the global approximation does not require a time stepping method and is thus computationally very efficient due to the very rapid and exponential decay of the local and global approximation errors observed in several numerical experiments (cf. \cref{numerical_experiments}). In contrast, for certain problems, for instance, problems with multiscale diffusion coefficients that are varying non-periodically in time, a reduction only with respect to the spatial variable can become expensive if the time discretization involves many time steps. As either the reduced spatial bases would become very large since snapshots for many time points have to be included or one would have to use an adaptive-in-time procedure based on smaller time intervals, a very large amount of memory would be consumed.

To this end, let $\lbrace\Omega^{in}_1,\ldots,\Omega^{in}_M\rbrace$ be an open cover of the computational domain $\Omega$ such that $\Omega = \cup_{i=1}^M \Omega^{in}_i$. We introduce a corresponding partition of unity $\lbrace \psi_1,\ldots,\psi_M\rbrace$ with the following properties for $i=1,\ldots,M$ and $0< c_1,c_2 < \infty$: 
\begin{align} \label{properties_pou}
\begin{split}
&\psi_i \in C^1(\Omega^{in}_i), \quad
\supp(\psi_i)\subseteq \overline{\Omega^{in}_i},\quad
\sum_{i=1}^M \psi_i(x) = 1 \quad\forall\,x\in \Omega,\\
&\Vert \psi_i \Vert_{L^\infty(\Omega)} \leq c_1,\quad
\Vert \nabla \psi_i \Vert_{L^\infty(\Omega)} \leq c_2 / \diam(\Omega^{in}_i).
\end{split}
\end{align}
Then, we define the global GFEM ansatz space as $X_\mathrm{GFEM}:=\oplus_{i=1}^M \lbrace \psi_i u_i \mid u_i \in \Lambda_i^{n,\text{data}}\rbrace$, where $\Lambda_i^{n,\text{data}}$ is the local reduced space on $I\times\Omega_i^{in}$ (cf. \cref{local_appr_spaces}). The oversampling domains used to construct the local reduced spaces will be denoted by $I\times\Omega^{out}_i$ for $i=1,\ldots,M$. Moreover, we assume that we have the following overlap conditions: There exist $M^{in},M^{out}\in\N$ such that for every $x\in\Omega$ we have that 
\begin{align}\label{overlap_cond}
\#\lbrace i \mid x \in \Omega^{in}_i\rbrace\leq M^{in} \quad\text{and}\quad \#\lbrace i \mid x \in \Omega^{out}_i\rbrace\leq M^{out}.
\end{align}
Finally, let a test space $V_\mathrm{GFEM} \subseteq L^2(I,V)$\footnote{Recall that $V:=\lb w \in H^1(\Omega) \mid w = 0 \text{ on }\Sigma_D\rb $ with $\Vert \cdot \Vert_V := \Vert \alpha^\frac{1}{2} \cdot \Vert_{L^2(\Omega)}+\Vert \alpha^\frac{1}{2}\nabla \cdot \Vert_{L^2(\Omega)}$.} be given such that the inf-sup condition 
\begin{align}\label{inf_sup}
\beta = \inf_{u \in X_\mathrm{GFEM}} &\sup_{v \in V_\mathrm{GFEM}} \frac{\la u_t, v \ra_{L^2(I,V)} + (\alpha \nabla u, \nabla v)_{L^2(I,L^2(\Omega))} }{\Vert \alpha^\frac{1}{2}\nabla u \Vert_{L^2(I,L^2(\Omega))} \Vert v \Vert_{L^2(I,V)}} > \beta_0>0
\end{align}
is satisfied. Then, the global Petrov-Galerkin GFEM approximation reads as follows: Find $u_\mathrm{GFEM}\in X_\mathrm{GFEM}$ such that
\begin{align}\label{GFEM_formulation}
\begin{split}
\la (u_\mathrm{GFEM})_t, v \ra_{L^2(I,V)} &+ (\alpha \nabla u_\mathrm{GFEM}, \nabla v)_{L^2(I,L^2(\Omega))} \\
&=  \la f, v \ra_{L^2(I,V)} + \la g_N(t), v\ra_{L^2(I,H^\frac{1}{2}(\Sigma_N))} \quad \forall \, v \in V_\mathrm{GFEM}. 
\end{split} 
\end{align}

\begin{remark}
		In general, we cannot guarantee stability of the discretization. A well-known strategy to ensure stability is to construct optimally stable pairs of ansatz and test spaces (see, e.g., \cite{And13, BS19,DH12, DemGop10, DemGop11,DieSto21,SteWes21}). This, however, may require global computations and a localization strategy is thus the subject of future work. Nevertheless, \cref{table_inf_sup} shows that for our particular choice of test and trial spaces (c.f. \cref{implementation_global}) the inf-sup constant $\beta$ is close to one and we thus have stability at least for the considered numerical test cases (cf. \cref{global_numerics}).
\end{remark}

\subsection{A priori error bounds}
\label{error_bounds}
In the remainder of the paper all local quantities that are associated with the subdomains $I\times\Omega_i^{in}$ and $I\times\Omega_i^{out}$ will be identified by the additional subscript $i$. Moreover, we assume that the data functions satisfy $f \in L^2(I,L^2(\Omega))$, $ g_D \in L^2(I,H^{3/2}(\Sigma_D))\cap H^{3/4}(I,L^2(\Sigma_D))$ with $g_D\vert_{t=0}=0$, and $g_N \in L^2(I,H^{1/2}(\Sigma_N))\cap H^{1/4}(I,L^2(\Sigma_N))$. The assumptions on $g_D$ and $g_N$ guarantee that the lifting function $u^b$ defined in \cref{boundary} has a time derivative in $L^2(I,L^2(\Omega))$ \cite[Theorem 2.1]{LM72}.\footnote{If $\Sigma_N = \emptyset$ it is sufficient to assume that $ g_D \in L^2(I,H^{1/2}(\partial\Omega))\cap H^{1/2}(I,L^2(\partial\Omega))$ (cf. \cite[Theorem 2.1]{LM72}). We therefore conjecture that also for $g_N$ less regularity is sufficient to infer that $u^b_t \in L^2(I,L^2(\Omega))$, but unfortunately we could not find a result guaranteeing this property.} In the proof of the global a priori error bound we exploit the local a priori error result and the previous assumptions on the data allow us to estimate the sum of local norms of certain data terms by their respective global norms.

\begin{proposition}[Local a priori error bound] \label{a_priori}
	Let $\Omega_i^{in}\subseteq\Omega_i^{out}\subseteq\Omega$ be subdomains as introduced in \cref{interior,boundary} and let $u$ be the (global) solution of \cref{global_formulation}. If the reduced space $\Lambda_i^n$ over $I\times\Omega^{in}_i$ for $\varepsilon_i>0$ satisfies 
	\begin{align}\label{proj_error_epsilon}
	\Vert P_i  - \proj_{\Lambda_i^n} P_i \Vert = \underset{v\in\BB^{out}_i}{\sup}\,\underset{w \in \Lambda_i^n}{\inf} \frac{\Vert \alpha^\frac{1}{2}\nabla( P_i v - w )\Vert_{L^2(I,L^2(\Omega_i^{in}))}}{\Vert \alpha^\frac{1}{2} \nabla H_i (v) \Vert_{L^2(I,L^2(\Omega_i^{out}))}} \leq \varepsilon_i,
	\end{align}
	then there exists a $w_i^n \in \Lambda_i^{n,\text{data}}$ such that the following a priori error bound holds:
	\begin{align}
	\frac{\Vert \alpha^\frac{1}{2} \nabla (u\vert_{I\times\Omega_i^{in}} - w_i^n) \Vert_{L^2(I\times\Omega_i^{in})}}{\Vert \alpha^\frac{1}{2} \nabla u\vert_{I\times\Omega_i^{out}} \Vert_{L^2(I\times\Omega_i^{out})}\mspace{-3mu} +\mspace{-3mu} \Vert f \Vert_{L^2(I\times\Omega_i^{out})} \mspace{-3mu}+ \mspace{-3mu}\Vert u_0 \Vert_{L^2(\Omega_i^{out})}\mspace{-3mu} + \mspace{-3mu}\star_i}
	\leq  \max\lb 2, c_{f,i}\rb \, \varepsilon_i,
	\end{align}
	where $\star_i$ is given by $\star_i := \Vert u^b_t \Vert_{L^2(I,L^2(\Omega_i^{out}))} +\Vert \alpha^\frac{1}{2} \nabla u^{b} \Vert_{L^2(I,L^2(\Omega_i^{out}))}$ if $\Omega_i^{out}$ is located at the global boundary and $\star_i := 0$ else. Furthermore, the constant $c_{f,i}$ is defined as $c_{f,i}:=\Vert u^{f}_i \Vert_{L^2(I,L^2(\Omega_i^{out}))}/\Vert \alpha^\frac{1}{2} \nabla u^{f}_i \Vert_{L^2(I,L^2(\Omega_i^{out}))}$ and $ \proj_{\Lambda_i^n}$ is the orthogonal projection onto the space $\Lambda_i^n$.
\end{proposition}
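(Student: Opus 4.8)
The plan is to follow the route of \cite{BL11} — reduce to the homogeneous local problem, invoke the Kolmogorov optimality of $\Lambda_i^n$, and estimate the energy of the homogeneous part — but to replace the energy-minimisation argument available in the elliptic case by an explicit parabolic energy estimate for the data contribution. Fix $i$. Testing the global weak formulation \cref{global_formulation} only with functions supported in $\Omega_i^{out}$ (extended by zero to global test functions), one sees that the restriction $u\vert_{I\times\Omega_i^{out}}$ lies in $L^\infty(I,L^2(\Omega_i^{out}))\cap L^2(I,H^1(\Omega_i^{out}))$, satisfies $u\vert_{I\times\Omega_i^{out}}(0)=u_0\vert_{\Omega_i^{out}}$, and solves the local oversampling problem $(P^{out})$ with the data inherited from the global problem. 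Subtracting the data part used to augment $\Lambda_i^{n,\text{data}}$, I would set $w:=u\vert_{I\times\Omega_i^{out}}-u^{f}_i$ in the interior case and $w:=u\vert_{I\times\Omega_i^{out}}-u^{b}\vert_{I\times\Omega_i^{out}}-u^{f}_i$ in the boundary case; by the very construction of $u^f_i$ (and $u^b$) all data and all initial conditions cancel, so $w\in\HH_i^{out}$.

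For the approximation step I would put $v:=w\vert_{I\times\partial\Omega_i^{out}}\in\BB_i^{out}$. By the definitions of the harmonic extension $H_i$ and of the transfer operator $P_i$ one has $H_i(v)=w$ and $P_i v=w\vert_{I\times\Omega_i^{in}}$, so the hypothesis \cref{proj_error_epsilon}, applied with $\tilde w:=\proj_{\Lambda_i^n}(P_i v)\in\Lambda_i^n$, yields
\[
\Vert\alpha^{1/2}\nabla(w\vert_{I\times\Omega_i^{in}}-\tilde w)\Vert_{L^2(I\times\Omega_i^{in})}\le\varepsilon_i\,\Vert\alpha^{1/2}\nabla w\Vert_{L^2(I\times\Omega_i^{out})}.
\]
Then I would take $w_i^n:=\tilde w+u^{f}_i\vert_{I\times\Omega_i^{in}}$ in the interior case and $w_i^n:=\tilde w+u^{b}\vert_{I\times\Omega_i^{in}}+u^{f}_i\vert_{I\times\Omega_i^{in}}$ in the boundary case. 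Since $u^{f}_i\vert_{I\times\Omega_i^{in}}=\chi^{f}+\sum_{k=1}^n((u^{f}_i\vert_{I\times\Omega_i^{in}},\chi_k))_{in}\,\chi_k$ lies in $\spann\{\chi_1,\dots,\chi_n,\chi^{f}\}$ (and $u^{b}\vert_{I\times\Omega_i^{in}}$ is one of the generators of $\Lambda_i^{n,\text{data}}$ in the boundary case), we have $w_i^n\in\Lambda_i^{n,\text{data}}$; moreover the added pieces cancel, so that $u\vert_{I\times\Omega_i^{in}}-w_i^n=w\vert_{I\times\Omega_i^{in}}-\tilde w$. Hence the numerator of the claimed estimate is at most $\varepsilon_i\Vert\alpha^{1/2}\nabla w\Vert_{L^2(I\times\Omega_i^{out})}$.

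It remains to bound $\Vert\alpha^{1/2}\nabla w\Vert_{L^2(I\times\Omega_i^{out})}$ by the denominator. The triangle inequality gives $\Vert\alpha^{1/2}\nabla w\Vert_{L^2(I\times\Omega_i^{out})}\le\Vert\alpha^{1/2}\nabla u\vert_{I\times\Omega_i^{out}}\Vert_{L^2(I\times\Omega_i^{out})}+\Vert\alpha^{1/2}\nabla u^{f}_i\Vert_{L^2(I\times\Omega_i^{out})}$, with an extra summand $\Vert\alpha^{1/2}\nabla u^{b}\Vert_{L^2(I,L^2(\Omega_i^{out}))}$ in the boundary case; the first term occurs in the denominator and the extra summand is part of $\star_i$. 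For the term $\Vert\alpha^{1/2}\nabla u^{f}_i\Vert$ I would run the standard parabolic energy estimate: test the equation defining $u^{f}_i$ with $u^{f}_i$, use $\langle\partial_t u^{f}_i,u^{f}_i\rangle=\tfrac12\tfrac{d}{dt}\Vert u^{f}_i\Vert^2_{L^2}$, integrate over $I$ exploiting the initial condition of $u^{f}_i$, estimate the source contribution by Cauchy--Schwarz (and, in the boundary case, the cross term $(\alpha\nabla u^{b},\nabla u^{f}_i)$ by Young's inequality, absorbing $\tfrac12\Vert\alpha^{1/2}\nabla u^{f}_i\Vert^2$ into the left-hand side), and finally replace $\Vert u^{f}_i\Vert_{L^2(I,L^2(\Omega_i^{out}))}$ by $c_{f,i}\,\Vert\alpha^{1/2}\nabla u^{f}_i\Vert_{L^2(I,L^2(\Omega_i^{out}))}$, which is exactly the definition of $c_{f,i}$ and is legitimate because the parabolic Poincar\'e inequality \cref{par_poin_int} makes $c_{f,i}$ finite. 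The resulting quadratic inequality for $a:=\Vert\alpha^{1/2}\nabla u^{f}_i\Vert_{L^2(I,L^2(\Omega_i^{out}))}$ solves to $a\le c_{f,i}\Vert f\Vert_{L^2(I\times\Omega_i^{out})}+\Vert u_0\Vert_{L^2(\Omega_i^{out})}$ in the interior case and to the analogous bound involving the $\star_i$-terms in the boundary case. Assembling the displayed inequalities and collecting the worst-case constant then yields the asserted bound with factor $\max\{2,c_{f,i}\}$.

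\textbf{Main obstacle.} The genuinely new difficulty, compared with the elliptic situation, is the estimate of $\Vert\alpha^{1/2}\nabla u^{f}_i\Vert$ in the last step. In \cite{BL11} the restriction $u\vert_{\Omega^{out}}$ minimises the Dirichlet energy among all $H^1$-functions sharing its trace, so the energy of its harmonic part is automatically controlled by the energy of $u\vert_{\Omega^{out}}$ and no data terms appear. The parabolic solution admits no such variational characterisation, so the contributions of $f$, $u_0$ (and of the lifting $u^{b}$) genuinely enter the denominator; moreover, as the paper claims, one should verify on a simple example that these terms cannot be removed. Carrying out the energy estimate so as to reach the sharp constant $\max\{2,c_{f,i}\}$ — in particular keeping track of the factor produced by Young's inequality on the $u^{b}$ cross term in the boundary case — is the technical heart of the proof and relies essentially on the parabolic Poincar\'e inequality \cref{par_poin_int}.
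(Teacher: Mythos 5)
Your proposal is correct and follows essentially the same route as the paper's own proof: the same decomposition $u\vert_{I\times\Omega_i^{out}}=u^{f}_i+u^{b}\vert_{I\times\Omega_i^{out}}+u^{out}$ with $u^{out}\in\HH_i^{out}$, the same choice of $w_i^n$ via the best approximation of $P_i(u^{out}\vert_{I\times\partial\Omega_i^{out}})$ in $\Lambda_i^n$ combined with the data parts, the triangle inequality, and the same parabolic energy estimate (testing with $u^{f}_i$, using $u^{f}_i(0)=u_0$, Cauchy--Schwarz/Young, and the definition of $c_{f,i}$) to absorb the data terms into the denominator with the constant $\max\lb 2,c_{f,i}\rb$. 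The only cosmetic difference is that you justify finiteness of $c_{f,i}$ via the parabolic Poincar\'e inequality, whereas the standard spatial Poincar\'e inequality for the zero-trace space of $u^{f}_i$ already suffices; the paper simply works with $c_{f,i}$ as defined.
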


\begin{proof}
	The key idea is to employ that functions in $\Lambda_i^{n,\text{data}}$ solve the PDE locally and to use Assumption \cref{proj_error_epsilon}. For a detailed proof see \cref{proofs_error_bounds}.
\end{proof}

\begin{remark}
	Note that Assumption \cref{proj_error_epsilon} holds either with $\varepsilon_i = \sqrt{\lambda_{n+1,i}} $ if we employ the optimal local approximation space (cf. \cref{kolm_weite_int}) or with a very low failure probability if we approximate the optimal local space via random sampling using Algorithm 2 (cf. subsections SM2.2 and \ref{randomization}). Since the transfer eigenvalue problems just contain spatial derivatives, only the approximation error in the $L^2(H^1)$-seminorm can locally be bounded via \cref{opt_spaces_int}. Moreover, one can explicitly construct counterexamples demonstrating that the local a priori error result does in general \emph{not} hold without additional data terms or additional assumptions on the data in the parabolic case as the following remark shows.
\end{remark}

\begin{remark} \label{counter_example}
	In the elliptic case (cf. \cite[Lemma SM5.2]{BS18}) the a priori error bound 
	\begin{align*}
	\Vert \alpha^\frac{1}{2} \nabla (u\vert_{\Omega^{in}} - w^n) \Vert_{L^2(\Omega^{in})}\leq c\,\Vert \alpha^\frac{1}{2} \nabla u\vert_{\Omega^{out}} \Vert_{L^2(\Omega^{out})}
	\end{align*}
	holds for a constant $c>0$. Here, a bound $\Vert \alpha^\frac{1}{2} \nabla (u\vert_{I\times\Omega^{in}} - w^n) \Vert_{L^2(I,L^2(\Omega^{in}))}\leq c\,\Vert \alpha^\frac{1}{2} \nabla u\vert_{I\times\Omega^{out}} \Vert_{L^2(I,L^2(\Omega^{out}))}$ does in general \emph{not} hold as the following example shows: Assume that $\Omega^{out}$ is located in the interior of $\Omega$ and consider the linear heat equation
	\begin{align*}
	\partial_t u - \Delta u = 1\;\; \text{in }I\times\Omega^{out},\;\;
	u(0,x)=0\;\; \forall\,x\in\Omega^{out},\;\;
	u(t,x)=t\;\; \forall\,(t,x)\in I\times\partial\Omega^{out},
	\end{align*}
	where $I\times\Omega^{out} =(0,1)\times (0,\pi)^2$. Then, $u$ equals $\tilde{u} + \hat{u}$, where $\tilde{u}$ and $\hat{u}$ solve
	\begin{align*}
	\partial_t \tilde{u} - \Delta \tilde{u} = 0\;\; \text{in } I\times\Omega^{out},&\;\;
	\tilde{u}(0,x)=0 \;\; \forall\, x\in\Omega^{out}, \;\;
	\tilde{u}(t,x)=t \;\; \forall\,(t,x)\in I\times\partial\Omega^{out}, \\
	\partial_t \hat{u} - \Delta \hat{u} = 1\;\; \text{in } I\times\Omega^{out},&\;\;
	\hat{u}(0,x)=0\;\; \forall \,x\in\Omega^{out},\;\;
	\hat{u}(t,x)=0\;\; \forall\,(t,x)\in I\times\partial\Omega^{out}.
	\end{align*}
	Using separation of variables we obtain (see section SM1)
	\begin{align*}
	\tilde{u}(t,x_1,x_2)&= \sum_{k,l=1}^\infty - \frac{4\,(1-\cos(k\pi)) (1-\cos(l\pi)) (1-e^{-(k^2+l^2)t})}{k\,l\,(k^2+l^2)\,\pi^2} \sin(k x_1) \sin(l x_2) + t,\\
	\hat{u}(t,x_1,x_2)&= -\tilde{u}(t,x_1,x_2) + t.
	\end{align*}
	Hence, $\nabla \hat{u}$ equals $- \nabla \tilde{u}$ and we have $\Vert \nabla u \Vert_{L^2(I\times\Omega^{out})}=0$ while $\Vert \nabla \tilde{u}\Vert_{L^2(I\times\Omega^{out})}>0$. Therefore, the estimate $\Vert \nabla \tilde{u}\Vert_{L^2(I\times\Omega^{out})}\leq \Vert \nabla u \Vert_{L^2(I\times\Omega^{out})}$ does not hold, but since $\tilde{u}$ equals $u - \hat{u}$ we can infer that $\Vert \nabla \tilde{u}\Vert_{L^2(I\times\Omega^{out})}\leq \Vert \nabla u \Vert_{L^2(I\times\Omega^{out})}+\Vert \nabla \hat{u} \Vert_{L^2(I\times\Omega^{out})}$ and the data corrector $\hat{u}$ can be estimated in terms of the data (cf. proof of \cref{a_priori}). However, in specific cases the local a priori error bound may be improved.
\end{remark}

\begin{proposition}[Global GFEM error bound] \label{global_error}
	Let $u$ be the solution satisfying \cref{global_formulation} and let $u_\mathrm{GFEM}\in X_\mathrm{GFEM}$ be the GFEM approximation solving \cref{GFEM_formulation}. If we assume that for each subdomain $\Omega_i^{in}$ and $\varepsilon_i>0$ there exists a $w_i^n \in \Lambda_i^{n,\text{data}}$ such that
	\begin{align}\label{a_priori_epsilon}
	\frac{\Vert \alpha^\frac{1}{2} \nabla (u\vert_{I\times\Omega_i^{in}} - w_i^n) \Vert_{L^2(I\times\Omega_i^{in})}}{\Vert \alpha^\frac{1}{2} \nabla u\vert_{I\times\Omega_i^{out}} \Vert_{L^2(I\times\Omega_i^{out})}\mspace{-3mu}  + \mspace{-3mu} \Vert f \Vert_{L^2(I\times\Omega_i^{out})}\mspace{-3mu} + \mspace{-3mu} \Vert u_0 \Vert_{L^2(\Omega_i^{out})} \mspace{-3mu} +\mspace{-3mu}  \star_i
	}
	\leq \max\lb 2, c_{f,i}\rb\, \varepsilon_i,
	\end{align}
	where $\star_i$ and $c_{f,i}$ are defined as in \cref{a_priori}, the following error bound holds:
	\begin{align*}
	&\sqrt{\Vert (u - u_\mathrm{GFEM})_t \Vert_{L^2(I,(V_\mathrm{GFEM})^*)}^2 + \Vert \alpha^\frac{1}{2} \nabla (u - u_\mathrm{GFEM}) \Vert_{L^2(I,L^2(\Omega))}^2 }\\ 
	\leq \;& \sqrt{10\, M^{out}} \, \max_{i=1,\ldots,M} \big\lbrace C_i(c_1,c_2,\beta,M^{in},\Omega_i^{in},c_{p,i}^{\alpha})\, \max\lb 2, c_{f,i}\rb\,\varepsilon_i \big \rbrace\\
	&\Big(\Vert \alpha^\frac{1}{2}\nabla u \Vert_{L^2(I\times\Omega)} +  \Vert f \Vert_{L^2(I\times\Omega)} + \Vert u_0 \Vert_{L^2(\Omega)} + \Vert u^b_t \Vert_{L^2(I\times \Omega)} +\Vert \alpha^\frac{1}{2} \nabla u^{b} \Vert_{L^2(I\times \Omega)}\Big).
	\end{align*}
	Here, the constant $C_i(c_1,c_2,\beta,M^{in},\Omega_i^{in},c_{p,i}^{\alpha})$ is given by $C_i(c_1,c_2,\beta,M^{in},\Omega_i^{in},c_{p,i}^{\alpha}):= \max \big\lb (1+1/\beta) \sqrt{2 M^{in} \big(c_1^2+(c_2 c_{p,i}^{\alpha} / \diam(\Omega^{in}_i))^2\big)},$ $\big(c_1 + c_2 / \diam(\Omega_i^{in})\big) /\beta \big\rb$ and the constant $c_{p,i}^{\alpha}$ is defined as $c_{p,i}^\alpha := \sup_{w_i \in \HH_i^{in}} \Vert \alpha^\frac{1}{2}  w_i \Vert_{L^2(I\times\Omega^{in}_i)} / \Vert \alpha^\frac{1}{2} \nabla w_i \Vert_{L^2(I\times\Omega^{in}_i)} $.
\end{proposition}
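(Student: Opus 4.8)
The plan is to first reduce the graph norm on the left to the $L^2(H^1)$-seminorm of the global error $e:=u-u_\mathrm{GFEM}$, and then to bound that seminorm by a Céa-type argument based on the inf-sup condition \cref{inf_sup}, using as comparison function the partition-of-unity gluing of the local approximants furnished by \cref{a_priori}; write $b(z,v):=\langle z_t,v\rangle_{L^2(I,V)}+(\alpha\nabla z,\nabla v)_{L^2(I,L^2(\Omega))}$ for the space-time bilinear form. For the first reduction, observe that $u$, after its time term in \cref{global_formulation} is rewritten through the distributional derivative, satisfies $b(u,v)=\langle f,v\rangle_{L^2(I,V)}+\langle g_N,v\rangle_{L^2(I,H^{1/2}(\Sigma_N))}$ for every $v\in L^2(I,V)\supseteq V_\mathrm{GFEM}$, so subtracting \cref{GFEM_formulation} yields the Galerkin orthogonality $b(e,v)=0$ for all $v\in V_\mathrm{GFEM}$. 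Hence for such $v$ one has $\langle e_t,v\rangle_{L^2(I,V)}=-(\alpha\nabla e,\nabla v)_{L^2(I,L^2(\Omega))}\le\Vert\alpha^\frac12\nabla e\Vert_{L^2(I,L^2(\Omega))}\Vert v\Vert_{L^2(I,V)}$, whence $\Vert e_t\Vert_{L^2(I,(V_\mathrm{GFEM})^*)}\le\Vert\alpha^\frac12\nabla e\Vert_{L^2(I,L^2(\Omega))}$ and the left-hand side of the claim is $\le\sqrt2\,\Vert\alpha^\frac12\nabla e\Vert_{L^2(I,L^2(\Omega))}$. This is exactly the point highlighted in the introduction: the reduced-dual norm of the time derivative of the error is controlled via the (Petrov-)Galerkin orthogonality against $V_\mathrm{GFEM}$.

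It remains to estimate $\Vert\alpha^\frac12\nabla e\Vert_{L^2(I,L^2(\Omega))}$. For each $i$ let $w_i^n\in\Lambda_i^{n,\text{data}}$ be as in \cref{a_priori}, and set $w:=\sum_{i=1}^M\psi_i w_i^n\in X_\mathrm{GFEM}$ and $g_i:=u\vert_{I\times\Omega_i^{in}}-w_i^n$. As established in the proof of \cref{a_priori} (this uses that the data correctors enter $\Lambda_i^{n,\text{data}}$ with coefficient one), $g_i$ solves the homogeneous local problem, i.e.\ $g_i\in\HH_i^{in}$; in particular \cref{harmonic_prop_int} gives $(g_i)_t\in L^2(I,H^{-1}(\Omega_i^{in}))$ with the weak identity $\int_I\langle(g_i)_t,\phi\rangle\,dt=-\int_I(\alpha\nabla g_i,\nabla\phi)_{L^2(\Omega_i^{in})}\,dt$ for admissible local test functions $\phi$, and $\Vert\alpha^\frac12 g_i\Vert_{L^2(I\times\Omega_i^{in})}\le c_{p,i}^\alpha\Vert\alpha^\frac12\nabla g_i\Vert_{L^2(I\times\Omega_i^{in})}$ by definition of $c_{p,i}^\alpha$. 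Since $\sum_i\psi_i\equiv1$ we have $u-w=\sum_i\psi_i g_i$ on $\Omega$. Splitting $e=(u-w)+(w-u_\mathrm{GFEM})$, the inf-sup condition \cref{inf_sup} applied to $w-u_\mathrm{GFEM}\in X_\mathrm{GFEM}$ together with $b(w-u_\mathrm{GFEM},v)=b(w-u,v)$ (Galerkin orthogonality again) gives $\Vert\alpha^\frac12\nabla(w-u_\mathrm{GFEM})\Vert_{L^2(I,L^2(\Omega))}\le\beta_0^{-1}\sup_{v\in V_\mathrm{GFEM}}b(w-u,v)/\Vert v\Vert_{L^2(I,V)}$, so the task reduces to estimating $b(w-u,v)$ and $\Vert\alpha^\frac12\nabla(u-w)\Vert_{L^2(I,L^2(\Omega))}$ by the local quantities $\Vert\alpha^\frac12\nabla g_i\Vert_{L^2(I\times\Omega_i^{in})}$.

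The heart of the matter is the treatment of $b(w-u,v)$. The crucial observation is that, by the partition-of-unity properties \cref{properties_pou} ($\supp\psi_i\subseteq\overline{\Omega_i^{in}}$, $\psi_i$ vanishing on $\partial\Omega_i^{in}\cap(\Omega\cup\Sigma_D)$), $\psi_i v$ is an admissible test function for the homogeneous local problem solved by $g_i$; since $(w-u)_t=-\sum_i\psi_i(g_i)_t$ this turns $\langle(w-u)_t,v\rangle_{L^2(I,V)}$ into $\sum_i(\alpha\nabla g_i,\nabla(\psi_i v))_{L^2(I\times\Omega_i^{in})}$, which no longer involves a time derivative. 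Expanding $\nabla(\psi_i v)=(\nabla\psi_i)v+\psi_i\nabla v$ and $\nabla(w-u)=-\sum_i((\nabla\psi_i)g_i+\psi_i\nabla g_i)$ and combining with $(\alpha\nabla(w-u),\nabla v)_{L^2(I,L^2(\Omega))}$, the $\psi_i\nabla v$-contributions cancel and
\begin{align*}
b(w-u,v)=\sum_{i=1}^M(\alpha\nabla g_i,(\nabla\psi_i)\,v)_{L^2(I\times\Omega_i^{in})}-\sum_{i=1}^M(\alpha(\nabla\psi_i)\,g_i,\nabla v)_{L^2(I\times\Omega_i^{in})}
\end{align*}
(estimating $\langle(w-u)_t,v\rangle$ and $(\alpha\nabla(w-u),\nabla v)$ separately instead is what produces the factor $c_1+c_2/\diam(\Omega_i^{in})$ appearing in $C_i$). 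Bounding these inner products by Cauchy-Schwarz in space-time with $\Vert\nabla\psi_i\Vert_{L^\infty(\Omega)}\le c_2/\diam(\Omega_i^{in})$, $\Vert\psi_i\Vert_{L^\infty(\Omega)}\le c_1$ and the Poincaré estimate for $g_i$, and then summing over $i$ by a discrete Cauchy-Schwarz and the overlap bound $\sum_i\Vert\cdot\Vert_{L^2(I\times\Omega_i^{in})}^2\le M^{in}\Vert\cdot\Vert_{L^2(I,L^2(\Omega))}^2$ from \cref{overlap_cond}, one bounds both $\sup_v b(w-u,v)/\Vert v\Vert_{L^2(I,V)}$ and $\Vert\alpha^\frac12\nabla(u-w)\Vert_{L^2(I,L^2(\Omega))}$ by $(\sum_i C_i^2\,\Vert\alpha^\frac12\nabla g_i\Vert_{L^2(I\times\Omega_i^{in})}^2)^{1/2}$ with the $C_i$ from the statement, the factor $1+\beta_0^{-1}$ collecting the two pieces $u-w$ and $w-u_\mathrm{GFEM}$ of $e$. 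Finally, inserting the local a priori bound \cref{a_priori_epsilon}, $\Vert\alpha^\frac12\nabla g_i\Vert_{L^2(I\times\Omega_i^{in})}\le\max\{2,c_{f,i}\}\,\varepsilon_i\,D_i$ with $D_i:=\Vert\alpha^\frac12\nabla u\vert_{I\times\Omega_i^{out}}\Vert_{L^2(I\times\Omega_i^{out})}+\Vert f\Vert_{L^2(I\times\Omega_i^{out})}+\Vert u_0\Vert_{L^2(\Omega_i^{out})}+\star_i$, pulling $\max_i\{C_i\max\{2,c_{f,i}\}\varepsilon_i\}$ out of the sum, using $(a_1+\dots+a_5)^2\le5(a_1^2+\dots+a_5^2)$ for the at most five terms of $D_i$, the overlap bound $\#\{i\mid x\in\Omega_i^{out}\}\le M^{out}$ and the fact that $u^b$ is supported on the boundary oversampling domains, one arrives at $\sum_i D_i^2\le5M^{out}(\Vert\alpha^\frac12\nabla u\Vert_{L^2(I\times\Omega)}+\Vert f\Vert_{L^2(I\times\Omega)}+\Vert u_0\Vert_{L^2(\Omega)}+\Vert u^b_t\Vert_{L^2(I\times\Omega)}+\Vert\alpha^\frac12\nabla u^b\Vert_{L^2(I\times\Omega)})^2$; together with the $\sqrt2$ of the first step this is the prefactor $\sqrt{10M^{out}}$ and yields the claim.

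The step I expect to be the main obstacle is this localization of $b(w-u,v)$, together with the facts it rests on: that $g_i$ genuinely solves the homogeneous local equation (so that $\HH_i^{in}$, \cref{harmonic_prop_int} and the definition of $c_{p,i}^\alpha$ all apply to it), that $\psi_i v$ is a legitimate test function there — which needs the partition of unity to vanish on $\partial\Omega_i^{in}\cap(\Omega\cup\Sigma_D)$, also for subdomains touching the global boundary — and that the manipulations with the distributional time derivative, $\langle(w-u)_t,v\rangle=-\sum_i\langle(g_i)_t,\psi_i v\rangle$ and the ensuing cancellation, are fully rigorous. Everything else is Cauchy-Schwarz, the partition-of-unity bounds, the parabolic Poincaré inequality and the overlap counting.
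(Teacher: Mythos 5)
Your argument is, in its skeleton, exactly the paper's proof of \cref{global_error}: the reduction of the graph norm through the (Petrov-)Galerkin orthogonality \cref{PG_ortho}, giving the factor $\sqrt2$; the comparison function $w=\sum_i\psi_i w_i^n$ built from the local approximants of \cref{a_priori}; the triangle inequality plus the inf-sup condition \cref{inf_sup} and Galerkin orthogonality; the crucial use of $g_i:=u\vert_{I\times\Omega_i^{in}}-w_i^n\in\HH_i^{in}$ to trade the time derivative for spatial terms tested with $\psi_i v$ (the paper makes this rigorous via an approximating sequence $v_k=\varphi_k\tilde v_k$, precisely the density issue you flag, and uses \cref{harmonic_prop_int} and the admissibility of $\psi_i v$ from \cref{properties_pou} in the same way); and finally the weighted parabolic Poincar\'e constant $c_{p,i}^\alpha$, Assumption \cref{a_priori_epsilon}, the factor $5$ from the five data terms and $M^{out}$ from \cref{overlap_cond}, yielding $\sqrt{10\,M^{out}}$. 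The one genuine deviation is your exact cancellation of the $\psi_i\nabla v$ contributions in $b(w-u,v)$, leaving only commutator terms with $\nabla\psi_i$, followed by a discrete Cauchy--Schwarz/overlap summation; the paper instead estimates $\la (u-w)_t,v\ra_{L^2(I,V)}$ and $(\alpha\nabla(u-w),\nabla v)_{L^2(I,L^2(\Omega))}$ separately, bounding $\Vert\alpha^{\frac12}\nabla(\psi_i v)\Vert_{L^2(I,L^2(\Omega))}\leq (c_1+c_2/\diam(\Omega_i^{in}))\Vert v\Vert_{L^2(I,V)}$, which is exactly where the second branch of $C_i$ comes from.

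The cancellation itself is correct, but your constant bookkeeping does not deliver the proposition as stated. Bounding your two commutator terms and summing with \cref{overlap_cond} gives for the inf-sup piece a constant of the form $\beta^{-1}\sqrt{2M^{in}\big((c_2/\diam(\Omega_i^{in}))^2+(c_2c_{p,i}^\alpha/\diam(\Omega_i^{in}))^2\big)}$, which is \emph{not} dominated by the stated $C_i$ in all regimes (e.g.\ for $c_{p,i}^\alpha$ small and $c_2/\diam(\Omega_i^{in})$ large it exceeds $C_i\approx(c_1+c_2/\diam(\Omega_i^{in}))/\beta$ by a factor of order $\sqrt{2M^{in}}$). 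Moreover, claiming that each of $\Vert\alpha^{\frac12}\nabla(u-w)\Vert$ and $\beta^{-1}\sup_v b(w-u,v)/\Vert v\Vert_{L^2(I,V)}$ is bounded by $\big(\sum_i C_i^2\Vert\alpha^{\frac12}\nabla g_i\Vert^2\big)^{1/2}$ \emph{with the $C_i$ of the statement} and then multiplying by $1+\beta_0^{-1}$ double counts the factor $(1+1/\beta)$ already built into $C_i$ (also, the statement is formulated with $\beta$, not $\beta_0$). So as written you obtain the bound with a different, in some regimes larger, constant rather than the stated one. To land on the literal statement, treat the time-derivative term as the paper does: keep $\nabla(\psi_i v)$ intact, use the bound $(c_1+c_2/\diam(\Omega_i^{in}))\Vert v\Vert_{L^2(I,V)}$ so that the inf-sup piece carries the constant $(c_1+c_2/\diam(\Omega_i^{in}))/\beta$, and reserve the factor $(1+1/\beta)$ for the $\Vert\alpha^{\frac12}\nabla(u-w)\Vert$ piece alone; the remainder of your argument (the role of $g_i\in\HH_i^{in}$, the admissibility of $\psi_i v$ also at the global boundary, the Poincar\'e step, and the final counting) coincides with the paper's.
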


\begin{proof}
	The key idea is to exploit the local a priori error bound \cref{a_priori_epsilon} and the fact that the $\mathrm{GFEM}$ basis functions solve the PDE locally. To that end, the proof follows similar ideas as the proofs of Proposition SM5.1 and Corollary SM5.3 in \cite{BS18} and Theorem 2.1 in \cite{BM96}. To additionally control the time derivative of the global error in the $L^2((V_\mathrm{GFEM})^*) $-norm, we use a (Petrov-)Galerkin orthogonality of the approximation error and the reduced test space. A detailed proof is provided in \cref{proofs_error_bounds}.
\end{proof}

\begin{remark}
	\cref{global_error} shows that the global convergence of the GFEM approximation to the true solution with respect to a certain graph norm can be controlled only by the respective local errors in the $L^2(H^1)$-seminorm for both the randomized setting and the transfer eigenvalue problem (choosing $\varepsilon_i = \sqrt{\lambda_{n+1,i}} $). Moreover, it enables a localized construction of the local ansatz spaces such that the global GFEM approximation satisfies a prescribed global error tolerance (with a very low failure probability in the randomized setting), cf. Algorithm 3 in section SM3. We only have one global constant, the reduced inf-sup constant $\beta$. However, \cref{table_inf_sup} shows that $\beta$ is close to one for our numerical experiments. Therefore, we conjecture that in general it is possible to construct suitable local reduced spaces without knowledge about the global computational domain $\Omega$. Furthermore, if we assume that a heat source $f\in L^2(I,V^*)$ with $f\notin L^2(I,L^2(\Omega))$ is given, additional orthogonality assumptions are required to estimate the sum of local norms of $f$ in terms of the global norm of $f$ (cf. \cite[Chapter 4]{B19}).
\end{remark}

\subsection{Computational realization of the global GFEM approximation}
\label{implementation_global}

In the following we outline how a global GFEM approximation may be computed numerically. To simplify the notation we assume homogeneous initial and Dirichlet boundary conditions. Following the notation in \cref{implementation}, $V_h\subseteq H^1_0(\Omega)$ denotes a conforming FE space and the approximation $u_h\in S_t \otimes V_h$ of the solution is computed by solving 
\begin{equation*}
	((u_h)_t, v_h)_{L^2(I\times\Omega)} + (\alpha \nabla u_h, \nabla v_h)_{L^2(I\times\Omega)} = (f,v_h)_{L^2(I\times\Omega)} \quad \forall v_h \in Q_t \otimes V_h.
\end{equation*}
Well-posedness of the corresponding continuous problem has, e.g., been shown in \cite{SS09} by proving the inf-sup condition and surjectivity of the operator associated with the bilinear form. To compute an approximation of the global GFEM solution $u_\mathrm{GFEM}$, we assume that for each local subdomain $I\times\Omega^{in}_i$, $i=1,\ldots,M$, a reduced approximation space $\spann\lb \chi^i_1,\ldots, \chi^i_{N_i^{red}} \rb \subseteq S_t\otimes V_h\vert_{\Omega_i^{in}}$ of dimension $N_i^{red}$ was computed using either Algorithm 1 in subsection SM2.1 (deterministic approach based on a Krylov subspace method) or Algorithm 2 in subsection SM2.2 (randomized approach).
We choose partition of unity hat functions $\phi_{PU}^1,\ldots,\phi_{PU}^M$ as the basis functions of the linear FE space associated with the decomposition $\Omega = \cup_{i=1}^M \Omega^{in}_i$. Then, for every local reduced basis $\chi^i_1,\ldots, \chi^i_{N_i^{red}} $ and corresponding hat function $\phi_{PU}^i$ we compute their (discrete) pointwise product denoted by $ \chi^{i,PU}_1,\ldots,\chi^{i,PU}_{N_i^{red}}\in S_t \otimes V_{h,i,0}^{in}$, where $V_{h,i,0}^{in}:=\lb w \in V_h\vert_{\Omega_i^{in}} \mid w = 0 \text{ on } \partial\Omega_i^{in} \rb $. To define a test space for the Petrov-Galerkin $\mathrm{GFEM}$ approximation, we generate test functions $\varphi^i_1,\ldots, \varphi^i_{N_i^{red}}\in Q_t \otimes V_{h,i,0}^{in}$ by computing projections of the reduced ansatz functions $\chi^{i,PU}_1,\ldots,\chi^{i,PU}_{N_i^{red}}$ as follows: Find $\varphi_j^i\in Q_t \otimes V_{h,i,0}^{in}$, $1 \leq j \leq N_i^{red}$, such that
\begin{align*}
\begin{split}
((w_h)_t&,\varphi_j^i)_{L^2(I\times\Omega_i^{in})} + (\alpha\nabla w_h, \nabla \varphi_j^i)_{L^2(I\times\Omega_i^{in})}\\
& = ((w_h)_t,\chi^{i,PU}_j)_{L^2(I\times\Omega_i^{in})} + (\alpha\nabla w_h, \nabla \chi^{i,PU}_j)_{L^2(I\times\Omega_i^{in})}\quad \forall\, w_h \in S_t \otimes V_{h,i,0}^{in}.
\end{split}
\end{align*}
The ansatz and test space for the Petrov-Galerkin GFEM approximation are given by $X_h^{\mathrm{GFEM}} :=\spann\lb
\chi^{1,PU}_1,\ldots,\chi^{1,PU}_{N_1^{red}}, \chi^{2,PU}_1,\ldots,\chi^{2,PU}_{N_2^{red}}, \ldots, \chi^{M,PU}_1,\ldots,\chi^{M,PU}_{N_M^{red}}\rb $ and $V_h^\mathrm{GFEM} := \spann\lb\varphi^1_1,\ldots,\varphi^1_{N_1^{red}},\varphi^2_1,\ldots,\varphi^2_{N_2^{red}},\ldots,\varphi^M_1,\ldots,\varphi^M_{N_M^{red}} \rb$, respectively.\footnote{For this choice of $V_h^\mathrm{GFEM}$ we cannot guarantee stability in general, but \cref{table_inf_sup} shows that the inf-sup constant $\beta$ is close to one for our numerical experiments. However, a well-known strategy to ensure stability is to construct optimally stable pairs of trial and test spaces (see, for instance, \cite{BS19,DH12}).} Finally, we define $u_h^\mathrm{GFEM} \in X_h^\mathrm{GFEM}$ as the solution of the projected problem
\begin{align*}
((u_h^\mathrm{GFEM})_t,v_h)_{L^2(I\times\Omega)} + (\alpha \nabla u_h^\mathrm{GFEM}, \nabla v_h)_{L^2(I\times\Omega)} = (f,v_h)_{L^2(I\times\Omega)} \quad \forall\, v_h \in V_h^\mathrm{GFEM}.
\end{align*}

\begin{remark}[Computational complexity]
	Algorithm 3 in section SM3 summarizes the global GFEM approximation using either deterministic or randomized local basis generation. The algorithm enables a localized construction of the local ansatz spaces such that the global GFEM approximation satisfies a desired global error tolerance that is prescribed by the user.
	
	To compute a global approximation, we first construct $M$ local approximation spaces, which has a computational complexity of $\mathcal{O}(N_i^2 + (m_i+N_i^{red}+2) N_i^{4/3})$ in the deterministic case and $\mathcal{O}(N_i^2 + (n_t+N_i^{red}+2) N_i^{4/3})$ in the randomized setting (cf. \cref{remark_complexity,remark_comp_complexity} and subsections SM2.3 and SM3.1) for $i=1,\ldots,M$. Here, $N_i$ denotes the number of degrees of freedom on the oversampling subdomain $I\times\Omega_i^{out}$, $m_i\geq N_i^{red}$ is the number of eigenvalues and eigenvectors computed via a Krylov subspace method, and $n_t$ denotes the number of test vectors used in the randomized approach. We highlight that the computations for the local bases construction are embarrassingly parallel. If we employ random sampling to generate the local bases, also the computations for each subdomain can be parallelized.
	
	The reduced global system of size $N_{gl}^{red}\times N_{gl}^{red}$, where $N_{gl}^{red}:=\sum_{i=1}^{M}N_i^{red}$, has a block sparsity pattern resulting from the overlap of neighboring local subdomains and is dense within each block. To solve the global system one can employ, for instance, a preconditioned conjugate gradient method. The computational complexity for each iteration of the conjugate gradient method depends on the number of non-zero entries in the global reduced system matrix, which scales quadratically in the dimension of the local reduced spaces and linearly in the number of local subdomains, and is thus less than $\mathcal{O}((N_{gl}^{red})^2)$, but more than $\mathcal{O}(N_{gl}^{red})$ (for more details see subsection SM3.1). As the matrix resulting from the global GFEM approximation may have a large condition number, using a preconditioner is often mandatory. For more details on the computational complexity of the global approximation we refer to subsection SM3.1.
	
	Moreover, \cite{B19} provides a numerical experiment concerning the signal integrity simulation in a high frequency printed circuit board, where the dimension of the global FEM space is approximately $65$ million and a supercomputer would thus be required to solve the global system. A localized approach as suggested here allows to tackle such problems using common computer architectures and enables to parallel local computations.
\end{remark}


\section{Numerical Experiments}
\label{numerical_experiments}

In this section we numerically analyze the approximation properties of the (quasi-)\\optimal local reduced spaces and the global GFEM approximation introduced in \cref{local_appr_spaces,impl_and_random,global_approximation} and demonstrate that our approach is capable of approximating problems that include coefficients that are rough with respect to both space and time. In \cref{local_numerics} we focus on the local transfer eigenvalue problem and show the rapid decay of the eigenvalues for two model problems with high contrast and multiscale structure with respect to space and time. Moreover, we construct global GFEM approximations from the local reduced spaces using randomization and consider two test cases in \cref{global_numerics}, where one includes high contrast regarding the spatial and temporal variables. We demonstrate that the local approximation qualities carry over to the global approximation and that we can achieve a desired global error tolerance only by local computations. In all numerical experiments we equip the spaces of traces on the local oversampling boundaries with the corresponding $L^2$-inner products weighted with the diffusion coefficient $\alpha$. The complete source code to reproduce all results shown in this section is provided in \cite{code}.

\subsection{Analysis of the transfer eigenvalue problem}
\label{local_numerics}

\begin{figure}
	\begin{minipage}{0.49\textwidth}
		\centering
		\begin{tikzpicture} 
		\begin{semilogyaxis}[
		width=0.78\textwidth,
		height=4.5cm,
		xmin=0,
		xmax=1000,
		ymin=2e-4,
		ymax=2e0,
		xlabel=local basis size $n$,
		ylabel= $\sqrt{\lambda_{n+1}}$,
		grid=both,
		grid style={line width=.1pt, draw=gray!20},
		major grid style={line width=.2pt,draw=gray!50},
		xtick = {200,400,600,800},
		ytick={1e-3,1e-2,1e-1,1e0},
		minor xtick = {100,200,300,400,500,600,700,800,900,1000},
		minor ytick={1e-3,1e-2,1e-1,1e0},
		legend pos= outer north east,
		legend style = {font=\footnotesize},
		label style={font=\footnotesize},
		tick label style={font=\footnotesize}  
		]
		\addplot+[ thick, mark=x, mark repeat = 100] table[x expr=\coordindex,y index=3]{local_svd_0-3channels_1layer.dat};
		\addplot+[ thick, mark=triangle, mark repeat = 100] table[x expr=\coordindex,y index=2]{local_svd_0-3channels_1layer.dat};
		\addplot+[thick, mark=o, mark repeat = 100] table[x expr=\coordindex,y index=1]{local_svd_0-3channels_1layer.dat};
		\addplot+[thick, mark=none] table[x expr=\coordindex,y index=0]{local_svd_0-3channels_1layer.dat};
		\draw[domain=200:800,thick, densely dotted, variable = \x] plot ({\x}, {0.3*exp(-0.0085*\x)}) node[below left,rotate=-38,yshift=0.075cm] {\footnotesize$e^{-0.0085 n}$};
		\legend{$3$, $2$, $1$, $0$}
		\end{semilogyaxis}
		\end{tikzpicture}
		\captionsetup{width=0.95\linewidth}
		\captionof{figure}{\footnotesize
			Singular value decay for Example 1: $0$, $1$, $2$, or $3$ high conductivity channels and $1$ layer of oversampling.
		}
		\label{local_channels_comparison}
	\end{minipage}
	\hfill
	\begin{minipage}{0.49\textwidth}
		\centering
		\begin{tikzpicture} 
		\begin{semilogyaxis}[
		width=0.78\textwidth,
		height=4.5cm,
		xmin=0,
		xmax= 1000,
		ymin=1e-6,
		ymax=5e0,
		xlabel=local basis size $n$,
		ylabel= $\sqrt{\lambda_{n+1}}$,
		grid=both,
		grid style={line width=.1pt, draw=gray!20},
		major grid style={line width=.2pt,draw=gray!50},
		xtick = {200,400,600,800},
		ytick={1e-6,1e-4,1e-2,1e0},
		minor xtick = {100,200,300,400,500,600,700,800,900,1000},
		minor ytick={1e-6,1e-5,1e-4,1e-3,1e-2,1e-1,1e0},
		legend pos= outer north east,
		legend style = {font=\footnotesize},
		label style={font=\footnotesize},
		tick label style={font=\footnotesize}  
		]
		\addplot+[thick, mark=x, mark repeat = 100] table[x expr=\coordindex,y index=0]{local_svd_1channel_diff_layers.dat};
		\addplot+[thick, mark=triangle, mark repeat = 100] table[x expr=\coordindex,y index=1]{local_svd_1channel_diff_layers.dat};
		\addplot+[thick, mark=o, mark repeat = 100] table[x expr=\coordindex,y index=2]{local_svd_1channel_diff_layers.dat};
		\addplot+[thick, mark=none] table[x expr=\coordindex,y index=3]{local_svd_1channel_diff_layers.dat};
		\draw[domain=200:800,thick, densely dotted, variable = \x] plot ({\x}, {0.1*exp(-0.013*\x)}) node[below left,rotate=-35,yshift=0.075cm] {\footnotesize$e^{-0.013 n}$};
		\draw[domain=350:950,thick, densely dotted, variable = \x] plot ({\x}, {10*exp(-0.0044*\x)}) node[above left,rotate=-12,yshift=-0.035cm] {\footnotesize$e^{-0.0044 n}$};
		\legend{$0.5$,$1$, $1.5$, $2$}
		\end{semilogyaxis}
		\end{tikzpicture}
		\captionsetup{width=0.95\linewidth}
		\captionof{figure}{\footnotesize
			Singular value decay for Example 1: $1$ high conductivity channel and $0.5$, $1$, $1.5$, or $2$ layers of oversampling.
		}
		\label{local_channels_oversampling}
	\end{minipage}
\end{figure}

\begin{figure}
	\centering 
	\includegraphics[width=\textwidth]{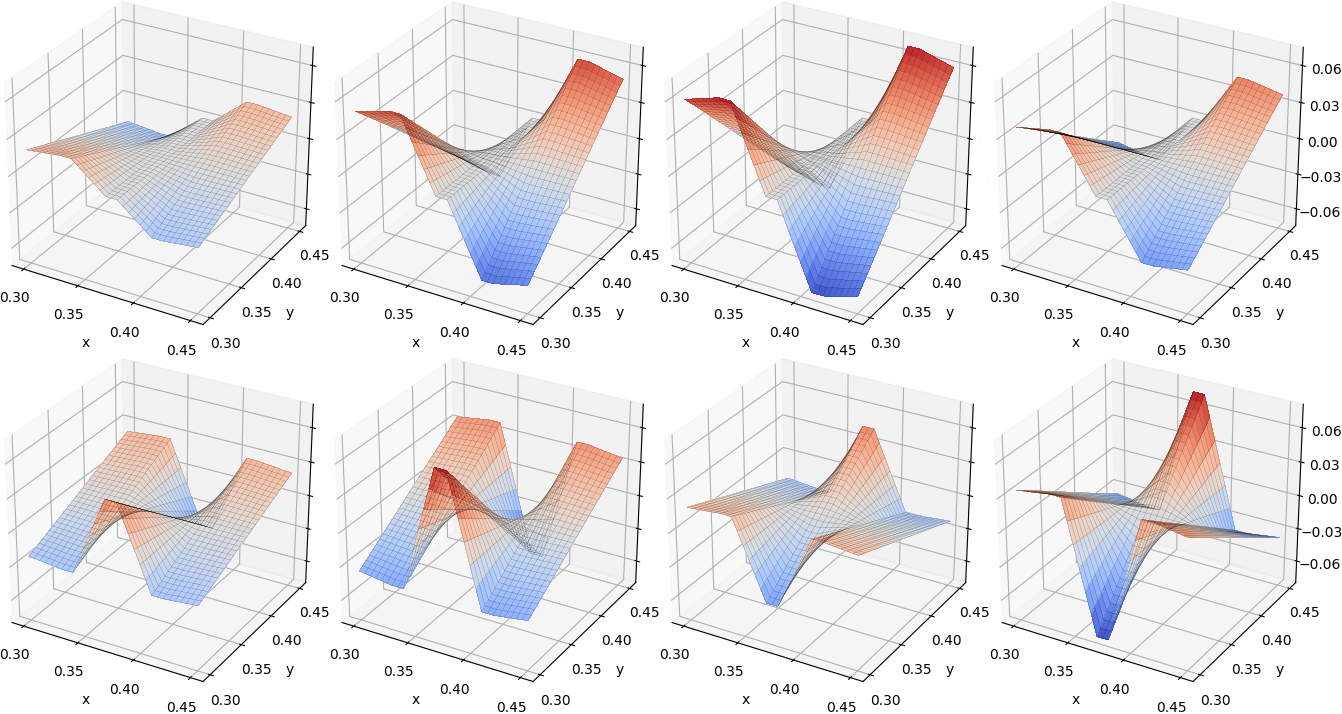}
	\captionsetup{width=0.98\linewidth}
	\captionof{figure}{\footnotesize Eigenfunctions corresponding to $\sqrt{\lambda_{50}}$ (top) and $\sqrt{\lambda_{101}}$ (bottom) on $\Omega^{in}$ at time $t=0.1$, $t=0.3$, $t=0.6$, and $t=0.8$ (left to right) for Example 1 with 3 high conductivity channels and 1 layer of oversampling.}	
	\label{local_channel_eigenfuncs}	
\end{figure}

To analyze the transfer eigenvalue problem \cref{EW_Transfer_int}, we first consider a test case including high conductivity channels with high contrast and refer to this numerical example as Example 1. We choose $I=(0,1)$, $\Omega^{in}=(0.3,0.45)^2$, and study different sizes of oversampling, ranging from $0.5$ to $2$ layers; in detail $\Omega^{out} \in \lb (0.225,0.525)^2, (0.15,0.6)^2 ,(0.075,0.675)^2 , (0,0.75)^2 \rb $. We discretize the local domains with a regular quadrilateral mesh with mesh size $1/200$ in both directions and the time interval by $50$ equidistant time steps. For numerical accuracy we employ in this subsection an implicit Euler time stepping. Furthermore, we consider zero to three high conductivity channels positioned at $I \times((0.33,0.34)\cup(0.37,0.38)\cup(0.41,0.42))\times(0,0.75)$, where $\alpha(t,x,y)=10^3$ in the channels and $\alpha(t,x,y)=1$ else. \cref{local_channels_comparison} shows that the singular values of the transfer operator first have a plateau and then decay exponentially, where the plateau is longer for a larger number of channels. This is due to the fact that the eigenfunctions corresponding to the singular values located in the plateau contain most energy in the local domain $\Omega^{in}$ and one can observe variations within the channels as exemplarily shown in \cref{local_channel_eigenfuncs}. The more channels, the more variations are possible. However, the exponential decay of the singular values is faster if the plateau is longer as already observed for the Helmholtz equation in \cite{BS18}. Furthermore, we see in \cref{local_channels_oversampling} that the singular values decay faster if we increase the number of oversampling layers, where simultaneously the computational costs increase. However, if we compare the  eigenfunctions corresponding to different oversampling sizes we can observe that these show very similar dynamics. We therefore conjecture that less oversampling is already sufficient to extract the significant modes.

\begin{figure}
	\begin{minipage}{0.385\textwidth}
		\centering
		\begin{tikzpicture} 
		\begin{semilogyaxis}[
		width=\textwidth,
		height=4.5cm,
		xmin=0,
		xmax=1000,
		ymin=2e-5,
		ymax=1e0,
		xlabel=local basis size $n$,
		ylabel= $\sqrt{\lambda_{n+1}}$,
		grid=both,
		grid style={line width=.1pt, draw=gray!20},
		major grid style={line width=.2pt,draw=gray!50},
		xtick = {200,400,600,800},
		ytick={1e-4,1e-3,1e-2,1e-1,1e0},
		minor xtick = {100,200,300,400,500,600,700,800,900,1000},
		minor ytick={1e-3,1e-2,1e-1,1e0},
		legend style={at={(0.98,0.98)},anchor=north east,font=\footnotesize},
		label style={font=\footnotesize},
		tick label style={font=\footnotesize}  
		]
		\addplot+[thick,  black , mark = none] table[x expr=\coordindex,y index=0]{local_eps_1_0.01.dat};
		\addplot+[densely dashed, red, thick, mark=x, mark repeat = 100, mark options = {solid}] table[x expr=\coordindex,y index=1]{local_eps_1_0.01.dat};
		\draw[domain=200:800,thick, densely dotted, variable = \x] plot ({\x}, {0.2*exp(-0.011*\x)}) node[below left,rotate=-40,yshift=0.075cm] {\footnotesize$e^{-0.011 n}$};
		\legend{$\varepsilon = 1$, $\varepsilon = 0.01$}
		\end{semilogyaxis}
		\end{tikzpicture}
		\captionsetup{width=0.95\linewidth}
		\captionof{figure}{\footnotesize
			Singular value decay for Example 2: $\varepsilon = 1$ vs. $\varepsilon=0.01$.
		}
		\label{local_multiscale}
	\end{minipage}
	\hfill
	\begin{minipage}{0.605\textwidth}
		\centering
		\vspace{0.1cm}
		\includegraphics[width=0.99\textwidth]{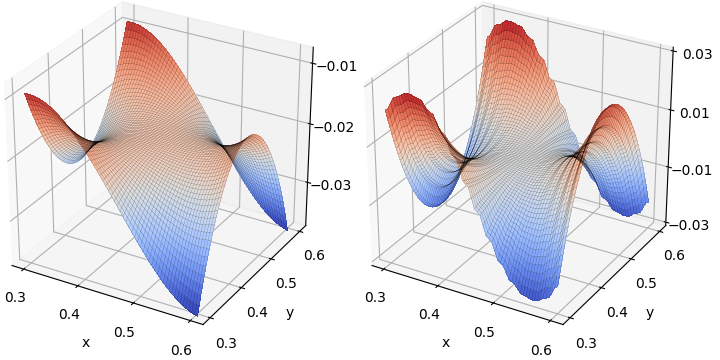}
		\captionsetup{width=0.99\linewidth}
		\captionof{figure}{\footnotesize Eigenfunctions corresponding to $\sqrt{\lambda_{200}}$ on $\Omega^{in}$ at time $t\mspace{-4mu}=\mspace{-4mu}0.3$ for $\varepsilon\mspace{-4mu} =\mspace{-4mu} 1$(left) vs. $\varepsilon\mspace{-4mu}=\mspace{-4mu}0.01$(right) for Example 2.}	
		\label{local_multiscale_eigenfunction}	
	\end{minipage}
\end{figure}

Next, we investigate a model problem including multiscale structure with respect to space and time, which we denote as Example 2. We consider $I=(0,0.4)$, $\Omega^{in} = (0.3,0.6)^2$, and $\Omega^{out} = (0,0.9)^2$ (1 layer of oversampling). Moreover, we use a regular quadrilateral mesh with mesh size $1/200$ in both directions for the spatial discretization and a step size of $1/100$ for the implicit Euler time stepping. We consider $\alpha_\varepsilon(t,x,y)= 10 + 8 \cos(\pi x/\varepsilon) + \cos(\pi t/\varepsilon)$ and compare the cases $\varepsilon = 1$ (only coarse scale) and $\varepsilon = 0.01$ (additionally including fine scale). In \cref{local_multiscale} we observe that the rapid singular value decay is almost identical for $\varepsilon = 1$ and $\varepsilon = 0.01$. Therefore, we can conclude that at least for this test case we do not require a higher number of local reduced basis functions for the approximation of the more complex problem including multiscale behavior regarding space and time. Finally, \cref{local_multiscale_eigenfunction} exemplarily shows that, as expected, the eigenfunctions inherit the multiscale structure of the problem.

\subsection{Global GFEM approximation with random local basis generation}
\label{global_numerics}

In this subsection we analyze the global GFEM approximation constructed from local reduced spaces that were generated using randomization to enable a more efficient computation; in detail we used Algorithms 2 and 3 in subsection SM2.2 and section SM3.\footnote{Following the notation in subsection SM2.2 and section SM3 we use a local and global failure probability of $\epsalgo = \epsfail = 10^{-15}$ and $n_t = 20$ test vectors. For an intense study on how the results of the randomized algorithm depend on parameters such as the number of test vectors see \cite{BS18}.} Throughout the whole subsection we consider the time interval $I = (0,0.5)$ and the global spatial domain $\Omega = (0,5)^2$ decomposed into $4\times 4$ local domains of size $2\times2$ with an overlap of size $1$. We also use an oversampling size of $1$. To ensure that the CFL condition for the numerical accuracy of the space-time Petrov-Galerkin discretization is satisfied (cf. \cite{A12}), we discretize the computational domain either with a regular quadrilateral mesh with mesh size $1/10$ in both directions and time step size $1/400$ (discretization 1) or with mesh size $1/15$ and time step size $1/900$ (discretization 2).

\begin{figure}
	\begin{minipage}{0.28\textwidth}
		\vspace{0.65cm}
		\centering 
		\includegraphics[width=0.88\textwidth]{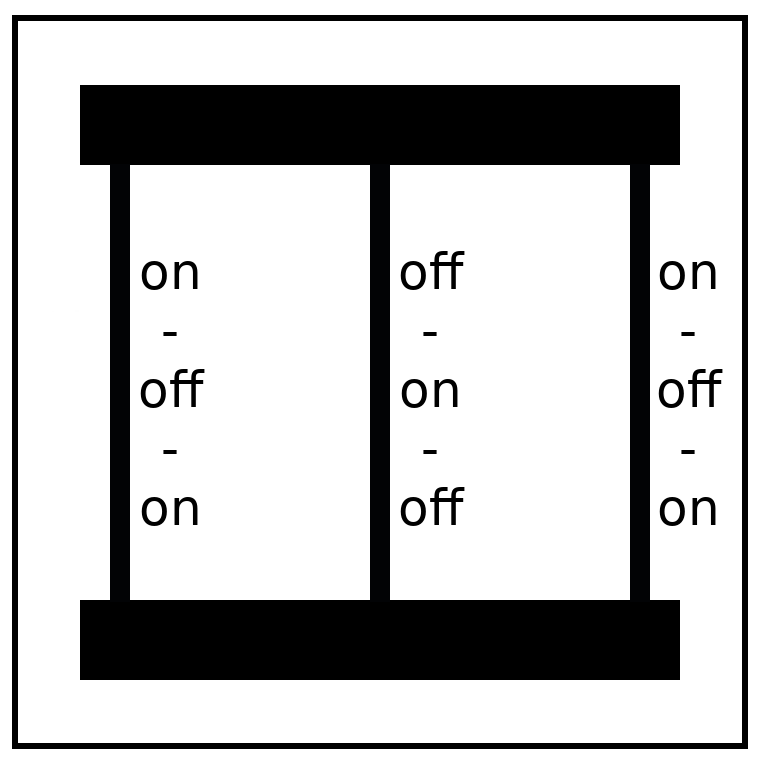}
		\captionsetup{width=0.99\linewidth}
		\captionof{figure}{\footnotesize Coefficient field for Example 4. White equates to $10^{-2}$ and black to $1$ (when channel is on) or $10^{-2}$ (when channel is off).}	
		\label{global_channel_coeff}	
	\end{minipage}
	\hfill
	\begin{minipage}{0.68\textwidth}
		\centering
		\begin{tikzpicture}
		\begin{loglogaxis}[
		title={Example 3.1},
		width=0.59\textwidth,
		height=4.5cm,
		xmin=0.7,
		xmax=1000,
		ymin=1e-6,
		ymax=5e-1,
		xlabel=local basis size $n$,
		grid=both,
		grid style={line width=.1pt, draw=gray!20},
		major grid style={line width=.2pt,draw=gray!50},
		xtick = {1e0,1e1,1e2,1e3},
		ytick={1e-6,1e-5,1e-4,1e-3,1e-2,1e-1},
		minor xtick = {1e0,1e1,1e2,1e3},
		minor ytick={1e-6,1e-5,1e-4,1e-3,1e-2,1e-1},
		legend pos= south west,
		legend style = {font=\footnotesize},
		label style={font=\footnotesize},
		tick label style={font=\footnotesize}  
		]
		\addplot+[thick, mark=none] table[x expr=\coordindex,y index=0]{slowest_svd_ex_1.dat};
		\addplot+[solid,          black, thick, mark=triangle, mark repeat=1] table[x index=0, y index=1] {local_global_ex_1.dat};
		\addplot+[densely dashed, red, thick, mark=x, mark repeat=1, mark options={solid}] table[x index=0, y index=2] {local_global_ex_1.dat};
		\legend{svd,local,global}
		\end{loglogaxis}
		\end{tikzpicture}
		\hspace{-0.5cm}
		\begin{tikzpicture}
		\begin{loglogaxis}[
		title={Example 4},
		width=0.59\textwidth,
		height=4.5cm,
		xmin=0.7,
		xmax= 200,
		ymin=1e-6,
		ymax=5e-1,
		xlabel=local basis size $n$,
		yticklabels={,,},
		grid=both,
		grid style={line width=.1pt, draw=gray!20},
		major grid style={line width=.2pt,draw=gray!50},
		xtick = {1e0,1e1,1e2},
		ytick={1e-6,1e-5,1e-4,1e-3,1e-2,1e-1},
		minor xtick = {1e0,1e1,1e2},
		minor ytick={1e-6,1e-5,1e-4,1e-3,1e-2,1e-1},
		label style={font=\footnotesize},
		tick label style={font=\footnotesize}  
		]
		\addplot+[solid,          black, thick, mark=triangle, mark repeat=1] table[x index=0, y index=1] {local_global_ex_2.dat};
		\addplot+[densely dashed, red, thick, mark=x, mark repeat=1, mark options={solid}] table[x index=0, y index=2] {local_global_ex_2.dat};
		\end{loglogaxis}
		\end{tikzpicture}
		\captionsetup{width=0.95\linewidth}
		\captionof{figure}{\footnotesize
			The slowest singular value decay, the maximum relative local error, and the relative global error. Median values over 20 realizations.
		}
		\label{global_basis_size}
	\end{minipage}
\end{figure}

We analyze two test cases: First, we consider a constant coefficient $\alpha\equiv 1$ with right hand side $f(t,x,y)=[\pi \cos(\pi t) + 2\pi^2/25 \sin(\pi t)] \sin(\pi x /5) \sin(\pi y /5)$ and homogeneous initial and Dirichlet boundary conditions. Hence, the analytical solution $u(t,x,y) = \sin(\pi t) \sin(\pi x /5 ) \sin(\pi y /5)$ is known. We refer to this numerical example as Example 3.1 (discretization 1) and 3.2 (discretization 2) depending on which discretization is used. Secondly, we investigate a model problem including high contrast with respect to space and time in terms of high conductivity channels that are switched on and off over time (Example 4 in the following). To this end, we define a heating region $\Gamma_{heat}:=I\times(0.4,4.6)\times(4,4.6)$, a cooling region $\Gamma_{cool}:= I\times(0.4,4.6)\times (0.4,1)$, and a channel region $\Gamma_{channel} := [((0,0.2)\cup (0.35,0.5)) \times (0.6,0.8)\times (1,4)] \cup  [(0.15,0.45)\times (2.4,2.6)\times(1,4)] \cup [((0,0.2)\cup (0.35,0.5))\times (4.2,4.4)\times (1,4) ]$ as depicted in \cref{global_channel_coeff}. Note that the channels are switched on and off at certain time points. We assume that $\alpha(t,x,y)=1$ for $(t,x,y)\in \Gamma_{heat}\cup \Gamma_{cool} \cup \Gamma_{channel}$, $\alpha(t,x,y)=10^{-2}$ else, $f(t,x,y)=1$ for $(t,x,y)\in \Gamma_{heat}$, $f(t,x,y)=-1$ for $(t,x,y)\in \Gamma_{cool}$, and $f(t,x,y)= 0$ else. Moreover, we prescribe homogeneous initial and Dirichlet boundary conditions and use discretization 2.

\begin{table}
	\centering
	\scriptsize
	\begin{tabular}{|c|c|c|c|c|c|c|c|c|c|c|c|}
		\hline
		local basis size $n$& 
		10  &
		25  & 
		50  & 
		100 &
		200 &
		300 &
		400 &
		500 &
		600 &
		700 &
		800\\
		\hline
		inf-sup constant $\beta$ 
		&  0.99
		&  0.99
		&  0.99
		&  0.99
		&  0.99
		&  0.99
		&  0.99
		&  0.98
		&  0.95
		&  0.91
		&  0.87 \\
		\hline
	\end{tabular}
	\captionsetup{width=0.9\linewidth}
	\vspace{0.1cm}
	\captionof{table}{\footnotesize Global reduced inf-sup constant $\beta$ exemplary for one realization of Example 3.1.}
	\label{table_inf_sup}
	\vspace{-0.2cm}
\end{table}

\begin{figure}
	\centering
	\begin{tikzpicture}
	\begin{loglogaxis}[
	title={Example 3.1},
	width=0.4\textwidth,
	height=4.5cm,
	xmin=190,
	xmax=850,
	ymin=3.5e-6,
	ymax=2.2e-4,
	xlabel=local basis size $n$,
	grid=both,
	grid style={line width=.1pt, draw=gray!20},
	major grid style={line width=.2pt,draw=gray!50},
	xtick = {200,300,400,500,600,700,800},
	xticklabels = {200,300,400, ,600, ,800},
	ytick={1e-6,1e-5,1e-4,1e-3},
	legend pos= outer north east,
	legend style = {font=\footnotesize},
	label style={font=\footnotesize},
	tick label style={font=\footnotesize}  
	]
	\addplot+[densely dotted,  black, thick, mark=none] table[x index=0, y index=3] {local_global_ex_1_cutout.dat};
	\addplot+[solid,  black, thick, mark=triangle, mark repeat=1] table[x index=0, y index=2] {local_global_ex_1_cutout.dat};
	\addplot+[densely dotted,  black, thick, mark=none] table[x index=0, y index=1] {local_global_ex_1_cutout.dat};
	\addplot+[dotted,  blue, thick, mark=none] table[x index=0, y index=6] {local_global_ex_1_cutout.dat};
	\addplot+[dashed,  red, thick, mark=x, mark repeat=1,mark options=solid] table[x index=0, y index=5] {local_global_ex_1_cutout.dat};
	\addplot+[dotted,  blue, thick, mark=none] table[x index=0, y index=4] {local_global_ex_1_cutout.dat};
	\legend{local max, local median, local min, global max, global median, global min}
	\end{loglogaxis}
	\end{tikzpicture}
	\begin{tikzpicture}
	\begin{loglogaxis}[
	title={Example 3.2},
	width=0.4\textwidth,
	height=4.5cm,
	xmin=0.7,
	xmax=1000,
	ymin=1e-6,
	ymax=2e-1,
	xlabel=local basis size $n$,
	grid=both,
	grid style={line width=.1pt, draw=gray!20},
	major grid style={line width=.2pt,draw=gray!50},
	xtick = {1e0,1e1,1e2,1e3},
	ytick={1e-6,1e-5,1e-4,1e-3,1e-2,1e-1},
	minor xtick = {1e0,1e1,1e2,1e3},
	minor ytick={1e-6,1e-5,1e-4,1e-3,1e-2,1e-1},
	legend style={at={(1.2,1)},anchor=north east,font=\footnotesize},
	label style={font=\footnotesize},
	tick label style={font=\footnotesize}  
	]
	\addplot+[solid,          black, thick, mark=triangle, mark repeat=1] table[x index=0, y index=1] {local_global_ex_1_finer.dat};
	\addplot+[densely dashed, red, thick, mark=x, mark repeat=1, mark options={solid}] table[x index=0, y index=2] {local_global_ex_1_finer.dat};
	\legend{local,global}
	\end{loglogaxis}
	\end{tikzpicture}
	\captionsetup{width=0.98\linewidth}
	\captionof{figure}{\footnotesize
		The minimum, median, and maximum relative local and global error over 20 realizations for Example 3.1 (left) versus the maximum relative local error and the relative global error for one realization of Example 3.2 (right). 
	}
	\label{global_basis_size_cutout}
	
\end{figure}
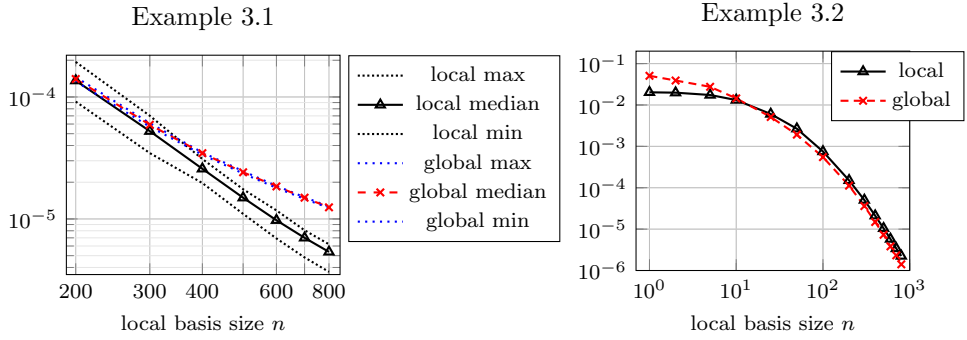

\begin{figure}
	\centering
	\begin{tikzpicture}
	\begin{loglogaxis}[
	title={Example 3.1},
	x dir=reverse,
	width=0.4\textwidth,
	height=4.5cm,
	xmin=7e-3,
	xmax=1.5e1,
	ymin=5e-7,
	ymax=2e-2,
	legend pos=outer north east,
	xlabel=$\varepsilon$,
	grid=both,
	grid style={line width=.1pt, draw=gray!20},
	major grid style={line width=.2pt,draw=gray!70},
	xtick={1e-2,1e-1,1e0, 1e1},
	ytick={1e-6,1e-5, 1e-4, 1e-3, 1e-2},
	minor xtick={1e-2,1e-1, 1e0, 1e1, 1e2},
	minor ytick={1e-6,1e-5, 1e-4, 1e-3, 1e-2},
	ylabel= relative local error,
	label style={font=\footnotesize},
	tick label style={font=\footnotesize}  
	]
	\addplot+[densely dotted, blue, thick,  mark=none] table[x index=0, y index=5] {local_tolerances_ex_1.dat};
	\addplot+[densely dashed, blue, thick,  mark=none] table[x index=0, y index=4] {local_tolerances_ex_1.dat};
	\addplot+[solid,          black, thick, mark=none] table[x index=0, y index=3] {local_tolerances_ex_1.dat};
	\addplot+[densely dashed, brown, thick, mark=none] table[x index=0, y index=2] {local_tolerances_ex_1.dat};
	\addplot+[densely dotted, brown, thick, mark=none] table[x index=0, y index=1] {local_tolerances_ex_1.dat};
	\end{loglogaxis}
	\end{tikzpicture}
	\begin{tikzpicture}
	\begin{loglogaxis}[
	title={Example 4},
	x dir=reverse,
	width=0.4\textwidth,
	height=4.5cm,
	xmin=1.6e-3,
	xmax=1.5e1,
	ymin=5e-7,
	ymax=2e-2,
	legend pos=outer north east,
	legend style = {font=\footnotesize},
	xlabel=$\varepsilon$,
	yticklabels={,,},
	grid=both,
	grid style={line width=.1pt, draw=gray!20},
	major grid style={line width=.2pt,draw=gray!70},
	xtick={1e-2,1e-1,1e0,1e1},
	ytick={1e-6,1e-5, 1e-4, 1e-3, 1e-2,1e-1},
	minor xtick={1e-2,1e-1,1e0,1e1},
	minor ytick={1e-6,1e-5, 1e-4, 1e-3, 1e-2,1e-1},
	label style={font=\footnotesize},
	tick label style={font=\footnotesize}  
	]
	\addplot+[densely dotted, blue, thick,  mark=none] table[x index=0, y index=5] {local_tolerances_ex_2.dat};
	\addplot+[densely dashed, blue, thick,  mark=none] table[x index=0, y index=4] {local_tolerances_ex_2.dat};
	\addplot+[solid,          black, thick, mark=none] table[x index=0, y index=3] {local_tolerances_ex_2.dat};
	\addplot+[densely dashed, brown, thick, mark=none] table[x index=0, y index=2] {local_tolerances_ex_2.dat};
	\addplot+[densely dotted, brown, thick, mark=none] table[x index=0, y index=1] {local_tolerances_ex_2.dat};
	\legend{max, 75 percentile, 50 percentile, 25 percentile, min};
	\end{loglogaxis}
	\end{tikzpicture}
	\captionsetup{width=0.98\linewidth}
	\captionof{figure}{\footnotesize Relative local error versus local error tolerance $\varepsilon$. Statistics over 20 samples and all 16 local spaces.
	}
	\label{global_tolerances_local}
\end{figure}
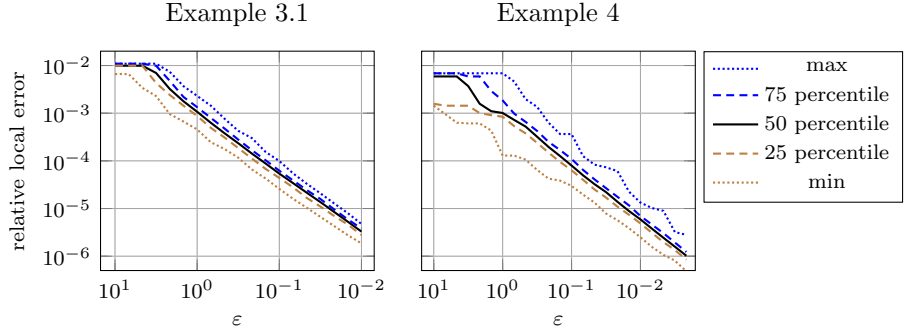

\begin{figure}
	\begin{minipage}{0.72\textwidth}	
		\centering
		\begin{tikzpicture}
		\begin{loglogaxis}[
		title={Example 3.1},
		x dir=reverse,
		width=0.53\textwidth,
		height=4.5cm,
		xmin=3.5e-1,
		xmax=7e2,
		ymin=1e-6,
		ymax=1e-1,
		legend pos=outer north east,
		xlabel=$\tolGFEM$,
		grid=both,
		grid style={line width=.1pt, draw=gray!20},
		major grid style={line width=.2pt,draw=gray!70},
		xtick={1e0, 1e1, 1e2},
		ytick={1e-6,1e-5, 1e-4, 1e-3, 1e-2,1e-1},
		minor xtick={ 1e0, 1e1, 1e2},
		minor ytick={1e-6,1e-5, 1e-4, 1e-3, 1e-2,1e-1},
		ylabel= relative global error,
		label style={font=\footnotesize},
		tick label style={font=\footnotesize}  
		]
		\addplot+[densely dotted, blue, thick,  mark=none] table[x index=0, y index=3] {global_tolerances_ex_1.dat};
		\addplot+[solid,          black, thick, mark=none] table[x index=0, y index=2] {global_tolerances_ex_1.dat};
		\addplot+[densely dotted, brown, thick, mark=none] table[x index=0, y index=1] {global_tolerances_ex_1.dat};
		\end{loglogaxis}
		\end{tikzpicture}
		\begin{tikzpicture}
		\begin{loglogaxis}[
		title={Example 4},
		x dir=reverse,
		width=0.53\textwidth,
		height=4.5cm,
		xmin=2.5e-2,
		xmax=5.5e2,
		ymin=1e-6,
		ymax=1e-1,
		legend style = {font=\footnotesize,at={(1.25,0.98)}},
		xlabel=$\tolGFEM$,
		yticklabels={,,},
		grid=both,
		grid style={line width=.1pt, draw=gray!20},
		major grid style={line width=.2pt,draw=gray!70},
		xtick={1e-1,1e0,1e1,1e2},
		ytick={1e-6,1e-5, 1e-4, 1e-3, 1e-2,1e-1},
		minor xtick={1e-1,1e0,1e1,1e2},
		minor ytick={1e-6,1e-5, 1e-4, 1e-3, 1e-2,1e-1},
		label style={font=\footnotesize},
		tick label style={font=\footnotesize}  
		]
		\addplot+[densely dotted, blue, thick,  mark=none] table[x index=0, y index=3] {global_tolerances_ex_2.dat};
		\addplot+[solid,          black, thick, mark=none] table[x index=0, y index=2] {global_tolerances_ex_2.dat};
		\addplot+[densely dotted, brown, thick, mark=none] table[x index=0, y index=1] {global_tolerances_ex_2.dat};
		\legend{max, 50 percentile, min};
		\end{loglogaxis}
		\end{tikzpicture}
		\captionsetup{width=0.96\linewidth}
		\captionof{figure}{\footnotesize Relative global error versus global error tolerance $\tolGFEM$. Statistics over 20 samples.
		}
		\label{global_tolerances}
	\end{minipage}
	\hfill
	\begin{minipage}{0.25\textwidth}
		\centering 
		\includegraphics[width=0.95\textwidth]{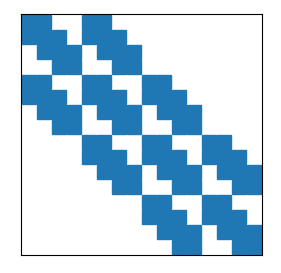}
		\captionsetup{width=0.98\linewidth}
		\captionof{figure}{\footnotesize Sparsity pattern of global reduced matrix.}	
		\label{sparsity_pattern}
	\end{minipage}
\end{figure}

While we cannot guarantee stability of the global $\mathrm{GFEM}$ approximation in general, we observe in \cref{table_inf_sup} that for Example 3.1 the inf-sup constant $\beta$ is close to one and we thus have stability. \cref{global_basis_size} shows that the global error\footnote{\scriptsize$\big(\Vert (u_h - u_h^\mathrm{GFEM})_t \Vert_{L^2(I\times\Omega)}^2 + \Vert \alpha^\frac{1}{2} \nabla (u_h - u_h^\mathrm{GFEM}) \Vert_{L^2(I\times\Omega)}^2 \big)^{1/2}/$ $ \big(\Vert \alpha^\frac{1}{2} \nabla u_h \Vert_{L^2(I\times\Omega)} + \Vert f\Vert_{L^2(I\times\Omega)}\big)$} convergence is clearly guided by the local error\footnote{\scriptsize$\min_{w_i^n \in \Lambda_i^{n,\text{data}}} \Vert \alpha^\frac{1}{2} \nabla (u_h\vert_{I\times\Omega_i^{in}} - w_i^n) \Vert_{L^2(I\times\Omega_i^{in})} / \big(\Vert \alpha^\frac{1}{2} \nabla u_h\vert_{I\times\Omega_i^{out}} \Vert_{L^2(I\times\Omega_i^{out})} + \Vert f\Vert_{L^2(I\times\Omega_i^{out})}\big)$} convergence, which is in turn very similar to the singular value decay. This is in line with the predictions by theory, cf. \cref{error_bounds}. Although we only use $0.5$ layers of oversampling for the computation of the local reduced spaces this seems to be sufficient to yield good approximation properties and extract the significant modes which confirms the conjecture in \cref{local_numerics} at least for the given numerical examples. If we have a closer look at the results for Example 3.1, we observe that for increasing local basis sizes $n\geq400$ the global error behavior seems to change as shown in \cref{global_basis_size_cutout} (left). However, this can be traced back to the coarse spatial discretization since the result for the finer discretization employed in Example 3.2 again shows that the global errors exactly follow the local errors (cf. \cref{global_basis_size_cutout} (right)). For the results shown in \cref{global_tolerances_local,global_tolerances} we used the adaptive randomized Algorithms 2 and 3 introduced in subsection SM2.2 and section SM3\footnote{Since we assume that $u_0=g_D=g_N=0$ for Example 3 and 4 (thus $u^b=0$), we used Algorithm 3 with the improved relative global error bound $ 2 \sqrt{ M^{out}} \, \max_{i=1,\ldots,M} \big\lbrace C_i(c_1,c_2,\beta,M^{in},\Omega_i^{in},c_{p,i}^{\alpha}) $ $\, \max\lb 1, c_{f,i}\rb\,\varepsilon_i \big \rbrace$ (cf. \cref{global_error}) to generate the results shown in \cref{global_tolerances}.} and chose $\beta$ equal to one due to the results observed in \cref{table_inf_sup}. Algorithm 3 enables us to prescribe a global error tolerance and generate an approximation that satisfies the desired tolerance with probability ($1-\epsfail$). In \cref{global_tolerances} we observe that in our numerical experiments the algorithm always succeeded and the same holds true for the outcome of Algorithm 2 (cf.  \cref{global_tolerances_local}). The local\footnote{\scriptsize$\min_{w_i^n \in \Lambda_i^{n,\text{data}}} \Vert \alpha^\frac{1}{2} \nabla (u_h - w_i^n) \Vert_{L^2(I\times\Omega_i^{in})} / \max\lb 2, c_{f,i}\rb \big(\Vert \alpha^\frac{1}{2} \nabla u_h \Vert_{L^2(I\times\Omega_i^{out})} + \Vert f\Vert_{L^2(I \times\Omega_i^{out})}\big)$} (global\footnote{\scriptsize$\big(\Vert (u_h - u_h^\mathrm{GFEM})_t \Vert_{L^2(I\times\Omega)}^2 + \Vert \alpha^\frac{1}{2} \nabla (u_h - u_h^\mathrm{GFEM}) \Vert_{L^2(I\times\Omega)}^2 \big)^{1/2}/$ $ \big(\Vert \alpha^\frac{1}{2} \nabla u_h \Vert_{L^2(I\times\Omega)} + \Vert f\Vert_{L^2(I\times\Omega)}\big)$}) results are more accurate than required by about $2$ to $3.5$ ($4$ to $4.5$) orders of magnitude. The former is due to the fact that Algorithm 2 constructs local spaces that have better approximation properties than required, where the latter is additionally caused by the pessimistic global a priori error bound (cf. \cref{global_error}) that is employed to calculate local error tolerances from the prescribed global error tolerance in Algorithm 3. Finally, \cref{sparsity_pattern} shows the sparsity pattern of the global reduced system matrix, where the block sparsity structure results from the overlap of neighboring local subdomains.


\section{Conclusions}
\label{conclusions}

We have proposed local space-time ansatz spaces for linear parabolic PDEs that are optimal in the sense of Kolmogorov \cite{K36} and can be used in domain decomposition and multiscale methods. The diffusion coefficient of the PDE may be arbitrarily rough with respect to both space and time. The optimal local spaces are spanned by the left singular values of a compact transfer operator that acts on the space of local solutions. Moreover, we have employed a space-time GFEM to couple the local ansatz spaces and construct an approximation of the global solution. Furthermore, we have derived rigorous local and global a priori error bounds. In particular, we have shown that the global approximation error in a suitable graph norm can be bounded only by the local approximation errors in the $L^2(H^1)$-seminorm. Finally, we have proposed an adaptive algorithm for the localized construction of the local ansatz spaces such that the global GFEM approximation satisfies a desired global error tolerance.

The numerical experiments demonstrate a very rapid and exponential decay of the singular values of the transfer operator and the local and global approximation errors for problems with high contrast or multiscale structure regarding both the spatial and the temporal variable. Since in addition no time stepping procedure is required, the computation of the global approximation is very efficient. For a multiscale test case we have observed that the singular values of the transfer operator seem to be independent of the size of the parameter $\varepsilon$ that determines the periodicity of fine-scale variations in the diffusion coefficient. We conjecture that this result transfers to $\varepsilon\rightarrow0$ at least for the considered test case. Moreover, we have observed that the global error convergence is clearly guided by the local error convergence as predicted by theory. Our numerical experiments have shown that the global reduced inf-sup constant is close to one and we thus have stability at least for the considered test cases.

For future applications it is favorable to investigate numerically more efficient discretization techniques such as low rank tensor formats (cf. \cite{GK13} and references therein).


\appendix

\section{Proofs}
\label{proofs}

\subsection{Proofs of \cref{interior}}
\label{proofs_int}

In the remainder of this subsection we denote the $L^2(\Omega^{in})$ inner product by $(\cdot\,,\cdot)_{in}:=(\cdot\,,\cdot)_{L^2(\Omega^{in})}$ and the dual pairing between $H^1_0(\Omega^{in})$ and its dual space $H^{-1}(\Omega^{in})$ by $\la\cdot\,,\cdot\ra_{in}:=\la \cdot\,, \cdot \ra_{H^1_0(\Omega^{in})}$. Analogously, we define $(\cdot\,,\cdot)_{out}:=(\cdot\,,\cdot)_{L^2(\Omega^{out})}$ and $\la\cdot\,,\cdot\ra_{out}:=\la \cdot\,, \cdot \ra_{H^1_0(\Omega^{out})}$.

\begin{proof}[Proof of \cref{par_cacc_int} (Parabolic Caccioppoli inequality)]
	Since $w$ is contained in $\HH^{out}$, we have that $\int_0^T \la w_t(t),v\ra_{out}\, \varphi(t)\; dt + \int_0^T (\alpha \nabla  w(t), \nabla v)_{out} \,\varphi(t) \;dt = 0$ for all $v \in H_0^1(\Omega^{out})$ and $\varphi \in C_0^\infty(I)$. As $\varphi \in C_0^\infty(I)$ is chosen arbitrarily the fundamental lemma of calculus of variations yields that for almost every $t\in (0,T)$
	\begin{align}
	\la w_t(t),v\ra_{out} + (\alpha \nabla w(t), \nabla v)_{out} = 0 \quad \forall\,v\in H^1_0(\Omega^{out}).
	\label{bew_cacc1_int}
	\end{align}
	Let $\eta \in C^1_0(\Omega^{out})$ denote a cut-off function with the following properties: $0 \leq \eta \leq 1$, $\eta=1$ in $\Omega^{in}$, and $|\nabla \eta| \leq \frac{1}{\delta}$. Additionally choose $t^*\in (0,T]$ arbitrarily. In the following we want to use $w \eta^2$ as a test function. However, we cannot apply partial integration in time since the function $w$ is contained in $L^2(I,H^1(\Omega^{out}))$, but not in $ L^2(I,H^1_0(\Omega^{out}))$. We therefore approximate $w\eta\in L^2((0,t^*),H^1_0(\Omega^{out}))$ by a sequence $\left(w_n\right)_{n\in\N}\subseteq C_0^\infty((0,t^*),H^1_0(\Omega^{out}))$ such that $w_n \xlongrightarrow{n \rightarrow \infty} w\eta \,\;\text{ in } \, L^2((0,t^*),H^1_0(\Omega^{out}))$ (to simplify the notation we omit the restriction $w\vert_{(0,t^*)\times \Omega^{out}}$). Then, we have that $w_n(t)\eta \in H^1_0(\Omega^{out})$ for almost every $t \in (0,t^*)$ and each $n\in \N$ and $ \la w_t(t),w_n(t)\eta\ra_{out}  + \la A_\alpha w(t),w_n(t)\eta\ra_{out} = 0$ for almost every $t \in (0,t^*)$ according to \cref{bew_cacc1_int}. Consequently, integrating over time yields $\int_0^{t^*} \la w_t(t),w_n(t)\eta\ra_{out}\, dt + \int_0^{t^*} \la A_\alpha w(t), w_n(t)\eta\ra_{out}\, dt = 0$ for each $n\in\N$. Since $\eta $ does not depend on time, we can infer that
	\begin{align*}
	\int_0^{t^*} \la w_t(t),w_n(t)\eta\ra_{out}  dt
	&= -\int_0^{t^*} ( w(t),\left(w_n(t)\eta\right)_t )_{out}  dt= -\int_0^{t^*} (w(t)\eta,\left(w_n(t)\right)_t)_{out} dt  \\
	&=\hspace{8pt} \int_0^{t^*} \la (w(t)\eta)_t,w_n(t)\ra_{out} dt \,= \int_0^{t^*} \la w_t(t)\eta,w_n(t)\ra_{out}  dt.
	\end{align*}
	This implies that $\int_0^{t^*} \la w_t(t)\eta,w_n(t)\ra_{out} \, dt + \int_0^{t^*} \int_{\Omega^{out}} \alpha \nabla w(t) \nabla(w_n(t)\eta)\;dt = 0$.	As $w_n \xlongrightarrow{n \rightarrow \infty} w\eta$ in $L^2((0,t^*),H^1_0(\Omega^{out}))$ the sequence especially converges weakly in $L^2((0,t^*),H^1_0(\Omega^{out}))$. Hence, we can conclude that 
	\begin{align*}
	\int_0^{t^*} \la w_t(t)\eta,w(t)\eta \ra_{out}\, dt + \int_0^{t^*} \int_{\Omega^{out}} \alpha \vert\nabla w(t)\vert^2 \eta^2 + 2\,\alpha \nabla w(t)\,w(t)\,\eta \nabla \eta \;dt = 0.
	\end{align*}
	Finally, we apply partial integration in time  and exploit $w(0)=0$ in $L^2(\Omega^{out})$ to obtain $\int_0^{t^*} \la w_t(t)\eta,w(t)\eta \ra_{out}\, dt = \frac{1}{2}\Vert w(t^*)\eta\Vert_{L^2(\Omega^{out})}^2 - \frac{1}{2}\Vert w(0)\eta\Vert_{L^2(\Omega^{out})}^2 = \frac{1}{2}\Vert w(t^*)\eta\Vert_{L^2(\Omega^{out})}^2$. By applying the Cauchy-Schwarz and Young's inequality we have that
	\begin{align*}
	\Vert w(t^*)\eta\Vert^2_{L^2(\Omega^{out})}+\int_0^{t^*} \int_{\Omega^{out}} \alpha \vert\nabla w(t)\vert^2\eta^2 dt	\leq 4 \int_0^{t^*}\int_{\Omega^{out}} \alpha\, w(t)^2 \vert\nabla \eta\vert^2 dt.
	\end{align*}
	By exploiting the properties of the cut-off function $\eta$ we infer that
	\begin{align}\label{bew_cacc3_int}
	\Vert w(t^*)\vert_{\Omega^{in}}\Vert^2_{L^2(\Omega^{in})}\mspace{-4mu}+\mspace{-4mu}\Vert \alpha^\frac{1}{2}\nabla (w\vert_{(0,t^*)\times\Omega^{in}})\Vert^2_{L^2((0,t^*)\times\Omega^{in})}\mspace{-4mu}\leq\mspace{-4mu} \frac{4\,\alpha_1}{\delta^2}\int_0^{T} \mspace{-4mu}\int_{\Omega^{out}}\mspace{-4mu} w(t)^2 dt.
	\end{align}
	Bounding each term in \cref{bew_cacc3_int} by $ \frac{4 \alpha_1}{\delta^2}\Vert w \Vert^2_{L^2(I\times\Omega^{out})}$ and taking in both inequalities the supremum over all $t^*\in (0,T]$ yields the estimate.
\end{proof}

\begin{proof}[Proof of \cref{harmonic_prop_int} (Regularity)]
	Let a function $w\in \HH^{in}$ be given. Employing partial integration in time, we have that
	\begin{align*}
	\int_I \la w_t(t)+A_{\alpha} w(t),v \varphi(t)\ra_{in}\,dt = 0\quad \forall\,v\in H^1_0(\Omega^{in}),\,\varphi \in C_0^\infty(I). 
	\end{align*}
	Here, the linear operator $A_\alpha\mspace{-2mu}:\mspace{-2mu}L^2(I,H^1(\Omega^{in}))\mspace{-2mu} \rightarrow\mspace{-2mu} (L^2(I,H^1_0(\Omega^{in})))^*\mspace{-2mu}\cong\mspace{-2mu} L^2(I,H^{-1}(\Omega^{in}))$, $(A_\alpha w)(t):= \tilde{A}_\alpha(w(t))$, is induced by the linear operator $\tilde{A}_\alpha:H^1(\Omega^{in}) \rightarrow H^{-1}(\Omega^{in})$, $\la \tilde{A}_\alpha w, v\ra_{in} := \int_{\Omega^{in}} \alpha \nabla w \nabla v$. Since the space $\lb v\varphi \mid v\in H^1_0(\Omega^{in}),\;\varphi\in C_0^\infty(I)\rb$ is a dense subspace of $L^2(I,H^1_0(\Omega^{in}))$, we apply the Hahn-Banach theorem to infer that
	\begin{align*}
	w_t + A_{\alpha}w = 0 \;\text{ in }  L^2(I,H^{-1}(\Omega^{in})).
	\end{align*}
	It follows that $w_t \in L^2(I,H^{-1}(\Omega^{in}))$. The result for $\HH^{out}$ can be shown analogously.
\end{proof}

\begin{theorem}[Compactness of transfer operator] \label{compact_op_int}
	The transfer operator $P:\BB^{out}\rightarrow\HH^{in}$ introduced in \cref{trans_op_int} is compact.
\end{theorem}

\begin{proof}
	Let $(\tilde{w}_k)_{k\in\N}\subseteq \BB^{out}$ denote a sequence that is bounded with respect to $\lenergy \cdot \renergy_{out}$. Consequently, the sequence $(w_k)_{k\in\N}:= (H(\tilde{w}_k))_{k\in\N} \subseteq \HH^{out}$ is bounded in $L^2(I,H^1(\Omega^{out}))$. Then, there exists a subsequence $(w_{k_l})_{l\in\N}$ that converges weakly to a limit function $w \in L^2(I,H^1(\Omega^{out}))$. Next, we show that $w\in\HH^{out}$. The weak convergence yields $-\int_I (w(t),v)_{out}\,\varphi_t(t)\,dt + \int_I  (\alpha \nabla w (t), \nabla v)_{out}\,\varphi(t)\,dt = 0$ since we have $w_{k_l}\in\HH^{out}$ for every $l\in\N$. Exploiting arguments completely analogous to the proof of \cref{harmonic_prop_int}, we  obtain $w_t \in L^2(I,H^{-1}(\Omega^{out}))$, thus $w \in W^{1,2,2}(I,H^1(\Omega^{out}),$ $H^{-1}(\Omega^{out}))$\footnote{Recall that $W^{1,2,2}(I,H^1(\Omega^{out}),H^{-1}(\Omega^{out}))\mspace{-2mu}:=\mspace{-2mu} \left\lb u \mspace{-2mu}\in\mspace{-2mu} L^2(I,H^1(\Omega^{out}))\mspace{-2mu} \mid \mspace{-2mu}u_t \mspace{-2mu}\in\mspace{-2mu} L^2(I,H^{-1}(\Omega^{out})) \right\rb$.} and thanks to the embedding $W^{1,2,2}(I,H^1(\Omega^{out}),H^{-1}(\Omega^{out}))\hookrightarrow C^0(\bar{I},$ $L^2(\Omega^{out}))$ we have $w\in L^\infty(I,L^2(\Omega^{out}))$. It remains to show that $w$ satisfies homogeneous initial conditions. For $v\in H^1_0(\Omega^{out})$ and $\varphi\in C_0^\infty(I)$ we infer that 
	\begin{align*}
	\int_I \la w_t(t),v\ra_{out}\varphi(t) dt &= - \int_I \left(w(t),v\right)_{out}\varphi_t(t)dt\\
	&= \underset{l\rightarrow\infty}{\text{lim}} \hspace{-2pt}- \hspace{-2pt}\int_I \left(w_{k_l}(t),v\right)_{out}\varphi_t(t)dt = \underset{l\rightarrow\infty}{\text{lim}}\int_I \la \left(w_{k_l}\right)_t(t),v\ra_{out}\varphi(t)dt.
	\end{align*}
	Exploiting the density of $\lb v\varphi \mid v\in H^1_0(\Omega^{out}),\;\varphi\in C_0^\infty(I)\rb$ in $L^2(I,H^1_0(\Omega^{out}))$ thus yields that $(\left(w_{k_l}\right)_t)_{l\in\N}$ converges weakly-* to $w_t$ in $L^2(I,H^{-1}(\Omega^{out}))$. Choosing test functions $v\in H^1_0(\Omega^{out})$ and $\varphi\in C^\infty(I)$ satisfying $\varphi(T)=0$ we can conclude 
	\begin{align*}
	\left(w(0),v\right)_{out}\,\varphi(0) &= -\int_I \left(w(t),v\right)_{out}\,\varphi_t(t)\,dt - \int_I \la w_t(t),v\ra_{out}\,\varphi(t)\,dt\\
	&=\underset{l\rightarrow\infty}{\text{lim}} -\int_I \left(w_{k_l}(t),v\right)_{out}\,\varphi_t(t)\,dt - \int_I \la \left(w_{k_l}\right)_t(t),v\ra_{out}\,\varphi(t)\,dt\\
	&=\underset{l\rightarrow\infty}{\text{lim}} \left(w_{k_l}(0),v\right)_{out}\,\varphi(0) =\underset{l\rightarrow\infty}{\text{lim}} \left(0,v\right)_{out}\,\varphi(0) =\,0.
	\end{align*}
	As $\varphi(0)$ is arbitrary and $H^1_0(\Omega^{out})$ is dense in $L^2(\Omega^{out})$ it follows that $w(0)=0$ in $L^2(\Omega^{out})$. Hence, the limit $w$ is an element of $\HH^{out}$. 
	
	As $(w_{k_l})_{l\in\N}$ is bounded in $W^{1,2,2}(I,H^1(\Omega^{out}),H^{-1}(\Omega^{out}))$ thanks to \cref{harmonic_prop_int}, the compactness theorem of Aubin-Lions \cite[Corollary 5]{S86} then yields a subsequence $(w_{k_{l_m}})_{m\in\N}$ which converges strongly to $w$ in $L^2(I,L^2(\Omega^{out}))$ due to the uniqueness of weak limits. Considering the error sequence $(e_{k_{l_m}})_{m\in\N}:=( w_{k_{l_m}}-w)_{m\in\N}\subseteq \HH^{out}$, we thus have that $e_{k_{l_m}}   \xlongrightarrow{m \rightarrow \infty} 0$ in $L^2(I,L^2(\Omega^{out}))$. Since each $e_{k_{l_m}}$ is contained in $\HH^{out}$, the parabolic Caccioppoli inequality \cref{caccioppoli_int} yields $\Vert \alpha^\frac{1}{2}\nabla (e_{k_{l_m}}\vert_{I\times\Omega^{in}}) \Vert^2_{L^2(I\times\Omega^{in})}\leq \frac{8\,\alpha_1}{\delta^2}\Vert e_{k_{l_m}} \Vert^2_{L^2(I\times\Omega^{out})}\xlongrightarrow{m \rightarrow \infty} 0$ and we obtain $w_{k_{l_m}}\vert_{I\times\Omega^{in}}  \xlongrightarrow{m \rightarrow \infty} w\vert_{I\times\Omega^{in}}$ in $\HH^{in}$.
\end{proof}

\begin{proposition}[Parabolic Poincar\'e inequality]\label{par_poin_int}
	There exists a constant $c_{p}^{in}>0$ such that for all functions $w\in\HH^{in}$
	\begin{align}
	\Vert w \Vert_{L^2(I,L^2(\Omega^{in}))}\leq c_p^{in}\,\Vert \nabla w \Vert_{L^2(I,L^2(\Omega^{in}))}.
	\label{poincare_int}
	\end{align}
	An analogous result holds for functions in $\HH^{out}$.
\end{proposition}

Similar parabolic Poincar\'e inequalities can, for instance, be found in \cite[Theorem 2.2]{A16}, \cite[Lemma 2.5]{K08}, and \cite[Lemma 3]{S81}. However, the idea of the proof of inequality \cref{poincare_int} is guided by the proof of the (elliptic) Poincar\'e inequality (see \cite[Theorem 1, section 5.8]{E02}).

\begin{proof}
	We provide the proof for the case of a function $w\in\HH^{in}$, the case $w \in \HH^{out}$ follows analogously. First, we derive an auxiliary result which demonstrates that a function in $\HH^{in}$ that is constant in space for all time points is zero. In a second step the parabolic Poincar\'e inequality is proved by contradiction.
	
	Let $w \in \HH^{in}$ satisfy $\nabla w = 0$ almost everywhere in $\Omega^{in}$ for almost every $t \in I$. Since we have $w\in\HH^{in}$, it follows that $w_t=0$ in $L^2(I,H^{-1}(\Omega^{in}))$. Consequently, $w$ is constant in space and time. Thanks to the homogeneous initial values prescribed for functions in $\HH^{in}$, we obtain $w=0$ almost everywhere in $\Omega^{in}$ for almost every $t\in I$. 
	
	Subsequently, the inequality is proved by contradiction as follows:	Suppose the assertion of the lemma is false. Then for all $k\in\N$ we can find a function $w_k\in\HH^{in}$ satisfying $\Vert w_k \Vert_{L^2(I,L^2(\Omega^{in}))} > k\,\Vert \nabla w_k \Vert_{L^2(I,L^2(\Omega^{in}))}$. Without loss of generality we can moreover suppose that $	\Vert w_k \Vert_{L^2(I,L^2(\Omega^{in}))} = 1$ for all $k \in \N$.\footnote{Otherwise we consider the normalized sequence $\left(v_k\right)_{k\in\N} := \left(w_k / \Vert w_k\Vert_{L^2(I,L^2(\Omega^{in}))}\right)_{k\in\N} \subseteq \HH^{in}$.} We therefore have $\Vert \nabla w_k \Vert_{L^2(I,L^2(\Omega^{in}))} < \frac{1}{k}$ for all $k \in \N$. Hence, the sequence $(w_k)_{k\in \N}\subseteq \HH^{in}$ is bounded in $L^2(I,H^1(\Omega^{in}))$ and there exists a subsequence $(w_{k_l})_{l\in\N}$ and a limit $w \in L^2(I,H^1(\Omega^{in}))$ such that $w_{k_l} \longharpoonup w$ in $L^2(I,H^1(\Omega^{in}))$. 
	Following completely analogous arguments as employed in the proof of \cref{compact_op_int} we can exploit this weak convergence to infer that $w\in \HH^{in}$. Furthermore, the arguments employed in the proof of \cref{compact_op_int} additionally yield a subsequence $(w_{k_{l_m}})_{m\in\N}$ which converges strongly to $w$ in $L^2(I,L^2(\Omega^{in}))$. Moreover, the subsequence $(\nabla w_{k_{l_m}})_{m\in\N}$ of gradients converges strongly to $0$ in $L^2(I,L^2(\Omega^{in}))$ by construction. We can thus infer that $\nabla w = 0$ and $w_{k_{l_m}}  \xlongrightarrow{m \rightarrow \infty} w$ in $L^2(I,H^1(\Omega^{in}))$. Since $w\in\HH^{in}$ satisfies $\nabla w=0$ almost everywhere in $\Omega^{in}$ for almost every $t\in I$, the auxiliary result verified in the first step of this proof yields $w = 0$ almost everywhere in $\Omega^{in}$ for almost every $t\in I$. However, this is in contradiction to $\Vert w \Vert_{L^2(I,L^2(\Omega^{in}))} =  \underset{m\rightarrow\infty}{\text{lim}}\, \Vert w_{k_{l_m}} \Vert_{L^2(I,L^2(\Omega^{in}))} = 1$.
\end{proof}

\subsection{Proofs of \cref{boundary}}
\label{proofs_bound}

\begin{proposition}[Parabolic Caccioppoli inequality]\label{par_cacc}
	For a function $w\in \HH^{out}$ and thus $w\vert_{I\times\Omega^{in}}\in \HH^{in}$ the following estimate holds:
	\begin{align*}
	\Vert w\vert_{I\times\Omega^{in}} \Vert^2_{L^\infty(I,L^2(\Omega^{in}))}+\Vert \alpha^\frac{1}{2}\nabla (w\vert_{I\times\Omega^{in}}) \Vert^2_{L^2(I,L^2(\Omega^{in}))}\leq \frac{8\,\alpha_1}{\delta^2}\,\Vert w \Vert^2_{L^2(I,L^2(\Omega^{out}))}.
	\end{align*}
\end{proposition}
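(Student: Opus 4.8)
The plan is to transcribe the proof of \cref{par_cacc_int} essentially verbatim, changing only what the mixed boundary conditions on $\partial\Omega^{out}$ force us to change. Since $w\in\HH^{out}$ solves $(P^{out})$ with $f=u_0=g_D=g_N=0$, the fundamental lemma of the calculus of variations gives, for almost every $t\in(0,T)$,
\[
\la w_t(t),v\ra_{V^{out}_0} + (\alpha\nabla w(t),\nabla v)_{L^2(\Omega^{out})} = 0 \qquad\forall\,v\in V^{out}_0,
\]
where the existence of $w_t\in L^2(I,(V^{out}_0)^*)$ is obtained from the boundary analogue of \cref{harmonic_prop_int}, proved by the same Hahn--Banach/density argument. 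Note that the Neumann term drops out here precisely because $g_N=0$.

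Next I would fix $t^*\in(0,T]$ and a cut-off function $\eta\in C^1(\overline{\Omega^{out}})$ with $0\le\eta\le1$, $\eta\equiv1$ on $\Omega^{in}$, $\eta\equiv0$ in a neighbourhood (relative to $\overline{\Omega^{out}}$) of $\partial\Omega^{out}\cap\Omega$, and $|\nabla\eta|\le1/\delta$; such an $\eta$ exists because $\dist(\partial\Omega^{in}\cap\Omega,\partial\Omega^{out}\cap\Omega)>\delta$, e.g.\ built from a truncated, mollified distance to $\partial\Omega^{out}\cap\Omega$. The key point is that $\eta$ is \emph{not} required to vanish on $\partial\Omega^{out}\cap\partial\Omega$. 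Then $w(t)\eta\in V^{out}_0$ for a.e.\ $t$: on $\partial\Omega^{out}\cap\Omega$ the product vanishes because $\eta$ does, and on $\partial\Omega^{out}\cap\Sigma_D$ it vanishes because $w\in V^{out}$ already does; on $\partial\Omega^{out}\cap\Sigma_N$ no condition is imposed on $V^{out}_0$. As in the interior proof, $w\eta\in L^2((0,t^*),V^{out}_0)$ but $w$ itself is not, so I would approximate $w\eta$ by a sequence $(w_n)_{n\in\N}\subseteq C_0^\infty((0,t^*),V^{out}_0)$ converging to $w\eta$ in $L^2((0,t^*),V^{out}_0)$ (possible since $V^{out}_0$ is separable) and pass to the limit exactly as in the proof of \cref{par_cacc_int}.

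Testing the pointwise equation with $w_n(t)\eta$, integrating over $(0,t^*)$, using that $\eta$ does not depend on time to shift one factor $\eta$ onto $w_t$, and letting $n\to\infty$ gives
\[
\int_0^{t^*}\!\la w_t(t)\eta,w(t)\eta\ra_{V^{out}_0}dt + \int_0^{t^*}\!\!\int_{\Omega^{out}}\!\alpha|\nabla w|^2\eta^2 + 2\,\alpha\,w\,\eta\,\nabla w\cdot\nabla\eta\;dt = 0 .
\]
Partial integration in time together with $w(0)=0$ in $L^2(\Omega^{out})$ turns the first integral into $\tfrac12\Vert w(t^*)\eta\Vert_{L^2(\Omega^{out})}^2$, and then Cauchy--Schwarz and Young's inequality (absorbing the $\alpha|\nabla w|^2\eta^2$ term) yield $\Vert w(t^*)\eta\Vert_{L^2(\Omega^{out})}^2 + \int_0^{t^*}\!\int_{\Omega^{out}}\alpha|\nabla w|^2\eta^2 \le 4\int_0^{t^*}\!\int_{\Omega^{out}}\alpha\,w^2|\nabla\eta|^2$. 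Using $\eta\equiv1$ on $\Omega^{in}$, $|\nabla\eta|\le1/\delta$, $\alpha\le\alpha_1$, and extending the time integral to all of $I$, each term on the left is bounded by $\tfrac{4\alpha_1}{\delta^2}\Vert w\Vert_{L^2(I\times\Omega^{out})}^2$; taking the supremum over $t^*\in(0,T]$ in both estimates and adding them gives the asserted inequality with constant $\tfrac{8\alpha_1}{\delta^2}$.

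The only genuinely new ingredient compared with the interior case — and the step I expect to need the most care — is the construction of the cut-off together with the verification that $w\eta$ is an admissible test function in $V^{out}_0$: one must make sure the distance used to define $\eta$ is taken only to the interior part $\partial\Omega^{out}\cap\Omega$ (so that $\eta$ may remain equal to $1$ up to $\partial\Omega$ and the $L^\infty(L^2)$ bound on $\Omega^{in}$ is not lost), and that the time-regularizing sequence can be chosen inside $L^2((0,t^*),V^{out}_0)$ rather than $L^2((0,t^*),H^1_0(\Omega^{out}))$. Everything else is a verbatim copy of the proof of \cref{par_cacc_int}. The variant with $\partial\Omega^{out}\cap\partial\Omega\neq\emptyset$ and $\dist(\partial\Omega^{in},\partial\Omega^{out})>\delta$ is handled identically, now with a cut-off vanishing near all of $\partial\Omega^{out}$.
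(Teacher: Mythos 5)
Your proposal is correct and follows exactly the route the paper takes: the paper's own proof consists of repeating the interior argument with a cut-off that equals $1$ on $\Omega^{in}$ (and on $\partial\Omega^{in}\cap\partial\Omega$ in the first case), vanishes on $\partial\Omega^{out}\cap\Omega$ (respectively on all of $\partial\Omega^{out}$ in the second case), and satisfies $|\nabla\eta|\le 1/\delta$, which is precisely your construction. Your additional verification that $w\eta$ is admissible in $V^{out}_0$ (using $w=0$ on $\partial\Omega^{out}\cap\Sigma_D$ and no condition on $\Sigma_N$) is exactly the point the paper leaves implicit, so there is nothing to correct.
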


\begin{proof}
	The proof follows from analogous arguments as employed in the proof of the parabolic Caccioppoli inequality \cref{caccioppoli_int} (see \cref{proofs_int}), considering a cut-off function $\eta \in C^1(\Omega^{out})$ with the properties $0 \leq \eta \leq 1$, $\eta=1$ in $\Omega^{in}$, $\eta=0$ on $\partial\Omega^{out}\cap\Omega$, and $|\nabla \eta| \leq \frac{1}{\delta}$ in $\Omega^{out}$. Moreover, we choose $\eta=1$ on $\partial\Omega^{in}\cap\partial\Omega$ if $\partial\Omega^{in}\cap\partial\Omega^{out}\cap\partial\Omega \neq \emptyset$ and $\eta=0$ on $\partial\Omega^{out}\cap\partial\Omega$ else.
\end{proof}

\begin{proposition}[Parabolic Poincar\'e inequality]\label{par_poin}
	There exists a constant $c_{p}^{in}>0$ such that for all functions $w\in \HH^{in}$
	\begin{align*}
	\Vert w \Vert_{L^2(I,L^2(\Omega^{in}))}\leq c_p^{in}\,\Vert \nabla w \Vert_{L^2(I,L^2(\Omega^{in}))}.
	\end{align*}
	An analogous result holds for functions in $\HH^{out}$.
\end{proposition}

\begin{proof}
	The proof is analogous to the proof of the Poincar\'e inequality \cref{poincare_int}.
\end{proof}

\begin{theorem}[Compactness of transfer operator] \label{compact_op}
	The transfer operator $P:\BB^{out}\rightarrow\HH^{in}$ introduced in \cref{trans_op} is compact.
\end{theorem}

\begin{proof}
	The proof is analogous to the proof of \cref{compact_op_int}.
\end{proof}

\subsection{Proofs of \cref{error_bounds}}
\label{proofs_error_bounds}

\begin{proof}[Proof of \cref{a_priori} (Local a priori error bound)]
	First, we define a function in $\Lambda^{n,\text{data}}$ such that we can use Assumption \cref{proj_error_epsilon}, similar to the proof of Proposition SM5.2 in \cite{BS18}. Subsequently, we bound the remaining terms by the data (cf. the discussion in \cref{counter_example}). We provide the proof for subdomains located at the boundary of $\Omega$. For subdomains located in the interior the proof is slightly easier. We define the following function $ w^n \in \Lambda^{n,\text{data}}$:\footnote{To simplify the notation we omit the subscript $i$. For the definition of $u^{f}$ or $u^{b}$ see \cref{boundary}.}
	\begin{align}\label{local_best_appr}
	w^n := \sum_{i=1}^n a_i \chi_i + u^{f}\vert_{I\times\Omega^{in}} + u^{b}\vert_{I\times\Omega^{in}}.
	\end{align}
	Here, we assume without loss of generality that $u^{f}\vert_{I\times\Omega^{in}}$ and $\chi_i$ are orthogonal with respect to the $((\cdot\,,\cdot))_{in} $ inner product. The coefficients $a_1,\ldots,a_n\in\R$ will be identified below. Since $u\vert_{I\times\Omega^{out}}$ solves $(P^{out})$, $u\vert_{I\times\Omega^{out}}$ can be decomposed in the following way
	\begin{align}\label{decomposition}
	u\vert_{I\times\Omega^{out}} = u^{f} + u^{b}\vert_{I\times\Omega^{out}} + u^{out},
	\end{align}
	where $u^{out}\in\HH^{out}$. Consequently, for $u\vert_{I\times\Omega^{in}}$ we have that
	\begin{align*}
	u\vert_{I\times\Omega^{in}} = u^{f}\vert_{I\times\Omega^{in}} + u^{b}\vert_{I\times\Omega^{in}} + u^{in},
	\end{align*}
	where $u^{in}=P(u^{out}\vert_{I\times\partial\Omega^{out}})=u^{out}\vert_{I\times\Omega^{in}} \in \HH^{in}$. Therefore, we can conclude
	\begin{align*}
	\energy u\vert_{I\times\Omega^{in}} - w^n \energy_{in} = \energy  P(u^{out}\vert_{I\times \partial\Omega^{out}}) - \sum_{i=1}^n a_i \chi_i \energy_{in}.
	\end{align*}
	We then choose $\sum_{i=1}^n a_i \chi_i$ as the best approximation of $P(u^{out}\vert_{I\times \partial\Omega^{out}})$ in $\Lambda^n$ and thus define $a_i := ((u^{out}\vert_{I\times \partial\Omega^{out}},\varphi_i\vert_{I\times \partial\Omega^{out}}))_{out} $, where $\varphi_1,\ldots,\varphi_n$ are the eigenfunctions of the transfer eigenvalue problem. Assumption \cref{proj_error_epsilon} then yields
	\begin{align*}
	&\frac{\energy P(u^{out}\vert_{I\times \partial\Omega^{out}}) - \sum_{i=1}^n ((u^{out}\vert_{I\times \partial\Omega^{out}},\varphi_i\vert_{I\times \partial\Omega^{out}}))_{out} \,\chi_i \energy_{in}}{\energy u^{out}\vert_{I\times \partial\Omega^{out}} \energy_{out}} \leq  \varepsilon.
	\end{align*}
	Thus, we can so far conclude that
	\begin{align*}
	\Vert \alpha^\frac{1}{2} \nabla ( u\vert_{I\times\Omega^{in}} - w^n ) \Vert_{L^2(I,L^2(\Omega^{in}))} &\leq \varepsilon\,\Vert \alpha^\frac{1}{2} \nabla H (u^{out}\vert_{I\times \partial\Omega^{out}} )\Vert_{L^2(I,L^2(\Omega^{out}))}\\
	& = \varepsilon\,\Vert \alpha^\frac{1}{2} \nabla u^{out}\Vert_{L^2(I,L^2(\Omega^{out}))}.
	\end{align*}
	Thanks to \cref{decomposition}, it holds that $\Vert \alpha^\frac{1}{2} \nabla u^{out}\Vert_{L^2(I\times\Omega^{out})}\leq \Vert \alpha^\frac{1}{2} \nabla u\vert_{I\times\Omega^{out}} \Vert_{L^2(I\times\Omega^{out})} + \Vert \alpha^\frac{1}{2} \nabla u^{f} \Vert_{L^2(I\times\Omega^{out})}+\Vert \alpha^\frac{1}{2} \nabla u^{b}\vert_{I\times\Omega^{out}} \Vert_{L^2(I\times\Omega^{out})}$. Using the density of $\lb v\varphi \mid v\in V^{out}_0,$ $\varphi\in C_0^\infty(I)\rb$ in $L^2(I,V^{out}_0)$\footnote{Note that $V^{out}_0:= \lb w \in H^1(\Omega^{out}) \mid w = 0 \text{ on }\partial\Omega^{out} \cap (\Omega \cup \Sigma_D)\rb$.}, the Cauchy-Schwarz and Young's inequality, we obtain
	\begin{align*}
	&\int_I \langle u^{f}_t(t), u^{f}(t)\rangle_{V_0^{out}} \,dt + \Vert \alpha^\frac{1}{2} \nabla u^{f} \Vert_{L^2(I\times\Omega^{out})}^2\\
	=& \int_I \big( ( f(t), u^{f}(t))_{L^2(\Omega^{out})} -  ( u^{b}_t(t), u^{f}(t))_{L^2(\Omega^{out})} -  (\alpha \nabla u^{b}(t),\nabla u^{f}(t))_{L^2(\Omega^{out})}\big) \,dt \\
	\leq& \frac{1}{2} ( c_f \Vert f \Vert_{L^2(I\times\Omega^{out})}\mspace{-5mu} + \mspace{-4mu}c_f \Vert u^b_t \Vert_{L^2(I\times\Omega^{out})}\mspace{-5mu}+\mspace{-4mu}\Vert \alpha^\frac{1}{2} \nabla u^{b} \Vert_{L^2(I\times\Omega^{out})} )^2 \mspace{-4mu}+\mspace{-4mu} \frac{1}{2} \Vert \alpha^\frac{1}{2} \nabla u^{f} \Vert_{L^2(I\times\Omega^{out})}^2,
	\end{align*}
	where the constant $c_{f}$ is given by $c_{f}:=\Vert u^{f} \Vert_{L^2(I\times\Omega^{out})}/ \Vert \alpha^\frac{1}{2} \nabla u^{f} \Vert_{L^2(I\times\Omega^{out})}$. Since we have $\int_I \langle u^{f}_t(t), u^{f}(t)\rangle_{V_0^{out}} dt = \frac{1}{2} ( \Vert u^{f}(T) \Vert_{L^2(\Omega^{out})}^2 - \Vert u^{f}(0) \Vert_{L^2(\Omega^{out})}^2 ) \geq - \frac{1}{2} \Vert u_0 \Vert_{L^2(\Omega^{out})}^2$, we can conclude that $\Vert \alpha^\frac{1}{2} \nabla u^{out} \Vert_{L^2(I\times\Omega^{out})} \leq \max\lb 2,c_f\rb (\Vert \alpha^\frac{1}{2} \nabla u\vert_{I\times\Omega^{out}} \Vert_{L^2(I\times\Omega^{out})}+ \Vert f \Vert_{L^2(I\times\Omega^{out})} +\Vert u_0 \Vert_{L^2(\Omega^{out})}+\Vert u^b_t \Vert_{L^2(I\times\Omega^{out})}+\Vert \alpha^\frac{1}{2} \nabla u^{b} \Vert_{L^2(I\times\Omega^{out})} )$.
\end{proof}

\begin{proof}[Proof of \cref{global_error} (Global GFEM error bound)]
	First, we use a (Petrov-)Galer-kin orthogonality of the global approximation error and the reduced test space to bound the time derivative of the global error in the $L^2((V_\mathrm{GFEM})^*) $-norm in terms of the global error in the $L^2(H^1)$-seminorm. Subsequently, we introduce a function in $X_\mathrm{GFEM}$ that is adapted for exploiting the local a priori error bound \cref{a_priori_epsilon}. To finally make use of the latter, we exploit that functions in $X_\mathrm{GFEM}$ solve the PDE locally.
	Since $\lb v\varphi \mid v \in V, \,\varphi \in C^\infty_0(I)\rb $ is dense in $L^2(I,V)$, we note that for any $v \in V_\mathrm{GFEM}\subseteq L^2(I,V)$\footnote{Recall that $V:=\lb w \in H^1(\Omega) \mid w = 0 \text{ on }\Sigma_D\rb $ with $\Vert \cdot \Vert_V := \Vert \alpha^\frac{1}{2} \cdot \Vert_{L^2(\Omega)} +\Vert \alpha^\frac{1}{2}\nabla \cdot \Vert_{L^2(\Omega)}$.}
	\begin{align}\label{PG_ortho}
	\begin{split}
	\la (u - u_\mathrm{GFEM})_t, v \ra_{L^2(I,V)} &= - (\alpha \nabla(u-u_\mathrm{GFEM}),\nabla v)_{L^2(I,L^2(\Omega))}\\
	&\leq \Vert \alpha^\frac{1}{2} \nabla (u - u_\mathrm{GFEM}) \Vert_{L^2(I,L^2(\Omega))}\, \Vert  v \Vert_{L^2(I,V)}.
	\end{split}
	\end{align}
	Therefore, we obtain $\Vert (u - u_\mathrm{GFEM})_t \Vert_{L^2(I,(V_\mathrm{GFEM})^*)} \leq \Vert \alpha^\frac{1}{2} \nabla (u - u_\mathrm{GFEM}) \Vert_{L^2(I,L^2(\Omega))}$. By using the triangle inequality we have that for any $w \in X_\mathrm{GFEM} $
	\begin{align*}
	&\sqrt{\Vert (u - u_\mathrm{GFEM})_t \Vert_{L^2(I,(V_\mathrm{GFEM})^*)}^2 + \Vert \alpha^\frac{1}{2} \nabla (u - u_\mathrm{GFEM}) \Vert_{L^2(I,L^2(\Omega))}^2 }\\ \leq&\sqrt{2}  \, \Vert \alpha^\frac{1}{2} \nabla (u - u_\mathrm{GFEM}) \Vert_{L^2(I,L^2(\Omega))}\\
	\leq& \sqrt{2}  \, \big(\Vert \alpha^\frac{1}{2} \nabla (u - w) \Vert_{L^2(I,L^2(\Omega))} +  \Vert \alpha^\frac{1}{2} \nabla (u_\mathrm{GFEM} - w) \Vert_{L^2(I,L^2(\Omega))}\big).
	\end{align*}
	Assumption \cref{inf_sup} regarding inf-sup-stability and \cref{PG_ortho} yield
	\begin{align}
	&\Vert \alpha^\frac{1}{2} \nabla (u_\mathrm{GFEM} - w) \Vert_{L^2(I,L^2(\Omega))}\nonumber\\
	\leq \;&  \frac{1}{\beta} \sup_{v \in V_\mathrm{GFEM}} \frac{\la (u_\mathrm{GFEM} - w)_t, v \ra_{L^2(I,V)} + (\alpha \nabla(u_\mathrm{GFEM} - w),\nabla v)_{L^2(I,L^2(\Omega))}  }{\Vert  v \Vert_{L^2(I,V)} }\nonumber\\
	= \;& \frac{1}{\beta}  \sup_{v \in V_\mathrm{GFEM}} \frac{\la (u - w)_t, v \ra_{L^2(I,V)} + (\alpha \nabla(u - w),\nabla v)_{L^2(I,L^2(\Omega))}  }{\Vert v \Vert_{L^2(I,V)} }.\label{inf_sup_frac}
	\end{align}
	In the following we choose $w:=\sum_{i=1}^M \psi_i\,w_i^n \in X_\mathrm{GFEM}$ with local approximations $w_i^n \in \Lambda_i^{n,\text{data}}$ as introduced in the proof of \cref{a_priori} (see \cref{local_best_appr}). To be able to exploit the local a priori error bound \cref{a_priori_epsilon}, we bound the first term in \cref{inf_sup_frac} by the sum of local $L^2(H^1)$-seminorms of $u-w$. To employ that $u-w$ solves the PDE locally, we approximate $v \in V_\mathrm{GFEM} \subseteq L^2(I,V)$ by a sequence $(v_k)_{k\in\N}$ with $v_k := \varphi_k \tilde{v}_k$, $\varphi_k \in C_0^\infty(I)$, and $\tilde{v}_k \in V$, such that $v_k \xlongrightarrow{k \rightarrow \infty} v$ in $L^2(I,V)$. It follows that
	\begin{align*}
	\la (u\mspace{-3mu} -\mspace{-3mu} w)_t, v \ra_{L^2(I,V)}
	\mspace{-4mu}=\mspace{-4mu} \lim_{k\rightarrow\infty} \,  \la (u \mspace{-3mu}-\mspace{-3mu} w)_t, v_k \ra_{L^2(I,V)}\mspace{-4mu} = \mspace{-4mu}\lim_{k\rightarrow\infty} \sum_{i=1}^M \la  \psi_i ( u\vert_{I\times\Omega_i^{in}} \mspace{-3mu}- \mspace{-3mu}w_i^n)_t, v_k \ra_{L^2(I,V)}. 
	\end{align*}
	Since $u\vert_{I\times\Omega_i^{in}} - w_i^n \in \HH_i^{in}$ (cf. \cref{local_best_appr} in the proof of \cref{a_priori}) and $\psi_i \in C^1(\Omega_i^{in})$ does not depend on time, we obtain
	\begin{align*}
	&\la  \psi_i ( u\vert_{I\times\Omega_i^{in}} - w_i^n)_t, v_k  \ra_{L^2(I,V)}\\ 
	=& - \,(  \psi_i ( u\vert_{I\times\Omega_i^{in}} - w_i^n), (v_k)_t )_{L^2(I\times\Omega)}
	=  - \,(  u\vert_{I\times\Omega_i^{in}} - w_i^n , (\psi_i\,  v_k)_t )_{L^2(I\times\Omega_i^{in})}\\
	=& - \,( \alpha \nabla( u\vert_{I\times\Omega_i^{in}} - w_i^n) , \nabla (\psi_i\,  v_k) )_{L^2(I,L^2(\Omega_i^{in}))}\\
	=& - \,( \alpha \nabla( u\vert_{I\times\Omega_i^{in}} - w_i^n) , \nabla (\psi_i\,  v_k) )_{L^2(I,L^2(\Omega))}\\
	\xlongrightarrow{k \rightarrow \infty}& - \,( \alpha \nabla( u\vert_{I\times\Omega_i^{in}} - w_i^n) , \nabla (\psi_i\,  v) )_{L^2(I,L^2(\Omega))}\\
	\leq&\, \Vert \alpha^\frac{1}{2} \nabla ( u\vert_{I\times\Omega_i^{in}} - w_i^n) \Vert_{L^2(I,L^2(\Omega^{in}_i))}  \Vert \alpha^\frac{1}{2} \nabla (\psi_i  v) \Vert_{L^2(I,L^2(\Omega))}.
	\end{align*}
	Furthermore, the properties of the partition of unity \cref{properties_pou} yield
	\begin{align*}
	\Vert \alpha^\frac{1}{2} \nabla (\psi_i  v) \Vert_{L^2(I,L^2(\Omega))}
	&\leq  \Vert \alpha^\frac{1}{2}\nabla \psi_i\,  v \Vert_{L^2(I,L^2(\Omega))} +   \Vert \alpha^\frac{1}{2} \psi_i \nabla v \Vert_{L^2(I,L^2(\Omega))} \\
	&\leq  c_2 /\diam(\Omega_i^{in}) \Vert \alpha^\frac{1}{2}  v \Vert_{L^2(I,L^2(\Omega))} +  c_1 \Vert \alpha^\frac{1}{2} \nabla v \Vert_{L^2(I,L^2(\Omega))} \\
	&= (c_1 + c_2 / \diam(\Omega_i^{in}) )  \Vert v \Vert_{L^2(I,V)}.
	\end{align*}
	Consequently, we may infer that
	\begin{align*}
	&\la (u - w)_t, v \ra_{L^2(I,V)} \\
	\leq\;&\sum_{i=1}^M \,(c_1 + c_2 / \diam(\Omega_i^{in}) )  \Vert \alpha^\frac{1}{2} \nabla ( u\vert_{I\times\Omega_i^{in}} - w_i^n) \Vert_{L^2(I,L^2(\Omega^{in}_i))}  \Vert v \Vert_{L^2(I,V)}.
	\end{align*}
	As an intermediate result we therefore obtain
	\begin{align*}
	&\sqrt{\Vert (u - u_\mathrm{GFEM})_t \Vert_{L^2(I,(V_\mathrm{GFEM})^*)}^2 + \Vert \alpha^\frac{1}{2} \nabla (u - u_\mathrm{GFEM}) \Vert_{L^2(I,L^2(\Omega))}^2 }\\ 
	\leq\;& \sqrt{2} \;\Big[ (1+ 1 /\beta )\, \Vert \alpha^\frac{1}{2} \nabla (u - w) \Vert_{L^2(I,L^2(\Omega))} \\
	& \hspace{2cm} + 1/ \beta \sum_{i=1}^M (c_1 + c_2 / \diam(\Omega_i^{in}) ) \, \Vert \alpha^\frac{1}{2} \nabla ( u\vert_{I\times\Omega_i^{in}} - w_i^n) \Vert_{L^2(I,L^2(\Omega^{in}_i))}  \Big].
	\end{align*}
	Exploiting the definition of $w$, Young's inequality, the Cauchy-Schwarz inequality, the overlap condition, and the properties of the partition of unity, it follows that
	\begin{align*}
	&\Vert \alpha^\frac{1}{2} \nabla( u - w) \Vert_{L^2(I,L^2(\Omega))}^2
	= \Vert  \alpha^\frac{1}{2} \nabla \Big(\sum_{i=1}^M \psi_i (u\vert_{I\times \Omega^{in}_i}-w_i^n)\Big) \Vert_{L^2(I,L^2(\Omega))}^2\\
	=\;&\Vert \sum_{i=1}^M  \alpha^\frac{1}{2} (\nabla\psi_i (u\vert_{I\times \Omega^{in}_i}-w_i^n) + \psi_i \nabla(u\vert_{I\times \Omega^{in}_i}-w_i^n))\Vert_{L^2(I,L^2(\Omega))}^2\\
	\leq \; & 2 \,\Vert \sum_{i=1}^M  \alpha^\frac{1}{2} \nabla\psi_i (u\vert_{I\times \Omega^{in}_i}-w_i^n) \Vert_{L^2(I\times\Omega)}^2 + 2 \,\Vert \sum_{i=1}^M  \alpha^\frac{1}{2}  \psi_i \nabla(u\vert_{I\times \Omega^{in}_i}-w_i^n)\Vert_{L^2(I\times\Omega)}^2\\
	\leq \; & 2 M^{in} \sum_{i=1}^M \big(\Vert  \alpha^\frac{1}{2}  \nabla\psi_i (u\vert_{I\times \Omega^{in}_i} - w_i^n)\Vert_{L^2(I\times\Omega^{in}_i)}^2 + \Vert  \alpha^\frac{1}{2}  \psi_i \nabla(u\vert_{I\times \Omega^{in}_i} - w_i^n)\Vert_{L^2(I\times\Omega^{in}_i)}^2\big)\\
	\leq \; & 2 M^{in} \sum_{i=1}^M \big( (c_2 / \diam(\Omega^{in}_i))^2 \Vert  \alpha^\frac{1}{2} ( u - w_i^n)\Vert_{L^2(I\times\Omega^{in}_i)}^2+  c_1^2\,\Vert  \alpha^\frac{1}{2} \nabla(u - w_i^n)\Vert_{L^2(I\times\Omega^{in}_i)}^2 \big).
	\end{align*}
	Since $u\vert_{I\times \Omega^{in}_i}-w_i^n \in \HH_i^{in}$, we infer analogously to the proof of the parabolic Poincar\'e inequality (\cref{par_poin_int,par_poin}) that there exists a constant $c_{p,i}^{\alpha}>0$ such that
	\begin{align*}
	\Vert \alpha^\frac{1}{2}( u\vert_{I\times \Omega^{in}_i} - w_i^n) \Vert_{L^2(I,L^2(\Omega^{in}_i))} \leq c_{p,i}^{\alpha}\, \Vert \alpha^\frac{1}{2} \nabla(u\vert_{I\times \Omega^{out}_i} - w_i^n)\Vert_{L^2(I,L^2(\Omega^{in}_i))}.
	\end{align*}
	We can finally employ Assumption \cref{a_priori_epsilon} and obtain
	\begin{align*}
	&\frac{\Vert \alpha^\frac{1}{2} \nabla(u\vert_{I\times \Omega^{in}_i} - w_i^n)\Vert_{L^2(I\times\Omega^{in}_i)}^2}{ \Vert \alpha^\frac{1}{2} \nabla u\vert_{I\times \Omega^{out}_i}\Vert_{L^2(I\times\Omega^{out}_i)}^2  +   \Vert f \Vert_{L^2(I\times\Omega^{out}_i)}^2 + \Vert u_0 \Vert_{L^2(\Omega^{out}_i)}^2 + \star_i^2}
	\leq 5\, \max\lb 2, c_{f,i}\rb^2\, \varepsilon_i^2,
	\end{align*}
	where the constant $c_{f,i}$ is defined as $c_{f,i}:=\Vert u^{f}_i \Vert_{L^2(I\times\Omega_i^{out})}/ \Vert \alpha^\frac{1}{2} \nabla u^{f}_i \Vert_{L^2(I\times\Omega_i^{out})}$ and $\star_i$ is given by $\star_i \mspace{-1mu}= \mspace{-1mu}\Vert u^b_t \Vert_{L^2(I\times\Omega_i^{out})} \mspace{-1mu}+\mspace{-1mu}\Vert \alpha^\frac{1}{2} \nabla u^{b} \Vert_{L^2(I\times\Omega_i^{out})}$ if $\partial\Omega_i^{out}\cap\partial\Omega\neq\emptyset$ and $\star_i = 0$ else. Exploiting the overlap condition \cref{overlap_cond} it follows that
	\begin{align*}
	&\sum_{i=1}^M  \Vert \alpha^\frac{1}{2} \nabla u\vert_{I\times \Omega^{out}_i}\Vert_{L^2(I\times\Omega^{out}_i)}^2  + \Vert f \Vert_{L^2(I\times\Omega^{out}_i)}^2 + \Vert u_0 \Vert_{L^2(\Omega^{out}_i)}^2 +\star_i^2 \\
	&\leq M^{out} \big(\Vert \alpha^\frac{1}{2}\nabla u \Vert_{L^2(I\times\Omega)}^2 \mspace{-3mu}+\mspace{-3mu} \Vert f \Vert_{L^2(I\times\Omega)}^2\mspace{-3mu} +\mspace{-3mu} \Vert u_0 \Vert_{L^2(\Omega)}^2\mspace{-3mu} +\mspace{-3mu} \Vert u^b_t \Vert_{L^2(I\times \Omega)}^2 \mspace{-3mu}+\mspace{-3mu}\Vert \alpha^\frac{1}{2} \nabla u^{b} \Vert_{L^2(I\times \Omega)}^2 \big).
	\end{align*}
	Finally, we have
	\begin{align*}
	&\sqrt{\Vert (u - u_\mathrm{GFEM})_t \Vert_{L^2(I,(V_\mathrm{GFEM})^*)}^2 + \Vert \alpha^\frac{1}{2} \nabla (u - u_\mathrm{GFEM}) \Vert_{L^2(I,L^2(\Omega))}^2 }\\ 
	\leq \;& \sqrt{10\, M^{out}} \, \max_{i=1,\ldots,M} \big\lbrace C_i(c_1,c_2,\beta,M^{in},\Omega_i^{in},c_{p,i}^{\alpha})\, \max\lb 2, c_{f,i}\rb \,\varepsilon_i \big \rbrace\\
	&\Big(\Vert \alpha^\frac{1}{2}\nabla u \Vert_{L^2(I\times\Omega)} +  \Vert f \Vert_{L^2(I\times\Omega)} + \Vert u_0 \Vert_{L^2(\Omega)} + \Vert u^b_t \Vert_{L^2(I\times \Omega)} +\Vert \alpha^\frac{1}{2} \nabla u^{b} \Vert_{L^2(I\times \Omega)}\Big),
	\end{align*}
	where the constant $C_i(c_1,c_2,\beta,M^{in},\Omega_i^{in},c_{p,i}^{\alpha})$ is given by $C_i(c_1,c_2,\beta,M^{in},\Omega_i^{in},c_{p,i}^{\alpha})\mspace{-2mu}:= \mspace{-2mu}\max \big\lb (1+1/\beta) \sqrt{2 M^{in} \big(c_1^2+ (c_2 c_{p,i}^{\alpha} / \diam(\Omega^{in}_i))^2\big)},$ $\big(c_1 + c_2 / \diam(\Omega_i^{in})\big) /\beta \big\rb$.
\end{proof}

\section*{Acknowledgments}{The authors would like to thank the anonymous referees for their constructive comments, which helped to improve the presentation of the paper.}

\bibliographystyle{siamplain}

\end{document}